\documentclass[10pt,a4paper, final]{article}

\usepackage[a4paper, left=31mm, right=31mm]{geometry}
\usepackage[T1]{fontenc}							
\usepackage[utf8]{inputenc}							
\usepackage{amsmath, amsfonts, amssymb, amsthm, empheq}		
\usepackage{graphicx}								
\usepackage{microtype}								
\usepackage{hyperref}								
\usepackage{cleveref}								
\usepackage{booktabs}								
\usepackage{todonotes}
\usepackage{float}
\usepackage{caption}
\usepackage{subcaption}						
\usepackage[notref,notcite]{showkeys}				
\usepackage{siunitx}
\usepackage{array} 
\usepackage{amssymb}
\usepackage{amsmath, amsfonts, amssymb, amsthm}
\usepackage{caption}
\usepackage{subcaption}
\usepackage{xcolor}
\usepackage{siunitx}
\usepackage[T1]{fontenc}

\newcommand{\reviewerOne}[1]{\textcolor{black}{#1}}
\newcommand{\reviewerTwo}[1]{\textcolor{black}{#1}}


\def\electrons{\text{n}}
\def\holes{\text{p}}

\def\ions{\text{a}}

\newcommand*{\QEDB}{\null\nobreak\hfill\ensuremath{\square}}%

\newcommand{\overbar}[1]{\mkern 1.5mu\overline{\mkern-1.5mu#1\mkern-1.5mu}\mkern 1.5mu}

\theoremstyle{definition}								
\newtheorem{theorem}{Theorem}
\newtheorem{lemma}[theorem]{Lemma}					

\newtheorem{remark}[theorem]{Remark}

\numberwithin{equation}{section}
\numberwithin{figure}{section}
\numberwithin{theorem}{section}

\renewcommand{\appendix}{\par
  \setcounter{section}{0}
  \setcounter{subsection}{0}
  \gdef\thesection{\Alph{section}}
}

\newcommand*\samethanks[1][\value{footnote}]{\footnotemark[#1]} 

\title{Numerical analysis and simulation of lateral memristive devices: Schottky, ohmic, and multi-dimensional electrode models}

\author{
    Dilara Abdel\thanks{Weierstrass Institute for Applied Analysis and Stochastics (WIAS), Mohrenstr. 39, 10117 Berlin, Germany}
    \and Maxime Herda\thanks{Univ. Lille, CNRS, Inria, UMR 8524 - Laboratoire Paul Painlevé, F-59000 Lille, France}
    \and Martin Ziegler\thanks{Christian-Albrechts-Universität zu Kiel, Christian-Albrechts-Platz 4, 24118 Kiel, Germany}
    \and Claire Chainais-Hillairet\samethanks[2]
    \and Benjamin Spetzler\samethanks[3]
    \and Patricio Farrell\samethanks[1]
}

\date{\today}

\begin{document}

    \maketitle
    \begin{abstract}
        In this paper, we present the numerical analysis and simulations of a multi-dimensional memristive device model.
        Memristive devices and memtransistors based on two-dimensional (2D) materials have demonstrated promising potential for neuromorphic computing and next-generation memory technologies.
        Our charge transport model describes the drift-diffusion of electrons, holes, and ionic defects self-consistently in an electric field. We incorporate two types of boundary models: ohmic and Schottky contacts. The coupled drift-diffusion partial differential equations are discretized using a physics-preserving Voronoi finite volume method. It relies on an implicit time-stepping scheme and the excess chemical potential flux approximation. We demonstrate that the fully discrete nonlinear scheme is unconditionally stable, preserving the free-energy structure of the continuous system and ensuring the non-negativity of carrier densities. Novel discrete entropy-dissipation inequalities for both boundary condition types in multiple dimensions allow us to prove the existence of discrete solutions. We perform multi-dimensional simulations to understand the impact of electrode configurations and device geometries, focusing on the hysteresis behavior in lateral 2D memristive devices. Three electrode configurations -- side, top, and mixed contacts -- are compared numerically for different geometries and boundary conditions. These simulations reveal the conditions under which a simplified one-dimensional electrode geometry can well represent the three electrode configurations. This work lays the foundations for developing accurate, efficient simulation tools for 2D memristive devices and memtransistors, offering tools and guidelines for their design and optimization in future applications.
    \end{abstract}

\section{Introduction}

With the increasing importance of artificial intelligence (AI) in today's technological landscape, there is a need for hardware that can efficiently process vast amounts of data while minimizing energy consumption and latency \cite{Jones.2018, Lin.2021, Bughin.2018}.
In particular, as AI models grow in complexity, conventional computing architectures struggle to meet the demands of memory and energy efficiency \cite{Rao.2023, Xu.2018}.
A promising solution lies in developing memristive AI accelerators, which implement artificial neural networks (ANNs) directly in hardware using memristive devices \cite{Ziegler.2018, Sung.2018, Choi.2020}.
Memristive devices are microelectronic devices with a pinched hysteresis in their current-voltage (I-V) characteristics as illustrated in Fig.~\ref{fig:intro-TMDC}a,b.
This feature allows them to store multilevel, nonvolatile memory states and offer a low-power, scalable alternative to traditional components \cite{ Zidan.2018, Song.2023}.
When assembled in crossbar arrays, memristive devices enable matrix-vector multiplication, a key operation in ANNs, to be carried out in a single step \cite{Prezioso.2015, Huang.2024, Choi.2020, Boybat.2018}.
This concept permits computing directly in memory and can dramatically reduce energy consumption and latency by avoiding the costly data shuttling between memory and processing units inherent to conventional von Neumann computing architectures \cite{Gebregiorgis.2023, Li.2023b}.
Recent advances in the development of memristive AI accelerators have demonstrated their potential in various proof-of-concept applications, from image classification to speech recognition and other complex AI tasks, achieving near-software-level accuracy while maintaining low energy consumption; see, e.g., \cite{Huang.2024, Lanza.2022, Xia.2019, Aguirre.2024}.

\begin{figure}[ht]
    \centering
   \hspace*{-1.2cm} \includegraphics[width=1.2\textwidth]{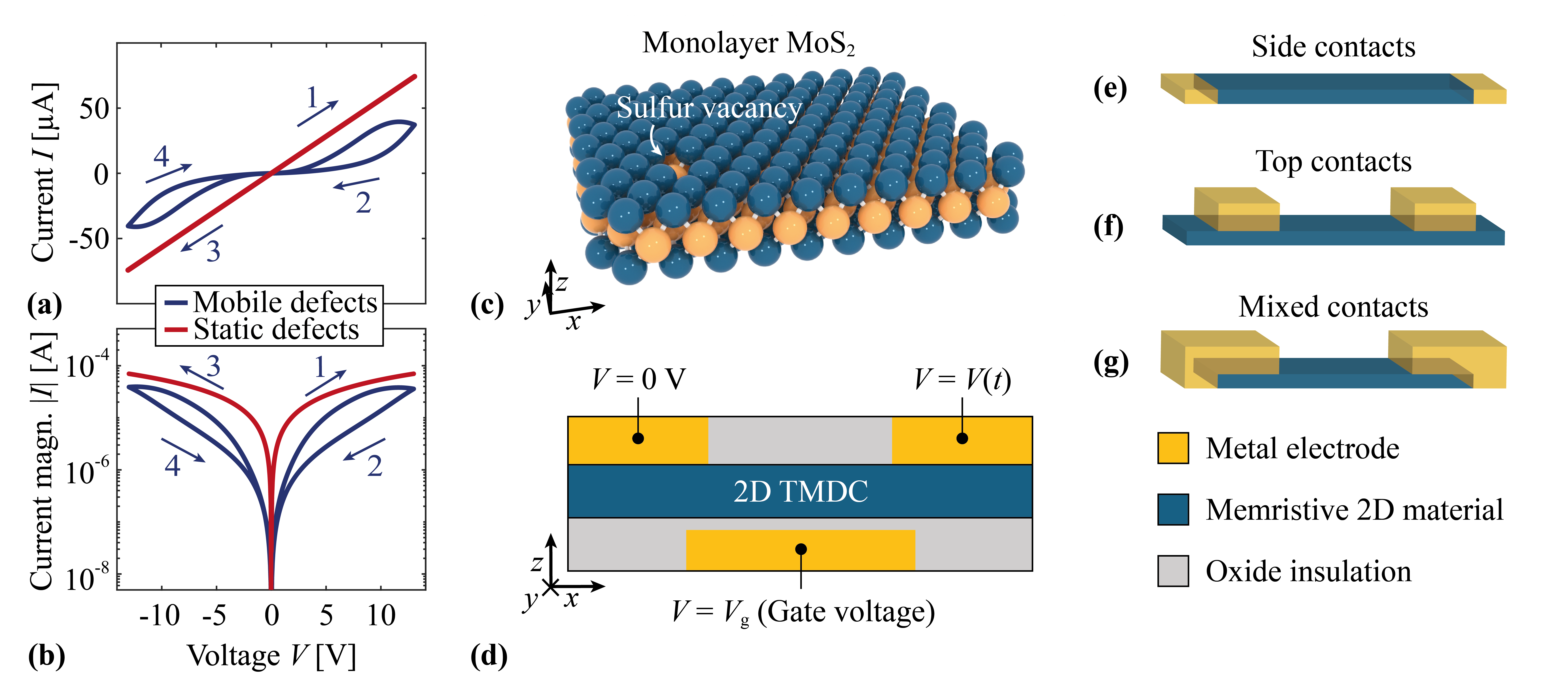}
    \caption{
        \textbf{(a)} Simulated I-V curve of an MoS$_2$ memristive device showing a pinched hysteresis if mobile ionic defects are present (darkblue) and no hysteresis if the defects are immobile (red).
        \textbf{(b)} Typical representation of the curve in (a) on a semilogarithmic scale.
        \textbf{(c)} Atomic structure of 2D MoS$_\mathrm{2}$ (monolayer), as an example for a TMDC widely used for lateral memristive devices and memtransistors with indicated sulfur vacancy.
        \textbf{(d)} Cross sectional illustration of a three terminal memtransistor based on 2D MoS$_\mathrm{2}$, with two top electrodes and one bottom gate electrode.
        In contrast to the top electrodes, the gate electrode is electrically insulated from the 2D TMDC in the center.
        \textbf{(e-g)} Illustration of three different electrode configurations investigated in this work that are used in lateral memristive devices and memtransistors: side contacts, top contacts, and mixed contacts.
        }
    \label{fig:intro-TMDC}
\end{figure}

In parallel with these advances, the exploration of 2D materials has significantly extended the scope of memristive device technologies \cite{Song.2023, Berggren.2021}.
These materials consist of atomically thin crystalline layers (Fig.~\ref{fig:intro-TMDC}c), which results in various beneficial features such as tunable electronic behavior, atomically sharp interfaces, reduced electrostatic screening, and ultimate scalability down to atomic dimensions \cite{Berggren.2021, Yao.2019, Han.2011}.
Furthermore, 2D materials have been investigated for multi-terminal memristive devices, often called memtransistors, because they unify memristive properties with the gate tunability of field-effect transistors, see, e.g., \cite{Sangwan.2018c, Feng.2021, Leng.2023, Ding.2021, Wali.2023}.
Memtransistors employ lateral geometries with additional gate electrodes (terminals) as illustrated in Fig.~\ref{fig:intro-TMDC}d-g.
The gate electrodes extend the device functionality by permitting electrical tuning of device characteristics such as linearity, symmetry, and learning rates by applying voltages to the gates \cite{Sangwan.2018c, Ding.2021, Lee.2020b, Yan.2022, Rodder.2020}.
While this technology is still at an early stage, recent empirical work on memtransistors based on 2D materials has achieved remarkable improvements in performance metrics, such as ultra-low switching energies of 20 fJ/bit, low switching voltages $<1$~V, and reduced sneak path currents in array architectures \cite{Feng.2021, Lee.2020b, Rahimifard.2022, Liu.2024}.

Further advances in memtransistor performance can be expected by developing a detailed understanding of the physical mechanisms of memristive hysteresis \cite{Song.2023}.
A critical challenge is to develop a model that captures all essential aspects of charge transport to identify the influence of geometry and material parameters on the I-V curve.
For example, most 2D memristive devices and memtransistors are based on transition metal dichalcogenides (TMDCs) as a memristive material \cite{Su.2021, Wang.2022b, Wang.2020c, Sangwan.2018, Geim.2013}.
For many devices, the migration of mobile atomistic defects was suggested to be the origin of memristive hysteresis \cite{Sangwan.2018c, Jadwiszczak.2019, DaLi.2018, Ge.2018, Sangwan.2015}.
In 2D MoS$\mathrm{_2}$ (a widely studied TMDC), such defects are thought to be charged sulfur vacancies, i.e., missing sulfur atoms in the lattice structure (see Fig.~\ref{fig:intro-TMDC}c) \cite{Sangwan.2018c, Jadwiszczak.2019, DaLi.2018, Sangwan.2015}.
The coupled dynamics of defects, electrons and holes complicates the charge transport processes compared to other semiconductor devices \cite{Thakkar.2024}.
Because the field effect from the gate electrodes further adds complexity, the first step is to develop a computational model with realistic memtransistor and electrode geometries but without the gate electrode.
In fact, previous theoretical studies have explained the pinched hysteresis in such devices by the accumulation and depletion of ionic defects at the electrodes \cite{Sivan.2022, Spetzler.2024}.
However, the large variety of electrode and device structures used for memtransistors combined with the complex microscopic processes lead to a lack of insights into these devices' charge transport and hysteresis.

The few computational models presented for memtransistors and lateral memristive devices based on 2D materials include compact models \cite{Sangwan.2018c, Spetzler.2022, Zhou.2023b}, kinetic Monte Carlo models \cite{Aldana.2023b, Aldana.2023}, and continuous charge transport models \cite{Sivan.2022,  Spetzler.2024}.
Compact models use simplified equivalent circuit approximations inspired by physical laws \cite{Ielmini.2017}.
Such models can efficiently integrate the electrical device characteristics in circuit simulations on the system level but do not capture the complex microscopic charge transport dynamics \cite{Ielmini.2017, Gao.2021}.
The kinetic Monte Carlo models consider the stochastic migration of defects in nanoscale volumes at the expense of strongly simplified semiconductor physics and omitting the electrodes \cite{Aldana.2023b, Aldana.2023}.

Recently, vacancy-assisted drift-diffusion models with Schottky boundary conditions have been introduced to describe the charge transport in 2D memristive devices on the micro- to mesoscale, encompassing fully time-dependent \cite{Spetzler.2024} and quasi-static approximations \cite{Sivan.2022}.
The major difference between these models and other classical drift-diffusion approaches \cite{Farrell.2017, Markowich.1990} is the additional equation for mobile defects.
These defects migrate on different timescales with different nonlinear dynamics than electrons and holes \cite{Abdel.2023}.
While vacancy-assisted drift-diffusion models exist for various materials and applications \cite{Strukov.2009, Marchewka.2016, Jourdana.2023, CuestaLopez.2024, Calado.2022, Aoki.2014, Cances.2023, Bataillon.2010}, most of them are not generally applicable to memristive devices.
And even applicable models often assume simplified scenarios, such as side contacts depicted in \Cref{fig:intro-TMDC}e, employ ohmic boundary conditions, or neglect the limitation of vacancy accumulation, commonly referred to as volume exclusion effects.
An exception is the model presented in \cite{Spetzler.2024} which offers a fully time-dependent quasi Fermi level formulation of the drift-diffusion equations of electrons, holes, and ionic defects coupled to the nonlinear Poisson's equation.

Concerning the mathematical study of vacancy-assisted drift–diffusion models, the existence of weak solutions under Ohmic boundary conditions has been proven for memristive devices with general nonlinear dynamics in \cite{Herda.2024}.
Similar results have been established for perovskite solar cells \cite{Abdel.2024b, glitzky2024uniqueness}.
The latter are directly applicable to memristor and memtransistor configurations, since only the computational domain and the electron/hole source terms differ.
To our knowledge, there is currently no corresponding mathematical analysis for Schottky boundary conditions for such semiconductor devices.

Spatial discretization techniques like the finite difference (FD), finite element (FE), and finite volume (FV) methods are commonly employed to solve systems of coupled partial differential equations (PDEs) \cite{Selberherr.1984, Bank.1983, Brezzi.2005, Mock.1983}, thereby connecting microscopic features with device performance.
In particular, the FE and FV methods are widely used for device simulations because they can easily integrate irregular meshes to resemble complex geometries accurately and include geometric mesh refinement over orders of magnitude \cite{Selberherr.1984, Bank.1983, Brezzi.2005, Mock.1983}.
The FV method additionally reflects physical core principles correctly, e.g., it locally conserves fluxes and is consistent with thermodynamic laws, making it particularly suitable for flow problems \cite{Farrell2017a, Eymard.2000}.
This has led to significant research into the design and analysis of numerical FV schemes for drift-diffusion models (e.g., \cite{Cances.2021, Kantner.2020, BessemoulinChatard.2012, BessemoulinChatard.2017, Glitzky.2011, Moatti.2023}).
In \cite{Abdel2023Existence}, an implicit in-time two-point flux approximation (TPFA) finite volume scheme for the charge transport in perovskite solar cells was analyzed, and the existence of discrete solutions was proven.
As the computational models for memristive devices and memtransistors differ in the non-dimensionalization of the PDE system, the boundary conditions, and the device geometry, we adapt in this work the results of \cite{Abdel2023Existence} to prove the existence of discrete solutions for the memristive charge transport model based on both contact boundary models, ohmic and Schottky.
The analysis relies on deriving a key \emph{a priori} estimate for solutions of the system, referred to as entropy-dissipation inequality.
The discrete version of this estimate is instrumental in showing the existence of solutions to the numerical scheme and in assessing the stability of the discretization.
The inequality is derived from a functional related to the physical  \textit{free energy}.
Still, we adopt the term \textit{entropy} to align with the terminology commonly used in the mathematical literature on nonlinear dissipative PDEs \cite{Jungel.2016}.

In contrast to earlier one-dimensional (1D) numerical studies \cite{Spetzler.2024,Jourdana.2023}, 2D simulations offer a more realistic representation of the semiconductor-metal contacts in memtransistors, as illustrated by the device configurations in \Cref{fig:intro-TMDC}e-g. This motivates us to complement our theoretical results with multi-dimensional simulations.
Given that both, Schottky \cite{Spetzler.2024, Sivan.2022} and ohmic boundary models \cite{Jourdana.2023, Jungel.2023, Herda.2024}, have been employed in the literature, we explore the impact of these boundary conditions on device characteristics in detail.
Lastly, we examine the three contact configurations shown in \Cref{fig:intro-TMDC}e-g to determine and quantify the conditions under which simplified contact geometries can accurately approximate the behavior of full 2D contact geometries.
These simulations provide valuable insights into the influence of contact geometry and boundary conditions on device performance, offering guidance for the design and optimization of memristive devices and memtransistors.

This work is organized as follows:
In \Cref{sec:model}, we introduce the underlying charge transport equations, the two different boundary models (ohmic and Schottky), and a suitable non-dimensionalized version of the model.
Next, in \Cref{sec:discrete_model}, we present the finite volume scheme and prove its properties: unconditional stability in entropy, non-negativity of densities, conservation of mass for the ions and existence of solutions.
This is done for Schottky as well as ohmic boundary conditions.
Finally, in \Cref{sec:numerics}, we compare the introduced contact boundary models, ohmic and Schottky, numerically.
Additionally, we analyze the impact of three different contact geometries on the charge transport and the memristive hysteresis before we conclude in \Cref{sec:conclusion}.

\section{Modeling charge transport in memristive devices} \label{sec:model}

This section formulates the charge transport model in \Cref{subsec:model}, incorporating both the Schottky and ohmic contact boundary models and a suitable non-dimensionalization for the numerical analysis.
Following this, \Cref{sec:cont_entropy_dissipation} introduces the definition of entropy functions and establishes a continuous entropy-dissipation inequality.

\subsection{Charge transport model} \label{subsec:model}

Let $\mathbf{\Omega} \subset \mathbb{R}^d$, $d \in \{ 1, 2, 3\}$, be an open, connected, and bounded spatial domain representing the TMDC material layer.
The TMDC layer is typically placed on a substrate and laterally sandwiched between two electrode contacts, as shown in \Cref{fig:device-schematics-memristor}.
The common electrode configurations include side, top, and mixed contacts, with the interface between the electrodes and the TMDC layer denoted by $\mathbf{\Gamma}^C$.
We consider the transport of three carriers: electrons ($\alpha = \electrons$), holes ($\alpha = \holes$), and ionic defects ($\alpha = \ions$).
Our approach closely follows the charge transport model for TMDC-based memristive devices formulated in \cite{Spetzler.2024}.
The set of unknowns consists of $(\varphi_\electrons, \varphi_\holes, \varphi_\ions, \psi)$, where $\varphi_\alpha$ represents the quasi Fermi potential of carrier species $\alpha \in \{ \electrons, \holes, \ions \}$, and $\psi$ is the electrostatic potential.

\begin{figure}[ht]
    \hspace*{-0.8cm}
    \includegraphics[scale = 1.0]{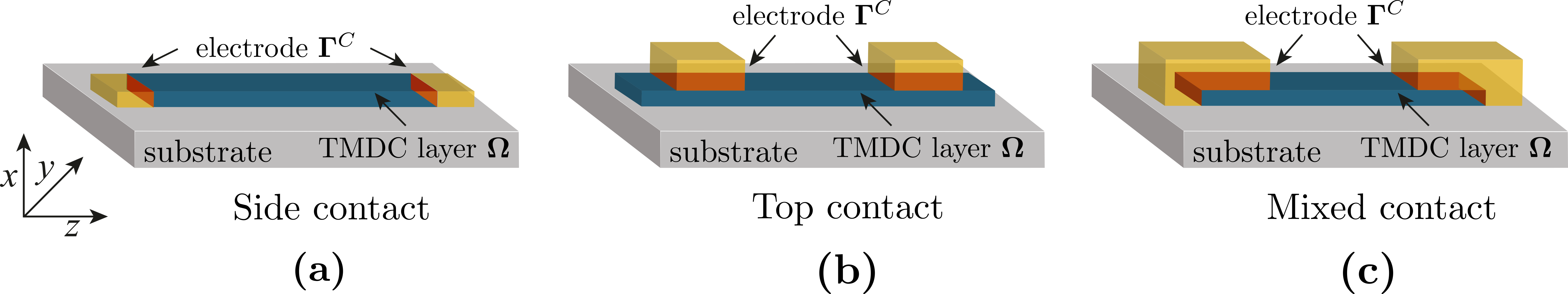}
    \caption{Illustration of a three-dimensional geometry of a memristive device for a \textbf{(a)} side, \textbf{(b)} top, and \textbf{(c)} mixed contact with indicated substrate and TMDC layer $\mathbf{\Omega}$.
    Furthermore, the contact boundary $\mathbf{\Gamma}^C$, defined as the interfaces of the contacts (gold) and the TMDC material (blue), are highlighted in red for each configuration.
        }
    \label{fig:device-schematics-memristor}
\end{figure}


For $t\geq 0$ the electron, hole, and ionic defect densities $n_\electrons, n_\holes, n_\ions$ satisfy the continuity equations
\begin{subequations} \label{eq:model-memristor}
    \begin{align} \label{eq:model-cont-eq-memristor}
        z_\alpha q \partial_t n_\alpha + \nabla\cdot \mathbf{j}_\alpha = 0, \quad { \mathbf{x} }\in\mathbf{\Omega},\ t\geq 0, \quad \text{for}\; \alpha \in \{ \electrons, \holes, \ions\},
    \end{align}
    which are self-consistently coupled via the electrostatic potential $\psi$ to the nonlinear Poisson equation
    \begin{align}\label{eq:model-poisson-memristor}
        - \nabla \cdot (\varepsilon_s \nabla \psi ) =
        \!\!\!\! \sum_{\alpha \in \{ \electrons, \holes, \ions \}} \!\!\!\! z_\alpha q n_\alpha
        + z_C q C({ \mathbf{x} }),
        ~
        \quad   \mathbf{x} \in\mathbf{\Omega},\ t\geq 0.
    \end{align}
The quantity $z_\alpha$ is the charge number of the moving charge carrier species $\alpha \in \{ \electrons, \holes, \ions\}$, $q$ the positive elementary charge and $\mathbf{j}_\alpha$ the carrier dependent current density.
For electrons and holes, we assume $z_\electrons=-1$ and $z_\holes=1$.
For the ionic defects, we have $z_\ions \in \mathbb{Z}$.
Moreover, we define $\varepsilon_s = \varepsilon_0 \varepsilon_r$ as the dielectric permittivity of the TMDC material given as a product of the vacuum permittivity $\varepsilon_0$ and the relative permittivity $\varepsilon_r$.
Furthermore, $C \in L^{\infty}(\mathbf{\Omega})$ denotes one type of background charge density with a respective sign $z_C \in \{ -1, 1\}$.
For all carriers, we assume no reaction or production rates in \eqref{eq:model-cont-eq-memristor}.
The current densities contributing to the continuity equations \eqref{eq:model-cont-eq-memristor} are defined as
\begin{equation}  \label{eq:cont-flux-memristor}
    \mathbf{j}_\alpha = - z_\alpha^2 q \mu_\alpha n_\alpha \nabla\varphi_\alpha,
    \quad \mathbf{x} \in\mathbf{\Omega},\ t\geq 0,
    ~
    \quad \text{for}\; \alpha \in \{ \electrons, \holes, \ions \},
\end{equation}
where $\mu_\electrons, \mu_\holes, \mu_\ions$ are the carrier-dependent mobilities.
We have formulated the model equations based on the quasi Fermi potentials, the electrostatic potential, and the charge carrier densities.
A state equation connects these quantities
\begin{equation} \label{eq:state-eq}
    n_\alpha = N_\alpha \mathcal{F}_\alpha \Bigl(\eta_\alpha(\varphi_\alpha, \psi) \Bigr), \quad \eta_\alpha = z_\alpha \frac{q(\varphi_\alpha - \psi) + E_\alpha}{k_B T}, \quad \alpha \in \{ \electrons, \holes, \ions \}.
\end{equation}
\end{subequations}
Here, $N_\electrons, N_\holes$ represent the effective densities of states of the conduction and valence bands, respectively, while $N_\ions$ denotes the saturation limit for the ionic defect concentration.
The function $\mathcal{F}_\alpha$ is referred to as statistics function, and its argument $\eta_\alpha$ is called the dimensionless chemical potential.
The dimensionless chemical potential depends on the intrinsic energy level $E_\alpha$, the temperature $T$, and the Boltzmann constant $k_B$.
For electrons and holes, we also call $E_\electrons, E_\holes$ the intrinsic band edge energies of the conduction and valence bands, respectively.
Lastly, the choice of statistics functions $\mathcal{F}_\alpha$ in \eqref{eq:state-eq} depends on the carrier species $\alpha\in\{\electrons, \holes, \ions\}$.
For inorganic three-dimensional semiconductor devices, the Fermi-Dirac integral of order $1/2$ is typically used for electrons and holes
\begin{equation} \label{eq:Fermi-Dirac}
    F_{1/2}(\eta) = \frac{2}{ \sqrt{\pi} } \int_{0}^{\infty} \frac{\xi^{1/2}}{\exp(\xi - \eta) + 1}\: \mathrm{d}\xi, \quad\eta \in \mathbb{R},
\end{equation}
i.e., $\mathcal{F}_\electrons = \mathcal{F}_\holes = F_{1/2}$.
Contrarily, for the ionic defects $\alpha = \ions$, we choose the Fermi-Dirac integral of order $-1$,
\begin{align} \label{eq:FD-1-statistics}
    F_{-1}(\eta) = \frac{1}{  \exp(-\eta) + 1}, \quad \eta \in \mathbb{R},
\end{align}
i.e., $\mathcal{F}_\ions = F_{-1}$, which limits the ionic defect accumulation to a maximum defect density \cite{Abdel.2021, Abdel.2023}.
Generally, for the upcoming mathematical analysis of the model, we assume for $\mathcal{F}_\alpha$, $\alpha\in\{\electrons, \holes, \ions\}$,
\begin{equation}\tag{H1} \label{hyp:statistics-n-p}
    \left\{
    \begin{aligned}
        &\mathcal{F}_\electrons, \mathcal{F}_\holes: \mathbb{R} \rightarrow (0,\infty)\text{ are }  C^1\text{- diffeomorphisms};\\[.5em]
        &0 < \mathcal{F}_\alpha'(\eta) \leq \mathcal{F}_\alpha(\eta) \leq \exp(\eta), \quad\eta \in \mathbb{R},\ \alpha \in\{\electrons, \holes\} .
    \end{aligned}
    \right.
\end{equation}
and
\begin{equation}\tag{H2} \label{hyp:statistics-a}
    \left\{
    \begin{aligned}
        &\mathcal{F}_\ions: \mathbb{R} \rightarrow (0,1)\text{ is a }  C^1\text{- diffeomorphism};\\[.5em]
        &0 < \mathcal{F}_\ions'(\eta) \leq \mathcal{F}_\ions(\eta) \leq \exp(\eta), \quad\eta \in \mathbb{R}.
    \end{aligned}
    \right.
\end{equation}
Indeed, the introduced statistics functions \eqref{eq:Fermi-Dirac} and \eqref{eq:FD-1-statistics} satisfy these hypotheses \cite{Abdel.2023}.
Moreover, direct implications for $\mathcal{F}_\alpha$, $\alpha \in \{ \electrons, \holes, \ions \}$, are the inequalities
\begin{align} \label{eq:statistics-ineq}
        ( \mathcal{F}_\alpha^{-1} )' \bigl(\frac{n_\alpha}{N_\alpha} \bigr)
    = \left( \mathcal{F}'_\alpha \left( \mathcal{F}_\alpha^{-1} \bigl(\frac{n_\alpha}{N_\alpha} \bigr) \right) \right)^{-1} \geq \frac{1}{n},
    \quad
    ~
    \log\bigl(\frac{n_\alpha}{N_\alpha} \bigr) \leq\mathcal{F}_\alpha^{-1} \bigl(\frac{n_\alpha}{N_\alpha} \bigr).
\end{align}
We can formulate a generalized Einstein relation for $\alpha \in \{\electrons, \holes, \ions \}$,
\begin{equation} \label{eq:generalized-Einst-electric}
	D_\alpha\bigl(\frac{n_\alpha}{N_\alpha} \bigr) =\mu_\alpha U_T g_\alpha \bigl(\frac{n_\alpha}{N_\alpha} \bigr),
    \quad  g_\alpha \bigl(\frac{n_\alpha}{N_\alpha} \bigr)= \frac{n_\alpha}{N_\alpha} \left( \mathcal{F}_\alpha^{-1} \right)'\bigl(\frac{n_\alpha}{N_\alpha} \bigr),
\end{equation}
where $U_T$ denotes the thermal voltage and the quantity $g_\alpha$ refers to the nonlinear diffusion enhancement \cite{Abdel.2021}.
With the generalized Einstein relation and \eqref{eq:state-eq} we can rewrite the current densities \eqref{eq:cont-flux-memristor} in a drift-diffusion form with a density-dependent diffusion
\begin{align} \label{eq:fluxes-DD-form}
	\mathbf{j}_\alpha  = -z_\alpha q \Bigl( D_\alpha\bigl(\frac{n_\alpha}{N_\alpha} \bigr)  \nabla n_\alpha + z_\alpha \mu_\alpha n_\alpha \nabla \psi \Bigr).
\end{align}
Finally, the system \eqref{eq:model-memristor} is supplied with initial conditions for $t = 0$
\begin{align} \label{eq:initial-cond-memristor}
    \varphi_\electrons(\mathbf{x}, 0) = \varphi_\electrons^0(\mathbf{x}),
    ~
    \quad\varphi_\holes(\mathbf{x}, 0) = \varphi_\holes^0(\mathbf{x}),
    ~
    \quad \varphi_\ions(\mathbf{x}, 0) = \varphi_\ions^0(\mathbf{x}),
\quad  \mathbf{x} \in \boldsymbol{\Omega},
\end{align}
where we assume $\varphi_\electrons^0, \varphi_\holes^0, \varphi_\ions^0 \in L^{\infty}(\mathbf{\Omega})$.
We define the initial densities via $n_\alpha^0(\mathbf{x}) = N_\alpha\mathcal{F}_\alpha(\eta_\alpha(\varphi_\alpha^0,\psi(\mathbf{x},0)))$ for $\alpha \in \{ \electrons, \holes, \ions \}$.

\subsubsection{Boundary conditions} \label{sec:BC}
We divide the outer boundary of the device geometry $\mathbf{\Omega}$ into two parts:  $\mathbf{\Gamma}^C$ and $\mathbf{\Gamma}^N$.
Here, $\mathbf{\Gamma}^C$ denotes the interface between the semiconductor and the electrodes, as highlighted in red in \Cref{fig:device-schematics-memristor}, while $\mathbf{\Gamma}^N$ refers to the remaining boundaries.
We assume that $\boldsymbol{\Gamma}^C$ and $\boldsymbol{\Gamma}^N$ are closed subsets of $\partial\boldsymbol{\Omega}$ with $\partial \mathbf{\Omega} = \mathbf{\Gamma}^C \cup \mathbf{\Gamma}^N$.
For the ionic point defects all boundaries represent physical barriers described by an isolating Neumann boundary condition
\begin{subequations}
    \begin{align} \label{eq:BC-a-memristor}
        \mathbf{j}_\ions(\mathbf{x}, t) \cdot {\boldsymbol{\nu}}(\mathbf{x}) =
        0, \quad \mathbf{x} \in \partial \mathbf{\Omega}, \ t \geq 0,
    \end{align}
    where $\boldsymbol{\nu}$ is the outward pointing unit normal to $\partial \mathbf{\Omega}$.
    At the isolating Neumann boundary we impose for the other species zero flux conditions
    \begin{align} \label{eq:BC-GammaN-memristor}
        \nabla \psi(\mathbf{x}, t) \cdot {\boldsymbol{\nu}}(\mathbf{x}) =
        ~
        \mathbf{j}_\electrons(\mathbf{x}, t) \cdot {\boldsymbol{\nu}}(\mathbf{x}) =
        ~
        \mathbf{j}_\holes(\mathbf{x}, t) \cdot {\boldsymbol{\nu}}(\mathbf{x}) =
        ~
        0, \quad \mathbf{x} \in \mathbf{\Gamma}^N, \ t \geq 0.
    \end{align}
\end{subequations}

\noindent
\textbf{1. Schottky boundary model.}
A Schottky model is commonly assumed at the metal-semiconductor contact $\mathbf{\Gamma}^C$ for memristive devices.
The Schottky boundary conditions describe thermionic emission of electrons and holes via
\begin{subequations} \label{eq:BC-Schottky-time-dependent}
    \begin{align}
        \mathbf{j}_\electrons(\mathbf{x}, t) \cdot {\boldsymbol{\nu}}(\mathbf{x}) &= z_\electrons q v_\electrons (n_{\mathrm{n}}(\mathbf{x}, t) - n_{\mathrm{n}, 0}),  \quad \mathbf{x} \in \mathbf{\Gamma}^C, \ t \geq 0, \label{eq:BC-cond-n-memristor} \\
        ~
        \mathbf{j}_\holes(\mathbf{x}, t) \cdot {\boldsymbol{\nu}}(\mathbf{x}) &= z_\holes q v_\holes (n_\holes(\mathbf{x}, t) - n_{\holes, 0} ), \quad \mathbf{x} \in  \mathbf{\Gamma}^C, \ t \geq 0. \label{eq:BC-cond-p-memristor}
    \end{align}
    Here, $v_\electrons, v_\holes \geq 0 $ are the electron and hole recombination velocities, respectively.
    The equilibrium carrier densities $n_{\electrons, 0}$, and $n_{\holes, 0}$ at the contacts are given by $n_{\alpha, 0} := N_\alpha \mathcal{F}_\alpha( \eta_\alpha (\varphi_0, \psi_0))$ for $\alpha \in \{ \electrons, \holes\}$, where $\varphi_0$ is the constant quasi Fermi potential at equilibrium, and $\psi_0 = -(\phi_0 - E_\electrons)/q $ denotes the intrinsic electrostatic potential barrier.
    In the expression for $\psi_0$, we have $\phi_0 = \phi_0(\mathbf{x}) >0$ as the intrinsic Schottky energy barrier constant.
    For the electrostatic potential at the metal-semiconductor interface $\mathbf{\Gamma}^C$, we apply the Dirichlet condition
    \begin{align} \label{eq:BC-psi-Schottky}
    \psi(\mathbf{x}, t) = \psi_0 ( \mathbf{x}) + V(\mathbf{x}, t)
    ~
    , \quad \mathbf{x} \in  \mathbf{\Gamma}^C, \ t \geq 0,
    \end{align}
    where $V$ is a time-dependent voltage applied at the contact.
\end{subequations}

\noindent
\textbf{2. Ohmic boundary model.}
As we will see in the simulations, for specific scenarios, we can replace the Schottky contact boundary model with an ohmic contact boundary model
\begin{subequations}\label{eq:BC-Ohmic-time-dependent}
    \begin{alignat}{2}
        \psi(\mathbf{x}, t) &= \psi_0 ( \mathbf{x}) + V(\mathbf{x}, t)
        ~
        , \quad &&\mathbf{x} \in  \mathbf{\Gamma}^C, \ t \geq 0,\\
        ~
        \varphi_\electrons(\mathbf{x}, t)  = \varphi_\holes(\mathbf{x}, t) &= V (\mathbf{x}, t), \quad
        ~
        &&\mathbf{x} \in \mathbf{\Gamma}^C,  \ t \geq 0, \label{eq:BC-psi-Dirichlet}
    \end{alignat}
\end{subequations}
where $V$ denotes again a time-dependent applied voltage.
In other words, the Robin boundary conditions for electrons and holes \eqref{eq:BC-cond-n-memristor}, \eqref{eq:BC-cond-p-memristor} are replaced by a Dirichlet condition.

\noindent
\textbf{3. Time-constant applied voltage for Schottky and ohmic contact boundary models.}
In order to avoid unnecessary technicalities in the exposition we make the additional assumption of a time-constant applied voltage, i.e., $V(\mathbf{x}, t) = \overline{V} (\mathbf{x})$, $\mathbf{x} \in \mathbf{\Gamma}^C$.
Typically, $\overline{V}$ is constant along each electrode.
The proofs are valid for both contact boundary models, the Schottky boundary conditions \eqref{eq:BC-Schottky-time-dependent} and the ohmic contact boundary conditions \eqref{eq:BC-Ohmic-time-dependent}.
Our stability and existence results can be extended to time-varying applied voltages under some mild assumptions on the boundary data.
We refer to Remark~\ref{rem.time_varying} for more details.
This extension is particularly relevant for modeling pulse measurements \cite{Spetzler.2024}, where devices are operated by applying repetitively short constant-voltage pulses.

\noindent
In the following, let $\psi^D, \varphi^D \in W^{1, \infty}(\boldsymbol{\Omega})$ be given Dirichlet functions defined on $\mathbf{\Omega}$.

\noindent
\textbf{3a. Schottky boundary model.}
On the one hand, for the Schottky boundary model, we adjust the outer boundary conditions \eqref{eq:BC-Schottky-time-dependent} to
\begin{subequations}\label{eq:BC-Schottky-const-volt}
    \begin{alignat}{2}
        \psi(\mathbf{x}, t) &= \tilde{\psi}^D(\mathbf{x}), \label{eq:BC-Schottky-const-volt-psi}\\
        ~
        \mathbf{j}_\alpha (\mathbf{x}, t) \cdot \boldsymbol{\nu} (\mathbf{x}) &= z_\alpha v_\alpha \left( n_\alpha(\mathbf{x}, t) - n_\alpha^D \right), \quad \text{for} \; \alpha \in \{\electrons, \holes \}, \label{eq:BC-Schottky-const-volt-n-p}
    \end{alignat}
\end{subequations}
where $\mathbf{x} \in \mathbf{\Gamma}^C,  t \geq 0$ and $n_\alpha^D := N_\alpha \mathcal{F}_\alpha \left( \eta_\alpha(\varphi^D, \psi^D) \right)$ and $ \tilde{\psi}^D(\mathbf{x}) := \psi^D(\mathbf{x}) + \overline{V}$ with a fixed voltage $\overline{V}$.

\noindent
\textbf{3b. Ohmic boundary model.}
On the other hand, for the ohmic contact boundary model, we adjust the outer boundary conditions \eqref{eq:BC-Ohmic-time-dependent} to
\begin{alignat}{2}\label{eq:BC-ohmic-const-volt}
    \psi(\mathbf{x}, t) = \psi^D(\mathbf{x}), \;\; \varphi_\electrons(\mathbf{x}, t)  = \varphi_\holes(\mathbf{x}, t) &= \varphi^D(\mathbf{x}), \quad
    ~
    &&\mathbf{x} \in \mathbf{\Gamma}^C,  \ t \geq 0.
\end{alignat}
Note that we consider in \eqref{eq:BC-Schottky-const-volt} and \eqref{eq:BC-ohmic-const-volt} the traces of the functions $\psi^D, \varphi^D \in W^{1, \infty}(\boldsymbol{\Omega})$.
Until the end of \Cref{sec:discrete_model}, we focus exclusively on two charge transport models, supplied with Schottky and ohmic boundary conditions with a voltage constant in time at the semiconductor-metal contact $\mathbf{\Gamma}^C$.

\subsubsection{Non-dimensionalization of the model} \label{sec:nondimens-model}

In this section, we derive a non-dimensionalized version of the charge transport model \eqref{eq:model-memristor}, which serves as a reference model for the numerical analysis.
Following \cite[Section 2.4]{markowich1985stationary}, the equations are expressed using the scaled variables, defined as the ratio of the unscaled quantity to the scaling factors in \Cref{tab:scalingfactors-memristors}.
We use the values in \Cref{tab:TMDC-general} for a MoS$_2$-based memristive device as a reference parameter set.

\begin{table}[!ht]
\centering
{\footnotesize
    \begin{tabular}{|c|l|c|l|}
            \hline Symbol & Meaning & Scaling factor & Order of magnitude \\\hline&&&\\
            $\:\:\mathbf{x}$ & Space vector & $l$ & $10^{-6}$ m\\[2.0ex]
            ~
            $\varphi_\alpha$, $\varphi^D$, $\varphi_\alpha^0$  &  Quasi Fermi potentials & $U_T$& $10^{-2}$ V\\[2.0ex]
            ~
            $\psi$, $\psi^D$ & Electric potential & $U_T$ & $10^{-2}$ V\\[2.0ex]
            ~
            $\overline{V}$ & Applied voltage & $U_T$ & $10^{-2}$ V\\[2.0ex]
            ~
            $C$ & Doping profile & $\tilde{C}$ & $10^{21}$ m$^{-3}$\\[2.0ex]
            ~
            $n_\electrons$ & Electron density & $\tilde{N}_\electrons$ & $10^{25}$ m$^{-3}$\\[2.0ex]
            ~
            $n_\holes$ & Hole density & $\tilde{C}$ & $10^{21}$ m$^{-3}$\\[2.0ex]
            ~
            $n_\ions$ & Defect density & $\tilde{N}_\ions$ & $10^{28}$ m$^{-3}$\\[2.0ex]
            ~
            $\mu_\electrons,\mu_\holes$ & Electron and hole mobility & $\tilde{\mu}$ & $10^{-4}$ m$^2$/(Vs)\\[2.0ex]
            ~
            $\mu_\ions$ & Defect mobility & $\tilde{\mu}_\ions$&$10^{-14}$ m$^2$/(Vs)\\[2.0ex]
            ~
            $t$ & Time variable & $\displaystyle\frac{l^2}{\tilde{\mu}_\ions U_T}$& $10^{4}$ s\\[2.0ex]
            ~
            $\mathbf{j}_\electrons$ & Electron current density & $\displaystyle\frac{q U_T \tilde{N}_\electrons \tilde{\mu}}{l}$&  $10^{6}$ C/(m$^2$s)\\[2.0ex]
            ~
            $\mathbf{j}_\holes$ & Hole current density & $\displaystyle\frac{q U_T \tilde{C} \tilde{\mu}}{l}$&  $10^{2}$ C/(m$^2$s)\\[2.0ex]
            ~
            $\mathbf{j}_\ions$ & Defect current density & $\displaystyle\frac{q U_T \tilde{N}_\ions\tilde{\mu}_\ions}{l}$&  $10^{-1}$ C/(m$^2$s)\\[2.0ex]
            ~
            $q v_\electrons, q v_\holes$ & Scaled recombination velocity & $\displaystyle\frac{q \tilde{\mu}U_T}{l}$&  $10^{-18}$ Cm/s\\[2.0ex]
            ~
            $n_{\electrons, 0}$ & Electron equilibrium density & $\tilde{N}_\electrons$ & $10^{25}$ m$^{-3}$\\[2.0ex]
            ~
            $n_{\holes, 0}$ & Hole equilibrium density & $\tilde{C}$ & $10^{21}$ m$^{-3}$\\[2.0ex]
            \hline
    \end{tabular}
    }
    \caption{
        Scaling factors of a MoS$_2$-based memristive device related to the parameters in \Cref{tab:TMDC-general}.
        Since the doping and hole densities are small compared to the electron and defect densities, we use the same scaling factor $\tilde{C}$.
        }
    \label{tab:scalingfactors-memristors}
\end{table}

We assume that the electron and hole mobilities are equal, i.e., $\mu_\electrons = \mu_\holes$.
However, in contrast to \cite{Abdel2023Existence}, we allow for different magnitudes for electron and hole densities.
We set all intrinsic energies $E_\alpha = 0$ for $\alpha = { \electrons, \holes, \ions }$, although in practice, this is not the case.
This assumption is made to simplify notation, and the fundamental ideas of our subsequent analysis hold even if  $E_\alpha \neq 0$ and $\mu_\electrons \neq \mu_\holes$.
The timescale of the defect migration is chosen, and the spatial vector is scaled with respect to the channel length as the main flow direction.
In the following, the scaled quantities are denoted with the same symbol as the corresponding unscaled quantities.
The dimensionless version of the mass balance equations \eqref{eq:model-cont-eq-memristor} reads
\begin{subequations} \label{eq:model-dimless-memristor}
    \begin{align}
        \nu\,z_{\electrons} \partial_t n_{\electrons} + \nabla\cdot \mathbf{j}_{\electrons} &=  0, &&{\mathbf{x}}\in\mathbf{\Omega},\ t\geq 0, \label{eq:model-cont-eq-n-dimless-memristor}\\
        ~
        \nu\,z_{\holes} \partial_t n_{\holes} + \nabla\cdot \mathbf{j}_{\holes} &= 0, &&{\mathbf{x}}\in\mathbf{\Omega},\ t\geq 0, \label{eq:model-cont-eq-p-dimless-memristor}\\
        ~
        z_{\ions} \partial_t n_{\ions} + \nabla\cdot \mathbf{j}_{\ions} &= 0,&&{\mathbf{x}}\in\mathbf{\Omega},\ t\geq 0, \label{eq:model-cont-eq-a-dimless-memristor}
    \end{align}
    which are self-consistently coupled to the non-dimensionalized Poisson's equation 
    \begin{equation}\label{eq:model-poisson-dimless-memristor}
            - \lambda^2\Delta\psi
            =
            \delta_\electrons \Bigl( z_{\electrons}n_{\electrons} + \delta_\holes \left( z_{\holes}n_{\holes} + z_C C \right) \Bigr) + z_{\ions}n_{\ions},
            ~
            \quad  {\mathbf{x}}\in{\mathbf{\Omega}}, \ t\geq 0.
    \end{equation}
    The dimensionless charge carrier currents are given by
    \begin{align} \label{eq:cont-flux-dimless-memristor}
        \mathbf{j}_\alpha = - z_\alpha^2  n_\alpha \nabla \varphi_\alpha, \quad \alpha \in \{ \electrons, \holes, \ions \},
    \end{align}
    and the non-dimensionalized version of the state equation \eqref{eq:state-eq} corresponds to
    \begin{equation} \label{eq:state-eq-dimless-memristor}
            n_\alpha = \mathcal{F}_\alpha \Bigl(z_\alpha(\varphi_\alpha - \psi) \Bigr), \quad \alpha \in \{ \electrons, \holes, \ions\}.
    \end{equation}
\end{subequations}
There are four dimensionless quantities:
We have the rescaled Debye length $\lambda \approx 10^{-5}$, the mobility parameter $\nu \approx 10^{-10}$, the electron concentration parameter $ \delta_\electrons \approx10^{-3}$, and the hole concentration parameter $\delta_\holes \approx10^{-4}$.
These quantities are defined as
\begin{equation*}
    \lambda = \sqrt{\frac{\varepsilon_sU_T}{l^2q\tilde{N}_\ions}},
    \quad \nu = \frac{\tilde{\mu}_{\ions}}{\tilde{\mu}},
    \quad \delta_\electrons = \frac{\tilde{N}_\electrons}{\tilde{N}_\ions},
    \quad \delta_\holes = \frac{\tilde{C}}{\tilde{N}_\electrons}.
\end{equation*}

\textbf{Non-dimensionalized boundary and initial conditions.}
We use the non-dimensionalized version of \eqref{eq:initial-cond-memristor}  as the initial condition.
For the boundary conditions, we supply the model for both contact boundary models with a dimensionless version of the no-flux boundary condition for the ionic defects \eqref{eq:BC-a-memristor}.
Additionally, on the isolating boundary $\mathbf{\Gamma}^N$, we consider dimensionless versions of the no-flux boundary conditions \eqref{eq:BC-GammaN-memristor}.
In total, the non-dimensionalized versions of these boundary conditions are as follows for $t\geq 0$
\begin{subequations} \label{eq:memristor-BC}
    \begin{alignat}{2}
       -\lambda^2 \nabla \psi \cdot {\boldsymbol{\nu}} = \mathbf{j}_\electrons \cdot {\boldsymbol{\nu}} &= \mathbf{j}_\holes \cdot {\boldsymbol{\nu}}
        = 0, \quad
        &&\mathbf{x} \in \mathbf{\Gamma}^N, \label{eq:memristor-Neumann-BC-dimless} \\
        ~
        \mathbf{j}_\ions \cdot {\boldsymbol{\nu}}
        &=0, \quad
        &&\mathbf{x} \in \partial \mathbf{\Omega}, \label{eq:memristor-BC-anion-dimless}
    \end{alignat}
\end{subequations}
where the current densities are definied in \eqref{eq:cont-flux-dimless-memristor}.
For the Schottky contact boundary model we supply the model with a non-dimensionalized version of \eqref{eq:BC-Schottky-const-volt} for $t\geq 0$
\begin{subequations} \label{eq:memristor-Schottky-BC-dimless}
    \begin{alignat}{2}
        \psi &= \tilde{\psi}^D, \quad &&\mathbf{x} \in \mathbf{\Gamma}^C \label{eq:memristor-Schottky-BC-dimless-psi}\\
        ~
        \mathbf{j}_\alpha \cdot \boldsymbol{\nu}  &= z_\alpha v_\alpha \left( n_\alpha - n_\alpha^D \right), \quad \text{for}\; \alpha \in \{\electrons, \holes \}, \quad &&\mathbf{x} \in \mathbf{\Gamma}^C, \label{eq:memristor-Schottky-BC-dimless-n-p}
    \end{alignat}
\end{subequations}
where $\tilde{\psi}^D := \psi^D + \overline{V}$ and $n_\alpha^D = \mathcal{F}_\alpha ( z_\alpha (\varphi^D -\psi^D))$.
Contrarily, in case of the ohmic contact boundary model, we assume a dimensionless version of \eqref{eq:BC-ohmic-const-volt}
\begin{align} \label{eq:memristor-Dirichlet-BC-dimless}
    \psi = \psi^D(\mathbf{x}), \;\; \varphi_\electrons  = \varphi_\holes
        &= \varphi^D(\mathbf{x}), \quad
        &&\mathbf{x} \in \mathbf{\Gamma}^C.
\end{align}

A mathematical study of the continuous charge transport model with ohmic boundary conditions \eqref{eq:memristor-Dirichlet-BC-dimless} has been conducted in  \cite{Jourdana.2023, Jungel.2023, Herda.2024}.
In particular, in \cite{Herda.2024} the authors proved the existence of weak solutions for the same choice of statistics functions for the charge carriers as in this work.
Weak formulations based on the Schottky contact boundary model \eqref{eq:memristor-Schottky-BC-dimless} are not discussed in mathematical literature yet.

\subsection{Entropy functions and continuous entropy-dissipation inequality}
\label{sec:cont_entropy_dissipation}
A well-established tool for the structural analysis of PDE models and their discrete counterparts is the entropy method \cite{Jungel.2016}.
Typically, the entropy function $\Phi_\alpha$ of a carrier $\alpha \in \{\electrons, \holes, \ions \}$ is defined as the anti-derivative of the inverse of the statistics function
\begin{equation}\label{eq:entropyfunc}
    \Phi_\alpha'(x) = \mathcal{F}_\alpha^{-1}(x), \quad x \geq 0.
\end{equation}
Since the statistics function is strictly increasing (due to \eqref{hyp:statistics-n-p} and \eqref{hyp:statistics-a}), $\Phi_\alpha$ is strictly convex.
By \eqref{eq:entropyfunc} the entropy function $\Phi_\alpha$ is only uniquely defined up to a constant.
Thus, for the ionic defects we assume that the integration constant to uniquely determine $\Phi_\ions$ via \eqref{eq:entropyfunc} is chosen in such a way that $\Phi_\ions$ is non-negative and vanishes at only one point.
Moreover, for electrons and holes $\alpha = \electrons, \holes$  we introduce a relative entropy function to deal with the boundary conditions.
We have
\begin{align} \label{eq:H-function}
    H_\alpha(x,y) = \Phi_\alpha(x) - \Phi_\alpha(y) - \Phi_\alpha'(y)(x-y), \quad x, y \geq 0,
\end{align}
which is non-negative due to the convexity of $\Phi_\alpha$.
For example, in case of a Boltzmann approximation, i.e. $\mathcal{F}(\eta) = \exp(\eta)$, we have
\begin{align*}
    \Phi(x) = x\log(x) -x +1, \quad x \geq 0,
\end{align*}
and, in case of the Fermi-Dirac integral of order $-1$, i.e., $\mathcal{F}(\eta) = (\exp(-\eta) + 1)^{-1}$, we  have
\begin{align*}
    \Phi(x) = x\log(x) +(1-x)\log(1-x)+\log(2), \quad x \geq 0.
\end{align*}
With this, we can formulate a \textit{total relative entropy} $\mathbb{E}$ with respect to the boundary values $\psi^D, \varphi^D \in W^{1, \infty} ( \mathbf{\Omega})$.
The total relative entropy is adapted from \cite{Abdel2023Existence}.
For both types of boundary conditions, Schottky \eqref{eq:memristor-Schottky-BC-dimless} and ohmic \eqref{eq:memristor-Dirichlet-BC-dimless}, we define for $t \geq 0$
\begin{equation}\label{eq:continuous-entropy}
    \begin{split}
        \mathbb{E}(t) =
        &\;\frac{\lambda^2}{2}\int_{\mathbf{\Omega}} |\nabla (\psi - \hat{\psi}^D)|^2\,d\mathbf{x}
        ~
        + \int_{\mathbf{\Omega}} \Phi_{\ions}(n_{\ions})\,d\mathbf{x}\\
        ~
        &+ \delta_\electrons \int_{\mathbf{\Omega}} H_\electrons(n_\electrons, n_\electrons^D)  \,d\mathbf{x}
        ~
        +  \delta_\electrons \delta_\holes  \int_{\mathbf{\Omega}} H_\holes(n_\holes, n_\holes^D)  \,d\mathbf{x},
    \end{split}
\end{equation}
where we set $\hat{\psi}^D = \psi^D + \overline{V}$ for the Schottky and $\hat{\psi}^D = \psi^D$  for the ohmic boundary conditions.
The densities $n_\electrons^D$, $n_\holes^D$ can be calculated by inserting $\varphi^D, \psi^D$ into the dimensionless state equation \eqref{eq:state-eq-dimless-memristor}.
Since we assume $\Phi_{\ions} \geq 0$ the second term is non-negative as well which implies that the total entropy is non-negative.

For each boundary condition, we define the dissipation functionals separately.
The associated dissipation $\mathbb{D}_{\reviewerTwo{\text{SC}}}$ for the Schottky boundary conditions \reviewerTwo{(SC)} \eqref{eq:memristor-Schottky-BC-dimless} reads
\begin{equation} \label{eq:continuous-dissipation-Schottky}
    \begin{split}
        \mathbb{D}_{\reviewerTwo{\text{SC}}}(t)
        =&
        \frac{\delta_\electrons}{2\nu}  \int_\mathbf{\Omega} n_\electrons \vert \nabla \varphi_\electrons \vert^2  \,d\mathbf{x}
        ~
        + \frac{\delta_\electrons \delta_\holes}{2\nu}  \int_\mathbf{\Omega} n_\holes \vert \nabla \varphi_\holes \vert^2  \,d\mathbf{x}\\
        ~
        &+ \frac{z_{\ions}^2  }{2}\int_\mathbf{\Omega} n_{\ions} \vert \nabla \varphi_{\ions}\vert^2  \,d\mathbf{x}\\
        ~
        &+ \frac{\delta_\electrons}{\nu} \int_{\mathbf{\Gamma}^C}  v_\electrons  \bigl( \mathcal{F}^{-1}_\electrons (n_\electrons) - \mathcal{F}^{-1}_\electrons (n_\electrons^D) \bigr) \left( n_\electrons - n_\electrons^D \right) \,d\gamma\\
        ~
        &+ \frac{\delta_\electrons\delta_\holes}{\nu} \int_{\mathbf{\Gamma}^C} v_\holes  \bigl( \mathcal{F}^{-1}_\holes (n_\holes) - \mathcal{F}^{-1}_\holes (n_\holes^D) \bigr) \left( n_\holes - n_\holes^D \right) \,d\gamma.
    \end{split}
\end{equation}
The boundary integrals in the last two terms are non-negative due to the monotonicity of the inverse of statistics function $\mathcal{F}_\alpha^{-1}$.
Hence, the overall dissipation for the Schottky contact boundary model is non-negative as well.
For ohmic boundary conditions \reviewerTwo{(OC)} \eqref{eq:memristor-Dirichlet-BC-dimless}, the non-negative dissipation for $t \geq 0$ is defined as
\begin{equation} \label{eq:continuous-dissipation-Dirichlet}
    \begin{split}
        \mathbb{D}_{\reviewerTwo{\text{OC}}}(t)
        =&
        \frac{\delta_\electrons}{2\nu}  \int_\mathbf{\Omega} n_\electrons \vert \nabla \varphi_\electrons \vert^2  \,d\mathbf{x}
        ~
        + \frac{\delta_\electrons \delta_\holes}{2\nu}  \int_\mathbf{\Omega} n_\holes \vert \nabla \varphi_\holes \vert^2  \,d\mathbf{x}\\
        ~
        &+ \frac{z_{\ions}^2  }{2}\int_\mathbf{\Omega} n_{\ions} \vert \nabla \varphi_{\ions}\vert^2  \,d\mathbf{x}.
    \end{split}
\end{equation}
With the entropy and dissipation formulations, we can establish a continuous entropy-dissipation inequality, a crucial \textit{a priori} estimate to investigate the charge transport model.
In particular, a discrete variant of this theorem helps us to prove the existence of discrete solutions.

\begin{theorem}(Continuous entropy-dissipation inequality) \label{thm:cont-E-D-memristor}
    Consider a smooth solution to the model \eqref{eq:model-dimless-memristor}, with initial conditions \eqref{eq:initial-cond-memristor} and boundary conditions \eqref{eq:memristor-BC} and \eqref{eq:memristor-Schottky-BC-dimless} (or \eqref{eq:memristor-Dirichlet-BC-dimless}).
    Then, for any $\varepsilon>0$, there is a constant $c_{\varepsilon, \mathbf{\Omega}}>0$ such that
    \begin{align} \label{eq:continuous-E-D-inequality-memristor}
        \frac{\text{d}}{\text{d}t}\mathbb{E}(t)  +  \mathbb{D}(t) \leq c_{\varepsilon, \mathbf{\Omega}} + \varepsilon \mathbb{E}(t), \quad t \geq 0,
    \end{align}
    where the entropy is defined in \eqref{eq:continuous-entropy} and the dissipation \reviewerTwo{$\mathbb{D} = \mathbb{D}_{\text{SC}}$} in \eqref{eq:continuous-dissipation-Schottky} or \reviewerTwo{$\mathbb{D} = \mathbb{D}_{\text{OC}}$ in} \eqref{eq:continuous-dissipation-Dirichlet}.
    The constant $c_{\varepsilon, \mathbf{\Omega}}$ depends only on $\varepsilon$, the measure of $\mathbf{\Omega}$, the boundary data via the norms $\|\varphi^D\|_{W^{1,\infty}}$ and $\|\psi^D\|_{W^{1,\infty}}$, $z_{\ions}^2$, and the dimensionless parameters $\delta_\electrons$, $\delta_\holes$ and $\nu$.
    Under the assumption of constant boundary data,  i.e., $\nabla\varphi^D = \nabla\psi^D = \mathbf{0} $, the right-hand side of the inequality vanishes. \QEDB
\end{theorem}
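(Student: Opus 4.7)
\medskip
\noindent\textbf{Proof proposal.} The plan is to differentiate $\mathbb{E}(t)$ in time, use the continuity equations \eqref{eq:model-cont-eq-n-dimless-memristor}--\eqref{eq:model-cont-eq-a-dimless-memristor}, the Poisson equation \eqref{eq:model-poisson-dimless-memristor}, and integration by parts to match most of the resulting terms with $-\mathbb{D}(t)$, and then to control a residual that only involves derivatives of the boundary data via Young's inequality and the superlinearity of the entropy functions. The key algebraic identity behind the whole computation is the reformulation of the chemical potential
\begin{equation*}
    \Phi_\alpha'(n_\alpha) = \mathcal{F}_\alpha^{-1}(n_\alpha) = z_\alpha(\varphi_\alpha-\psi), \qquad \alpha\in\{\electrons,\holes,\ions\},
\end{equation*}
which converts each entropy derivative into an expression involving $\varphi_\alpha$ and $\psi$.

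First, using \eqref{eq:entropyfunc} and the chain rule, for each species I rewrite
\begin{equation*}
\partial_t \Phi_\ions(n_\ions) = z_\ions(\varphi_\ions-\psi)\,\partial_t n_\ions,
\quad
\partial_t H_\alpha(n_\alpha,n_\alpha^D) = z_\alpha\bigl[(\varphi_\alpha-\psi)-(\varphi^D-\psi^D)\bigr]\partial_t n_\alpha, \ \alpha\in\{\electrons,\holes\},
\end{equation*}
substitute the continuity equations (note $\nu z_\alpha \partial_t n_\alpha = -\nabla\cdot\mathbf{j}_\alpha$ for $\alpha=\electrons,\holes$ and $z_\ions\partial_t n_\ions=-\nabla\cdot\mathbf{j}_\ions$), and integrate by parts. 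The Neumann conditions \eqref{eq:memristor-Neumann-BC-dimless}--\eqref{eq:memristor-BC-anion-dimless} kill boundary contributions on $\mathbf{\Gamma}^N$ and on all of $\partial\mathbf{\Omega}$ for the ions; on $\mathbf{\Gamma}^C$ the factor $(\varphi_\alpha-\psi)-(\varphi^D-\psi^D)$ vanishes identically in the ohmic case \eqref{eq:memristor-Dirichlet-BC-dimless}, while in the Schottky case it equals $z_\alpha^{-1}(\mathcal{F}_\alpha^{-1}(n_\alpha)-\mathcal{F}_\alpha^{-1}(n_\alpha^D))$ and combines with the Robin flux \eqref{eq:memristor-Schottky-BC-dimless-n-p} to produce exactly the boundary dissipation terms in \eqref{eq:continuous-dissipation-Schottky}. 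For the electrostatic term, since $\hat\psi^D$ is time-independent and $\psi-\hat\psi^D=0$ on $\mathbf{\Gamma}^C$ (while $\partial_t\psi$ satisfies $\nabla\partial_t\psi\cdot\boldsymbol\nu=0$ on $\mathbf{\Gamma}^N$), differentiating the Poisson equation in time yields
\begin{equation*}
\lambda^2\!\int_{\mathbf{\Omega}}\!\nabla(\psi-\hat\psi^D)\cdot\nabla\partial_t\psi\,d\mathbf{x}
= \int_{\mathbf{\Omega}} (\psi-\hat\psi^D)\bigl(\delta_\electrons z_\electrons\partial_t n_\electrons+\delta_\electrons\delta_\holes z_\holes\partial_t n_\holes+z_\ions\partial_t n_\ions\bigr)d\mathbf{x}.
\end{equation*}
Adding everything, the $\psi$-dependent contributions telescope (the $\nabla\psi\cdot\mathbf{j}_\alpha$ pieces coming from the entropy derivatives cancel against the $\nabla\psi\cdot\mathbf{j}_\alpha$ pieces coming from the Poisson term), and since $\nabla(\psi^D-\hat\psi^D)=\mathbf{0}$ in both boundary regimes the remaining gradients are only $\nabla\varphi_\alpha$, $\nabla\varphi^D$ and $\nabla\hat\psi^D$. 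Using $\mathbf{j}_\alpha=-z_\alpha^2 n_\alpha\nabla\varphi_\alpha$ this yields
\begin{equation*}
\frac{d}{dt}\mathbb{E}(t) = -2\mathbb{D}_v(t) - \mathbb{D}_b(t) + R(t),
\end{equation*}
where $\mathbb{D}_v$ and $\mathbb{D}_b$ are the volume and boundary parts of $\mathbb{D}$, and the residual is
\begin{equation*}
R(t) = z_\ions^2\!\int_{\mathbf{\Omega}}\! n_\ions\nabla\hat\psi^D\!\cdot\!\nabla\varphi_\ions\,d\mathbf{x}
+\tfrac{\delta_\electrons}{\nu}\!\int_{\mathbf{\Omega}}\! n_\electrons\nabla\varphi^D\!\cdot\!\nabla\varphi_\electrons\,d\mathbf{x}
+\tfrac{\delta_\electrons\delta_\holes}{\nu}\!\int_{\mathbf{\Omega}}\! n_\holes\nabla\varphi^D\!\cdot\!\nabla\varphi_\holes\,d\mathbf{x}.
\end{equation*}
In particular $\frac{d}{dt}\mathbb{E}+\mathbb{D} = -\mathbb{D}_v + R$.

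It remains to estimate $R$. Applied to each term, Young's inequality with parameter $\beta>0$ gives
\begin{equation*}
\Bigl|\!\int_{\mathbf{\Omega}}\! n_\alpha\nabla f\!\cdot\!\nabla\varphi_\alpha\,d\mathbf{x}\Bigr|
\leq \beta\!\int_{\mathbf{\Omega}}\! n_\alpha|\nabla\varphi_\alpha|^2 d\mathbf{x} + \tfrac{1}{4\beta}\|f\|_{W^{1,\infty}}^2\!\int_{\mathbf{\Omega}}\! n_\alpha\,d\mathbf{x},
\end{equation*}
with $f\in\{\hat\psi^D,\varphi^D\}$. Choosing $\beta$ small enough to absorb the first term into half of $\mathbb{D}_v$, one is left with $\int_{\mathbf{\Omega}} n_\alpha d\mathbf{x}$ which must be controlled by $c_{\varepsilon,\mathbf{\Omega}} + \varepsilon\mathbb{E}(t)$. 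For the ions this is immediate from \eqref{hyp:statistics-a}, since $n_\ions=\mathcal{F}_\ions(\cdot)\in(0,1)$ implies $\int_{\mathbf{\Omega}} n_\ions d\mathbf{x}\leq|\mathbf{\Omega}|$. For electrons and holes, the superlinearity of $\Phi_\alpha$ (a consequence of $\mathcal{F}_\alpha(\eta)\leq e^\eta$, whence $\Phi_\alpha'(x)\geq \log x$) yields the de la Vallée-Poussin type bound: for every $\mu>0$ there is $C_\mu$ (depending on $\|\varphi^D\|_{W^{1,\infty}},\|\psi^D\|_{W^{1,\infty}}$ through $n_\alpha^D$) such that $x\leq \mu H_\alpha(x,n_\alpha^D(\mathbf{x}))+C_\mu$ for all $x\geq 0$, uniformly in $\mathbf{x}\in\mathbf{\Omega}$. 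Integrating and choosing $\mu$ small enough produces $\int_{\mathbf{\Omega}}n_\alpha d\mathbf{x}\leq \varepsilon'\mathbb{E}(t)+c'_{\varepsilon',\mathbf{\Omega}}$. Combining these bounds yields $R\leq \mathbb{D}_v + c_{\varepsilon,\mathbf{\Omega}} + \varepsilon\mathbb{E}(t)$, which gives the claim. When $\nabla\varphi^D=\nabla\psi^D=\mathbf{0}$ the residual $R$ vanishes identically and we even obtain $\frac{d}{dt}\mathbb{E}+\mathbb{D}\leq 0$.

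The main obstacle is the third paragraph: one must handle the residual $R$ and in particular control $\int_{\mathbf{\Omega}}n_\electrons$ and $\int_{\mathbf{\Omega}}n_\holes$ by the entropy. This is precisely where the hypothesis $\mathcal{F}_\alpha(\eta)\leq e^\eta$ in \eqref{hyp:statistics-n-p} enters, via the superlinear growth of $\Phi_\alpha$; all the earlier algebra (time-differentiation, substitution of the continuity and Poisson equations, and the cancellation of $\psi$-terms) is essentially bookkeeping once the identity $\Phi_\alpha'(n_\alpha)=z_\alpha(\varphi_\alpha-\psi)$ is exploited.
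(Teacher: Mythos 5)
Your proposal is correct, and it follows exactly the argument the paper intends: the paper omits the proof of \Cref{thm:cont-E-D-memristor}, deferring to the analogous Theorem 3.3 of \cite{Abdel2023Existence}, and that proof is precisely the computation you carry out — differentiate $\mathbb{E}$, use $\Phi_\alpha'(n_\alpha)=z_\alpha(\varphi_\alpha-\psi)$ together with the continuity and time-differentiated Poisson equations to cancel the $\nabla\psi$ contributions and recover $-\mathbb{D}$ plus a residual in $\nabla\varphi^D,\nabla\psi^D$, then absorb the residual via Young's inequality, the bound $n_\ions<1$, and the superlinearity of $\Phi_\electrons,\Phi_\holes$ guaranteed by \eqref{hyp:statistics-n-p}. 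Your treatment of the boundary terms (vanishing of the $H_\alpha$-factor on $\mathbf{\Gamma}^C$ in the ohmic case, and its identification with $z_\alpha^{-1}(\mathcal{F}_\alpha^{-1}(n_\alpha)-\mathcal{F}_\alpha^{-1}(n_\alpha^D))$ producing the Robin dissipation terms in the Schottky case) is exactly the adaptation needed for the two boundary models considered here.
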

The proof follows similarly to the proof of \cite[Theorem 3.3]{Abdel2023Existence} and is therefore omitted here.

\section{Discretization and analysis of charge transport equations}
\label{sec:discrete_model}

In this section, we begin by formulating the implicit-in-time finite volume scheme for the model \eqref{eq:model-dimless-memristor}, incorporating either the Schottky \eqref{eq:memristor-Schottky-BC-dimless} or the ohmic boundary conditions \eqref{eq:memristor-Dirichlet-BC-dimless}.
We pay particular attention to the discretization of the boundary conditions by eliminating the boundary unknowns.
Then, in \Cref{sec:existence}, we establish that an entropy-dissipation relation can also be derived in the discrete framework, enabling us to prove the existence of discrete solutions.

\subsection{Discrete version of charge transport model}
\label{sec:FVM}

\subsubsection{Definition of discretization mesh} \label{sec:fvm-meshes}

\reviewerTwo{
    Let $\mathbf{\Omega} \subset \mathbb{R}^d$ be a polygonal (or polyhedral) domain. We consider an \emph{admissible finite volume mesh} in the sense of~\cite{Eymard.2000} and described in more detail in \cite{Abdel2024Thesis, Abdel2023Existence}, defined by the triplet $(\mathcal{T}, \mathcal{E}, \{\mathbf{x}_K\}_{K \in \mathcal{T}})$, where:
    \begin{enumerate}
        \item $\mathcal{T}$ is a family of non-empty, convex and open control volumes $K$, whose Lebesgue measure is denoted by $m_K$ and satisfy $ \mathbf{\overbar{\Omega}} = \bigcup_{K \in \mathcal{T}} \overbar{K}$.
        \item $\mathcal{E}$ is a family of faces $\sigma$ with strictly positive $(d-1)$-dimensional measure $m_\sigma $.
        We use the abbreviation $\sigma = K | L = \partial K \cap \partial L$ for the intersection between two different control volumes $K \neq L$.
        Each cell $K \in \mathcal{T}$ has an associated subset of faces $\mathcal{E}_K \subset \mathcal{E}$.
        We define the subsets $\mathcal{E}^C$ and $\mathcal{E}^N$ for boundary faces lying on $\boldsymbol{\Gamma}^C$ and $\boldsymbol{\Gamma}^N$, respectively, and assume that $\mathbf{\Gamma}^C, \mathbf{\Gamma}^N$ can be well described by the union of such faces.
        \item \reviewerOne{Each $K \in \mathcal{T}$ has an associated node $\mathbf{x}_K \in \overline{K}$.
        If $K$ and  $L$ share a face $\sigma =K | L$, then $\mathbf{x}_K \neq \mathbf{x}_L $ and the segment $\overline{\mathbf{x}_K \mathbf{x}_L}$ is orthogonal to $\sigma$.
        In this case, we define $d_\sigma$ as the Euclidean distance between the two nodes $\mathbf{x}_K$ and $\mathbf{x}_L$.}
        If $\sigma = \partial K \cap \partial \mathbf{\Omega} \neq \emptyset$, then $d_\sigma$ is defined as the Euclidean distance between $\mathbf{x}_K$ and the affine hyperplane spanned by $\sigma$.
\end{enumerate}
}

\reviewerTwo{
    Lastly, we introduce the transmissibility through the edge $\sigma$ by $\tau_\sigma = m_\sigma / d_\sigma$.
    The notations are illustrated in \Cref{fig:control-vol}.
    In addition to the admissibility, we assume that the mesh $\left( \mathcal{T}, \mathcal{E}, \lbrace\mathbf{x}_K\rbrace_{K \in \mathcal{T}}\right)$ is regular in the sense of \cite[p.\ 16]{Abdel2023Existence}.
    This regularity is an asymptotic property required to establish convergence results.
}

\reviewerOne{
    In the following numerical analysis, we assume $d_\sigma>0$ for all edges $\sigma$.
    In particular, we assume that all nodes satisfy $\mathbf{x}_K \notin \partial \mathbf{\Omega}$.
    In Remark~\ref{rem.centeronboundary}, we explain how the scheme extends naturally to the case $\mathbf{x}_K \in \partial \mathbf{\Omega}$ for $K \in \mathcal{T}$ with $\mathcal{E}_K \cap \partial \mathbf{\Omega} \neq \emptyset$.
    This type of mesh will be used in the simulations in \Cref{sec:numerics}.
}

For the time discretization we decompose the interval $[0, t_F]$, for a given end time $t_F>0$ into a finite and increasing number of time steps
$
0 = t^1 < \ldots < t^M = t_F
$
with a step-size $\tau^m = t^{m} - t^{m-1}$ at time step $m = 2,\dots, M$.
\begin{figure}[ht]
    \centering
    \includegraphics[width=0.92\textwidth]{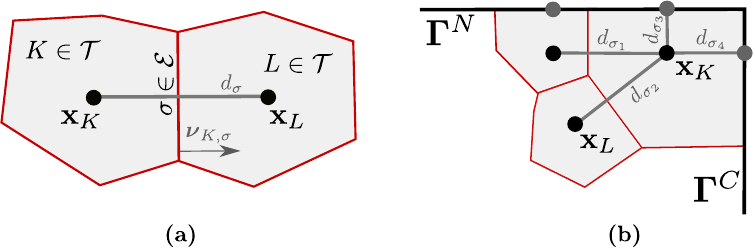}
    \caption
    {
        Neighboring control volumes in \textbf{(a)} the interior of the device domain and \textbf{(b)} near outer boundaries $\mathbf{\Gamma}^C$ and $\mathbf{\Gamma}^N$ (right).
        For our numerical analysis, we assume that the cell centers (black points) of a boundary control volume are located in the interior of the computational domain.
        The boundary of the control volumes are highlighted in red.
        }
    \label{fig:control-vol}
\end{figure}


\subsubsection{Finite volume discretization} \label{sec:FVM-discret}

Next, we formulate the implicit-in-time finite volume discretizations for the charge transport model \eqref{eq:model-dimless-memristor}, considering the initial condition \eqref{eq:initial-cond-memristor} and the boundary conditions \eqref{eq:memristor-BC}.
Specifically, for the  Schottky boundary model, we incorporate the conditions \eqref{eq:memristor-Schottky-BC-dimless}, while for the ohmic boundary conditions, we apply \eqref{eq:memristor-Dirichlet-BC-dimless}.
In the following, the quantity $u_K^{m}$ represents an approximation of the mean value of $u(\mathbf{x}, t)$ on the cell $K$ at time $t^{m}$ and the quantity $u_\sigma^m$ stands for an approximation of the mean value of $u(\mathbf{x}, t)$ on the boundary face $\sigma \in \mathcal{E}^C$ at time $t^m$.
Here, $u$ is one of the potentials $\varphi_{\electrons}, \varphi_{\holes}$, $\varphi_\ions$, $\psi$.
With this, we can define the vectors of unknowns $\boldsymbol{u}^m= (u_K^{m})_{K\in{\mathcal T}}$
and
$ (u_\sigma^{m})_{\sigma\in{\mathcal E}^C}$.
The second vector of unknowns will be eventually eliminated.
The discretizations of the doping profile $C \in L^{\infty} ( \mathbf{\Omega})$, the boundary data $\varphi^D$, $\psi^D \in W^{1, \infty} (\mathbf{\Omega})$, and the initial conditions  $\varphi_\electrons^0, \varphi_\holes^0, \varphi_\ions^0 \in L^{\infty} (\mathbf{\Omega})$ are expressed as integral averages over a cell $K$
\begin{subequations} \label{eq:def-integral-average}
    \begin{align}
        \chi_K = \frac{1}{m_K}\int_K \chi(\mathbf{x}) d \mathbf{x}, \quad K\in\mathcal{T},\ \chi = C,\, \psi^D, \, \varphi^D, \, \varphi_{\electrons}^0, \, \varphi_{\holes}^0 \, \text{  or  } \, \varphi_{\ions}^0.
    \end{align}
    For the boundary data, we define the vectors $\boldsymbol{\psi}^D= (\psi_K^{D})_{K\in{\mathcal T}}$ and $\boldsymbol{\varphi}^D= (\varphi_K^{D})_{K\in{\mathcal T}}$.
    Analogously, we define the vectors ${{\boldsymbol \varphi}}_{\electrons}^0$, ${{\boldsymbol \varphi}}_{\holes}^0$ and ${{\boldsymbol \varphi}}_{\ions}^0$ for the initial conditions.
    Furthermore, for a face $\sigma \in \mathcal{E}^C$ located at the contact boundary $\mathbf{\Gamma}^C$ the discrete values for the boundary functions $\psi^D, \varphi^D $ on a face $\sigma$ are given as $(d-1)$-dimensional integral averages
    \begin{align} \label{eq:boundary-average-Dirichlet-val}
        \chi_\sigma = \frac{1}{m_\sigma}\int_\sigma \chi(\gamma) d \gamma, \quad \sigma\in\mathcal{E}^C,\  \chi = \psi^D,\, \varphi^D.
    \end{align}
\end{subequations}
For the Schottky boundary model \eqref{eq:memristor-Schottky-BC-dimless}, we define the discrete vectors $ \boldsymbol{\tilde{\psi}}^D = \boldsymbol{\psi}^D + \mathbf{\overline{V}} $, where $\mathbf{\overline{V}} = (\overline{V})_{K\in{\mathcal T}}$.
The vector $ \boldsymbol{\tilde{\psi}}^D$ corresponds to the discrete boundary values of the electric potential \eqref{eq:memristor-Schottky-BC-dimless-psi}, while $\boldsymbol{\psi}^D$ enters the discrete densities in the electron and hole boundary conditions \eqref{eq:memristor-Schottky-BC-dimless-n-p}.
Moreover, we set $ \tilde{\psi}^D_\sigma = \psi^D_\sigma + \overline{V}$.
The discrete densities can be calculated via the dimensionless state equation \eqref{eq:state-eq-dimless-memristor} inside the domain and at the contact boundary, namely,
\begin{subequations}\label{eq:discrete-state-eq}
    \begin{align}
        {\boldsymbol n}_{\alpha}^m &= \mathcal{F}_\alpha \left(z_\alpha ({\boldsymbol \varphi}_{\alpha}^m - {\boldsymbol \psi}^m)\right), && \alpha \in \{ \electrons, \holes, \ions \},\ m\in\mathbb{N},\label{eq:discrete-state-eq-m}\\
        ~
        n_{\alpha, \sigma}^m & = \mathcal{F}_\alpha \left(z_\alpha (\varphi_{\alpha, \sigma}^m - \psi_\sigma^m)\right), && \alpha \in \{ \electrons, \holes, \ions  \}, \ m\in\mathbb{N}, \ \sigma \in \mathcal{E}^C, \label{eq:discrete-state-eq-boundary}\\
        ~
        {\boldsymbol n}_{\alpha}^D &= \mathcal{F}_\alpha \left(z_\alpha ({\boldsymbol \varphi}^D - {\boldsymbol \psi}^D)\right), && \alpha \in \{ \electrons, \holes \},\label{eq:discrete-state-eq-Dirichlet-inside}\\
        ~
        n_{\alpha, \sigma}^D &= \mathcal{F}_\alpha \left(z_\alpha (\varphi_{\sigma}^D - \psi^D_\sigma)\right), && \alpha \in \{ \electrons, \holes  \},\ \sigma \in \mathcal{E}^C,\label{eq:discrete-state-eq-Dirichlet}
    \end{align}
\end{subequations}
where the statistics function is applied pointwise in \eqref{eq:discrete-state-eq-m} and \eqref{eq:discrete-state-eq-Dirichlet-inside}.
The first two definitions \eqref{eq:discrete-state-eq-m}, \eqref{eq:discrete-state-eq-boundary} depend on the vectors of unknowns, while the latter two \eqref{eq:discrete-state-eq-Dirichlet-inside}, \eqref{eq:discrete-state-eq-Dirichlet} depend on the given boundary functions.
Furthermore, we introduce the finite difference operator acting on vectors ${\boldsymbol u} = (u_K)_K$, denoted by $D_{K,\sigma}$.
It is given by
\begin{equation}\label{eq:difference-operator}
    D_{K,\sigma}{\boldsymbol u} = \left\{
    \begin{array}{ll}
        u_L - u_K, &\text{if }\sigma = K|L,\\
        u_\sigma - u_K, &\text{if }\sigma\in\mathcal{E}^C, \boldsymbol{u} \neq \boldsymbol{\varphi}_\ions, \boldsymbol{n}_\ions \\
        0, &\text{otherwise.}
    \end{array}
    \right.
\end{equation}
Due to small parameters, such as the rescaled Debye length $\lambda$, the stiffness in drift-diffusion models favors fully implicit time discretization methods.
Among these, the implicit Euler method offers robust discretization and preserves asymptotic properties \cite{Bessemoulin.2014, ChainaisHillairet.2019}.
Consequently, it is the method of choice in this work.
For the finite volume discretization of the model, we proceed as follows:
We integrate the bulk equations \eqref{eq:model-dimless-memristor} over each control volume  $K \in \mathcal{T}$, apply the divergence theorem to the current densities, and break down the surface integrals into a sum of integrals over the faces $\sigma \in \mathcal{E}_K$.
This yields for $m \in \mathbb{N}$
\begin{subequations} \label{eq:discrete-model-memristor}
    \begin{align}
        \nu z_{\electrons} m_K \frac{n_{\electrons,K}^{m} - n_{\electrons,K}^{m-1} }{\tau^m} + \sum_{\sigma \in \mathcal{E}_K} J_{\electrons,K, \sigma}^m &= 0, && K\in\mathcal{T}, \label{eq:discrete-mass-balance-elec-memristor}\\
        ~
        \nu z_{\holes} m_K \frac{n_{\holes,K}^{m} - n_{\holes,K}^{m-1} }{\tau^m} + \sum_{\sigma \in \mathcal{E}_K} J_{\holes,K, \sigma}^m &= 0,&& K\in\mathcal{T}, \label{eq:discrete-mass-balance-holes-memristor}\\
        ~
        z_{\ions} m_K \frac{n_{\ions,K}^{m} - n_{\ions,K}^{m-1} }{\tau^m} + \sum_{\sigma \in \mathcal{E}_K} J_{\ions,K, \sigma}^m &=  0,&& K\in\mathcal{T}, \label{eq:discrete-mass-balance-anions-memristor}
    \end{align}
    which are coupled to the discrete Poisson equation which reads for $m \in \mathbb{N}$ and $K\in\mathcal{T}$
    \begin{equation}\label{eq:model-poisson-discrete-memristor}
        \begin{split}
            - \lambda^2\sum_{\sigma \in \mathcal{E}_K}\tau_\sigma D_{K,\sigma}\boldsymbol{\psi}^m
            ~
            =&
            ~
            \delta_\electrons m_K \Bigl( z_{\electrons}n_{\electrons,K}^{m} +   \delta_\holes ( z_{\holes}n_{\holes,K}^{m}  + z_C C_K ) \Bigr)\\
            &+ m_K z_{\ions}n_{\ions, K}^{m}.
        \end{split}
    \end{equation}
\end{subequations}
We assume for $\alpha \in \{ \electrons, \holes, \ions \}$ that the discrete current densities $J_{\alpha, K, \sigma}^m$ are locally conservative and consistent approximations of $ \int_\sigma \, \mathbf{j}_\alpha \cdot \boldsymbol{\nu}_{K, \sigma} \,d S$,
where $\boldsymbol{\nu}_{K, \sigma}$ is the outward-pointing unit normal to the control volume $K$ on the face $\sigma$.
We mean with locally conservative that for $\sigma =K|L$ the flux approximation shall satisfy
\begin{equation}\label{eq:conservativity}
    J_{\alpha,K, \sigma}^m + J_{\alpha,L, \sigma}^m
    = 0,
    \quad \alpha \in \{ \electrons, \holes, \ions \}.
\end{equation}
The finite difference operator $D_{K,\sigma}$, defined in \eqref{eq:difference-operator}, is also locally conservative.
Choosing the discrete current densities $J_{\alpha, K, \sigma}^m$ is delicate, as an incorrect choice can cause instability or violate thermodynamic principles.
In the following, we use the excess chemical potential approximation \cite{Yu.1988} as two-point flux approximation (TPFA) scheme for $J_{\alpha,K, \sigma}^m$ which was numerically analyzed in \cite{Cances.2021, Gaudeul.2021, Abdel2023Existence} and compared in \cite{Kantner.2020, Abdel.2021b}.
This flux discretization scheme reads for $\alpha \in \{ \electrons, \holes, \ions \}$
\begin{subequations} \label{eq:discrete-Sedan}
    \begin{equation}\label{eq:discrete-flux-elec-holes}
        J_{\alpha,K, \sigma}^m = \left\{
        \begin{array}{ll}
            \!\!\! - z_\alpha \tau_\sigma \Bigl( B\left(- Q_{\alpha, K, \sigma}^m \right) n_{\alpha, L}^m - B\left( Q_{\alpha, K, \sigma}^m \right) n_{\alpha, K}^m\Bigr), &\text{if }\sigma = K|L,\\[1.4ex]
            ~
            \!\!\!\!\! - z_\alpha \tau_\sigma \Bigl( B\left(- Q_{\alpha, K, \sigma}^m \right) n_{\alpha, \sigma}^m - B\left( Q_{\alpha, K, \sigma}^m \right) n_{\alpha, K}^m\Bigr), &\text{if }\sigma\in\mathcal{E}^C,  \alpha \neq \ions, \\[1.4ex]
            ~
            \!\!\!  0, &\text{otherwise},
        \end{array}
        \right.
    \end{equation}
    where the quantity $Q_{\alpha, K, \sigma}^m$ is defined as
    \begin{equation}\label{eq:inside-Bernoulli}
        Q_{\alpha, K, \sigma}^m = D_{K,\sigma} \left( z_\alpha{\boldsymbol \varphi}_{\alpha}^m - \log {\boldsymbol n}_{\alpha}^m \right).
    \end{equation}
\end{subequations}
In the previous formula, the logarithm is applied componentwise.
Moreover, the function $B$ denotes the Bernoulli function
\begin{equation} \label{eq:bernoulli}
    B(x) = \frac{x}{\exp(x) -1},\; \text{for}\; x \in \mathbb{R} \setminus \{0\} \quad \text{and} \quad B(0)=1.
\end{equation}
We now address the discretization of the boundary conditions.
Notably, the no-flux boundary conditions for the rest of the domain boundary have already been implicitly incorporated through the last conditions in \eqref{eq:difference-operator} and \eqref{eq:discrete-flux-elec-holes}.
For the ionic defects, the no-flux boundary condition \eqref{eq:memristor-BC-anion-dimless} and the thermodynamic consistency of the flux discretization scheme \eqref{eq:discrete-Sedan}, allows the ionic defect boundary unknown to be expressed as
\begin{align} \label{eq:boundary-unknown-ion}
\varphi_{\ions, \sigma}^{m} := \varphi_{\ions, K}^{m}, \quad \text{for all } \sigma \in \mathcal{E}^C {\text{ s.t. }\sigma\subset \overline{K}}.
\end{align}
Note that the previous formula is not necessary to define the scheme as the no-flux boundary conditions are implicitly contained in the third condition in \eqref{eq:discrete-flux-elec-holes}.
In case of ohmic boundary conditions \eqref{eq:memristor-Dirichlet-BC-dimless}, the boundary unknowns are set to the given Dirichlet functions
\begin{equation} \label{eq:boundary-ohmic}
       \varphi_{\electrons, \sigma}^{m} = \varphi_{\holes, \sigma}^{m}  := \varphi_{\sigma}^{D}, \quad \psi_\sigma^{m} := \psi_\sigma^D, \quad \text{for all } \sigma \in \mathcal{E}^C,
\end{equation}
where the discrete boundary values $\varphi_{\sigma}^{D}$, $\psi_\sigma^D$ are defined in \eqref{eq:boundary-average-Dirichlet-val}.
For Schottky contacts \eqref{eq:memristor-Schottky-BC-dimless}, we analogously define the discrete electric potential unknown at the boundary as
\begin{equation} \label{eq:Schottky-psi-sigma}
\psi_\sigma^{m} := \tilde{\psi}_\sigma^D = \psi_\sigma^D + \overline{V} , \quad \text{for all } \sigma \in \mathcal{E}^C.
\end{equation}
Furthermore, for Schottky contacts, the discrete approximation of $ \int_\sigma \, \mathbf{j}_\alpha \cdot \boldsymbol{\nu}_{K, \sigma} \,d S$ for $\sigma \in \mathcal{E}^C$ and $\alpha \in \{ \electrons, \holes \}$ is given by
\begin{equation} \label{eq:discrete-flux-boundary}
J_{\alpha,K, \sigma}^m =  z_\alpha v_\alpha m_\sigma (n_{\alpha, \sigma}^m - n_{\alpha, \sigma}^D), \quad \text{for all } \sigma \in \mathcal{E}^C,
\end{equation}
where we incorporate the condition \eqref{eq:memristor-Schottky-BC-dimless-n-p}.
\reviewerTwo{
    Higher-order methods in space have been investigated in related contexts, for example, in \cite{Moatti2024, Moatti2023structure}, where hybrid finite volume methods were applied to simpler drift-diffusion type models.
}
\subsubsection{Elimination of boundary values} \label{sec:discrete-boundary-values}
Rather than solving for the unknowns at the cell centers $\boldsymbol{\varphi}_\electrons^m$, $\boldsymbol{\varphi}_\holes^m$, $\boldsymbol{\varphi}_\ions^m$, $\boldsymbol{\psi}^m$ and on the boundary faces $ (\varphi_{\electrons, \sigma}^{m})_{\sigma\in{\mathcal E}^C}$, $(\varphi_{\holes, \sigma}^{m})_{\sigma\in{\mathcal E}^C}$, $(\varphi_{\ions, \sigma}^{m})_{\sigma\in{\mathcal E}^C}$, $(\psi_{\sigma}^{m})_{\sigma\in{\mathcal E}^C}$,  the boundary unknowns can be expressed in terms of neighboring cell unknowns.
The expression \eqref{eq:boundary-unknown-ion}, \eqref{eq:boundary-ohmic} and \eqref{eq:Schottky-psi-sigma} allow to eliminate the unknowns trivially.
In the case of Schottky contacts, the boundary densities $(n_{\electrons, \sigma}^{m})_{\sigma\in{\mathcal E}^C}$ and $(n_{\holes, \sigma}^{m})_{\sigma\in{\mathcal E}^C}$ are characterized implicitly not only by the discrete flux expression \eqref{eq:discrete-flux-elec-holes} but also by the discrete Schottky boundary condition \eqref{eq:discrete-flux-boundary}.
Setting both expressions equal, yields for all $\sigma \in \mathcal{E}^C$ and $\alpha\in\{\electrons, \holes\}$
\begin{equation}\label{eq:implicit_eq_boundary}
    - z_\alpha \tau_\sigma \Bigl( B\left(- Q_{\alpha, K, \sigma}^m \right) n_{\alpha, \sigma}^m - B\left( Q_{\alpha, K, \sigma}^m \right) n_{\alpha, K}^m\Bigr)
    =
    z_\alpha v_\alpha m_\sigma (n_{\alpha, \sigma}^m - n_{\alpha, \sigma}^D).
\end{equation}
In the next lemma we show that this equation has a unique solution which defines the remaining boundary unknowns.
\begin{lemma} \label{lem:Z-zero}
    Let $\alpha \in \{ \electrons, \holes \}$.
    Moreover, let $K \in \mathcal{T}$ and $\sigma \in \mathcal{E}^C$, where $\mathcal{E}^C \cap \overline{K} \neq \emptyset$.
    We define
    \begin{equation*}
        Q_{\alpha, K, \sigma}^m (s) = z_\alpha D_{K, \sigma} \boldsymbol{\psi}^m + \left( \mathcal{F}_\alpha^{-1}(s) - \log(s) \right) - \left(\mathcal{F}_\alpha^{-1}(n_{\alpha, K}^m) - \log(n_{\alpha, K}^m) \right).
    \end{equation*}
    The function
    \begin{equation*}
        \begin{split}
        Z_{\alpha, K}^m (s) =
        &- \tau_\sigma \Bigl(  B \bigl(-Q_{\alpha, K, \sigma}^m(s)\bigr) s - B\bigl(Q_{\alpha, K, \sigma}^m (s) \bigr) n_{\alpha, K}^m \Bigr)\\
        &- v_\alpha m_\sigma s + v_\alpha m_\sigma n_{\alpha, \sigma}^D
        \end{split}
    \end{equation*}
    has a unique and positive zero that we denote by $n_{\alpha, \sigma}^m$.
\end{lemma}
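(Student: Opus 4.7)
The plan is to establish that $Z_{\alpha, K}^m$ is continuous and strictly decreasing on $(0, \infty)$, with $\lim_{s \to 0^+} Z_{\alpha, K}^m(s) > 0$ and $\lim_{s \to +\infty} Z_{\alpha, K}^m(s) = -\infty$, so that existence and uniqueness of a positive zero follow at once from the intermediate value theorem.

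Continuity of $Z_{\alpha, K}^m$ on $(0,\infty)$ is immediate from the continuity of $B$, $\mathcal{F}_\alpha^{-1}$ and $\log$. For strict monotonicity I would differentiate in $s$. Writing $Q(s) := Q_{\alpha, K, \sigma}^m(s)$, its derivative $Q'(s) = (\mathcal{F}_\alpha^{-1})'(s) - 1/s$ is non-negative by the bound $(\mathcal{F}_\alpha^{-1})'(s) \geq 1/s$ recorded in \eqref{eq:statistics-ineq}. Together with the fact that $B$ is strictly positive and strictly decreasing on $\mathbb{R}$, a direct computation yields
\begin{equation*}
(Z_{\alpha, K}^m)'(s) = \tau_\sigma B'(-Q(s)) Q'(s)\, s + \tau_\sigma B'(Q(s)) Q'(s)\, n_{\alpha, K}^m - \tau_\sigma B(-Q(s)) - v_\alpha m_\sigma,
\end{equation*}
in which each of the four contributions is non-positive and the third is strictly negative, giving $(Z_{\alpha, K}^m)'(s) < 0$.

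The most delicate point, and the one I expect to require the most care, is the asymptotic analysis. The convenient algebraic tools are the classical identity $B(-x) = e^x B(x)$ together with the decomposition $Q(s) = A + \mathcal{F}_\alpha^{-1}(s) - \log s$, where the constant $A = z_\alpha D_{K,\sigma}\boldsymbol{\psi}^m - \mathcal{F}_\alpha^{-1}(n_{\alpha, K}^m) + \log n_{\alpha, K}^m$ is independent of $s$. These let one rewrite the nonlinear boundary term as $B(-Q(s))\, s = e^A e^{\mathcal{F}_\alpha^{-1}(s)} B(Q(s))$. The hypothesis $\mathcal{F}_\alpha(\eta) \leq \exp(\eta)$ from \eqref{hyp:statistics-n-p} implies $Q(s) \geq A$, hence $B(Q(s))$ stays bounded. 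Since $\mathcal{F}_\alpha^{-1}(s) \to -\infty$ as $s \to 0^+$, this term vanishes in that limit; positivity of $\lim_{s \to 0^+} Z_{\alpha, K}^m(s)$ then follows either from the strictly positive contribution $v_\alpha m_\sigma n_{\alpha, \sigma}^D$ when $v_\alpha > 0$, or, in the degenerate case $v_\alpha = 0$, by factoring $Z_{\alpha, K}^m(s) = \tau_\sigma B(Q(s))\bigl(n_{\alpha, K}^m - e^A e^{\mathcal{F}_\alpha^{-1}(s)}\bigr)$ and noting that the second factor tends to $n_{\alpha, K}^m > 0$.

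For the limit at $+\infty$, the linear term $-v_\alpha m_\sigma s$ forces $Z_{\alpha, K}^m \to -\infty$ whenever $v_\alpha > 0$, since the remaining three terms stay bounded above. In the degenerate case $v_\alpha = 0$, the same factorization handles the limit: $\mathcal{F}_\alpha^{-1}(s) \to +\infty$ makes the bracket diverge to $-\infty$, while $B(Q(s)) > 0$ ensures the sign is preserved. Combined with strict monotonicity and continuity, the intermediate value theorem then yields a unique positive root, which defines $n_{\alpha, \sigma}^m$.
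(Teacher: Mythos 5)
Your proof is correct and follows essentially the same route as the paper's: continuity, strict monotonicity via the same derivative computation using $(\mathcal{F}_\alpha^{-1})'(s)\geq 1/s$ and $B'\leq 0$, and a sign analysis at $s\to 0^+$ and $s\to+\infty$ based on $\mathcal{F}_\alpha^{-1}(s)\geq\log(s)$ (the paper bounds $B(\pm Q(s))$ by constants directly, whereas you route the same facts through the identity $B(-x)=e^{x}B(x)$, a cosmetic difference). The one point to tighten is the degenerate sub-case $v_\alpha=0$ as $s\to 0^+$, where your factorization also requires $B(Q(s))$ to stay bounded away from zero; this follows from the monotonicity $Q'(s)\geq 0$ you already established, which gives $Q(s)\leq Q(1)$ and hence $B(Q(s))\geq B(Q(1))>0$ on $(0,1]$, exactly the bound the paper records as $\underline{B}_{+}$.
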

\begin{proof}
    The claim is established by demonstrating the following:
    i) As $s\rightarrow + \infty $, we have $Z_{\alpha, K}^m(s) \rightarrow - \infty$
    ii) $Z_{\alpha, K}^m$ approaches a non-negative limit as $s\rightarrow 0$
    iii) $Z_{\alpha, K}^m$ is strictly decreasing.
    First, we prove i).
    Due to a non-dimensionalized version of the second inequality in \eqref{eq:statistics-ineq}, we can estimate
    $
        Q_{\alpha, K, \sigma}^m
        \geq
        z_\alpha D_{K, \sigma} \boldsymbol{\psi}^m + \mathcal{F}_\alpha^{-1}(n_{\alpha, K}^m) - \log(n_{\alpha, K}^m).
    $
    Thus, there exist constants $\underline{B}_{-}, \overline{B}_{+} > 0$ independent of $s$ such that
    \begin{align*}
        B\bigl( -Q_{\alpha, K, \sigma}^m (s) \bigr) \geq \underline{B}_{-},
        \quad
        B\bigl( Q_{\alpha, K, \sigma}^m (s) \bigr) \leq \overline{B}_{+},
            \quad  \text{for all } s \geq 0,
    \end{align*}
    since $x\mapsto B(-x) $ is non-negative and increasing and the Bernoulli function $B$ is non-negative and decreasing.
    It follows that
    \begin{align*}
        Z_{\alpha, K}^m (s)
        \leq
        - \tau_\sigma  \underline{B}_{-} s - v_\alpha m_\sigma s
        + \tau_\sigma  \overline{B}_{+} n_{\alpha, K}^m + v_\alpha m_\sigma n_{\alpha, \sigma}^D,
    \end{align*}
    which tends to $-\infty$ as $s \rightarrow \infty$.
    For ii), we know that for $s \in (0,1]$, the function $s \mapsto \mathcal{F}_\alpha^{-1}(s) - \log(s)$ is bounded from above and below via $0 \leq \mathcal{F}_\alpha^{-1}(s) - \log(s) \leq \mathcal{F}_\alpha^{-1}(1)$ since $\mathcal{F}_\alpha^{-1}$ is monotonically increasing.
    Hence, there also exist constants $\overline{B}_{-}, \underline{B}_{+} > 0$ independent of $s$ such that
    \begin{align*}
        B\bigl( -Q_{\alpha, K, \sigma}^m  (s) \bigr) \leq \overline{B}_{-},
        \quad
        B\bigl( Q_{\alpha, K, \sigma}^m  (s) \bigr) \geq \underline{B}_{+},
            \quad  \text{for all } s \in (0,1].
    \end{align*}
    Consequently,  we have
    \begin{align*}
        \liminf_{s\rightarrow 0} Z_{\alpha, K}^m (s)
        \geq
        \tau_\sigma  \underline{B}_{+} n_{\alpha, K}^m + v_\alpha m_\sigma n_{\alpha, \sigma}^D > 0.
    \end{align*}
    Lastly, to prove iii), we calculate the derivative of $Z_{\alpha, K}^m$ which is
    \begin{align*}
        \frac{\text{d} Z_{\alpha, K}^m}{ \text{d} s}
        = &- \tau_\sigma B \bigl(-Q_{\alpha, K, \sigma}^m(s)\bigr)
        + \tau_\sigma \frac{\text{d} Q_{\alpha, K, \sigma}^m}{\text{d} s} B' \bigl(-Q_{\alpha, K, \sigma}^m(s)\bigr) s\\
        ~
        &+ \tau_\sigma \frac{\text{d} Q_{\alpha, K, \sigma}^m}{\text{d} s} B'\bigl(Q_{\alpha, K, \sigma}^m (s) \bigr) n_{\alpha, K}^m
        ~
        - v_\alpha m_\sigma.
    \end{align*}
    Due to a non-dimensionalized version of the first inequality in \eqref{eq:statistics-ineq}, we have
    $
    \text{d} Q_{\alpha, K, \sigma}^m/ \text{d} s = ( \mathcal{F}_\alpha^{-1} )' (s) - 1/s \geq 0
    $.
    Moreover, $B \geq 0$, $ B' \leq 0$, $s \geq 0$, $v_\alpha m_\sigma >0$, and, hence, $\text{d} Z_{\alpha, K}^m/ \text{d} s < 0$ which proves that the derivative of $Z_{\alpha, K}^m$ is strictly decreasing.
\end{proof}
The equations \eqref{eq:boundary-unknown-ion}, \eqref{eq:boundary-ohmic}, \eqref{eq:Schottky-psi-sigma},  and \Cref{lem:Z-zero} ensure that all boundary unknowns $ (\varphi_{\electrons, \sigma}^{m})_{\sigma\in{\mathcal E}^C}$, $(\varphi_{\holes, \sigma}^{m})_{\sigma\in{\mathcal E}^C}$, $(\varphi_{\ions, \sigma}^{m})_{\sigma\in{\mathcal E}^C}$, $(\psi_{\sigma}^{m})_{\sigma\in{\mathcal E}^C}$ can be uniquely expressed as functions of the cell center unknowns for both contact boundary models.
As a result, these boundary unknowns can be eliminated from the system.
Thus, the discretization scheme \eqref{eq:discrete-model-memristor} and \eqref{eq:discrete-Sedan} reduces to a nonlinear system of equations at each time step, where the unknowns are the quasi Fermi potentials and the electrostatic potential evaluated exclusively at the cell centers.

\subsection{Existence of a discrete solution} \label{sec:existence}

A crucial step in proving the existence of a discrete solution is establishing the entropy-dissipation inequality, which depends on the discrete entropy $\mathbb{E}_\mathcal{T}^m$ and dissipation $\mathbb{D}_{\mathcal{T}}^m$.
Both are discrete analogues to the continuous counterparts introduced in \Cref{sec:cont_entropy_dissipation}.
Following the formulation in \eqref{eq:continuous-entropy}, the non-negative discrete entropy for $m \in \mathbb{N}$ is defined as
\begin{equation} \label{eq:discrete-entropy-memristor}
    \begin{split}
        \mathbb{E}_\mathcal{T}^m
        = \, &\,\frac{\lambda^2}{2} \sum_{\sigma \in \mathcal{E}} \tau_\sigma \left(D_{\sigma} ( \boldsymbol{\psi}^m - \boldsymbol{\hat{\psi}}^D)\right)^2
        ~
        + \sum_{K \in \mathcal{T}} m_K \Phi_{\ions}(n_{\ions, K}^m)\\
        ~
        &+  \delta_\electrons \sum_{K \in \mathcal{T}} m_K H_\electrons(n_{\electrons, K}^m, n_{\electrons, K}^D)
        ~
        +  \delta_\electrons \delta_\holes \sum_{K \in \mathcal{T}} m_K H_\holes(n_{\holes, K}^m, n_{\holes, K}^D),
    \end{split}
\end{equation}
where $\boldsymbol{\hat{\psi}}^D = \boldsymbol{\psi}^D + \overline{\mathbf{V}}$ for the Schottky boundary model and $\boldsymbol{\hat{\psi}}^D = \boldsymbol{\psi}^D$ for the ohmic boundary conditions.
Furthermore, we have $\Phi_{\ions}' = \mathcal{F}_\ions^{-1} $ (see \eqref{eq:entropyfunc}) and $ H_\alpha(x,y) = \Phi_\alpha(x) - \Phi_\alpha(y) - \Phi_\alpha'(y)(x-y)$ for $\alpha \in \{ \electrons, \holes\}$ (see \eqref{eq:H-function}).
The non-negative dissipation rate for $m \in \mathbb{N}$ for the Schottky boundary model is defined similarly to \eqref{eq:continuous-dissipation-Schottky} for $m \in \mathbb{N}$
\begin{equation} \label{eq:discrete-dissip-Schottky}
    \begin{split}
        \mathbb{D}_{\mathcal{T}, \reviewerTwo{\text{SC}}}^m
        =&\;  \frac{\delta_\electrons}{2\nu}  \sum_{ \sigma \in \mathcal{E} } \tau_\sigma  \overline{n}^{m}_{\electrons, \sigma}  (D_{\sigma} \boldsymbol{\varphi}_{\electrons}^m)^2
        ~
        + \frac{ \delta_\electrons \delta_\holes}{2\nu}  \sum_{ \sigma \in \mathcal{E} } \tau_\sigma  \overline{n}^{m}_{\holes, \sigma}  (D_{\sigma} \boldsymbol{\varphi}_{\holes}^m)^2\\
        ~
        &+ \frac{ z_{\ions}^2}{2}\sum_{\sigma \in \mathcal{E}} \tau_\sigma \overline{n}^{m}_{\ions,\sigma}  (D_{ \sigma} \boldsymbol{\varphi}_{\ions}^m)^2\\
        ~
        &+ \frac{\delta_\electrons}{\nu} \sum_{\sigma \in \mathcal{E}^C}  v_\electrons  m_\sigma  \bigl( \mathcal{F}^{-1}_\electrons (n_{\electrons, \sigma}^m) - \mathcal{F}^{-1}_\electrons  (n_{\electrons, \sigma}^D)  \bigr)  \left( n_{\electrons, \sigma}^m - n_{\electrons, \sigma}^D \right) \\
        ~
        &+ \frac{\delta_\electrons\delta_\holes}{\nu} \sum_{\sigma \in \mathcal{E}^C} v_\holes m_\sigma  \bigl(\mathcal{F}^{-1}_\holes (n_{\holes, \sigma}^m) - \mathcal{F}^{-1}_\holes  (n_{\holes, \sigma}^D)  \bigr)  \left( n_{\holes, \sigma}^m - n_{\holes, \sigma}^D \right),
    \end{split}
\end{equation}
and for the Dirichlet boundary model we have the discrete counterpart of \eqref{eq:continuous-dissipation-Dirichlet} for $m \in \mathbb{N}$
\begin{equation} \label{eq:discrete-dissip-Dirichlet}
    \begin{split}
        \mathbb{D}_{\mathcal{T}, \reviewerTwo{\text{OC}}}^m
        =&\;  \frac{\delta_\electrons}{2\nu}  \sum_{\sigma \in \mathcal{E}} \tau_\sigma  \overline{n}^{m}_{\electrons, \sigma}  (D_{\sigma} \boldsymbol{\varphi}_{\electrons}^m)^2
        ~
        + \frac{ \delta_\electrons \delta_\holes}{2\nu}  \sum_{\sigma \in \mathcal{E}} \tau_\sigma  \overline{n}^{m}_{\holes, \sigma}  (D_{\sigma} \boldsymbol{\varphi}_{\holes}^m)^2\\
        ~
        &+ \frac{ z_{\ions}^2}{2}\sum_{\sigma \in \mathcal{E}} \tau_\sigma \overline{n}^{m}_{\ions,\sigma}  (D_{ \sigma} \boldsymbol{\varphi}_{\ions}^m)^2.
    \end{split}
\end{equation}
In particular, the contact boundary terms over $\sigma \in \mathcal{E}^C$ in \eqref{eq:discrete-dissip-Schottky} are non-negative due to the monotonicity of the inverse of statistics function.
The entropy-dissipation inequality reads as follows and can be proven similarly to \cite[Theorem 4.4]{Abdel2023Existence}.
\begin{theorem}(Discrete entropy-dissipation inequality) \label{thm:discrete-E-D-memristor}
    For any solution to the finite volume scheme \eqref{eq:discrete-model-memristor}, \eqref{eq:discrete-Sedan} one has the following entropy-dissipation inequality: For any $\varepsilon>0$, there is a constant $c_{\varepsilon, \mathbf{\Omega}}>0$ such that for any $m\in\mathbb{N}$, one has
    \begin{equation} \label{eq:discrete-E-D-inequality-memristor}
        \frac{\mathbb{E}_{\mathcal{T}}^m - \mathbb{E}_{\mathcal{T}}^{m-1}}{\tau^m} +  \mathbb{D}_{\mathcal{T}}^m
        \leq c_{\varepsilon, \mathbf{\Omega}} + \varepsilon \mathbb{E}_\mathcal{T}^m,
    \end{equation}
    where the entropy $\mathbb{E}_{\mathcal{T}}^m$ is defined in \eqref{eq:discrete-entropy-memristor} and the dissipation $\mathbb{D}_{\mathcal{T}}^m \reviewerTwo{ = \mathbb{D}_{\mathcal{T}, \text{SC}}^m}$ in \eqref{eq:discrete-dissip-Schottky} (or  \reviewerTwo{$\mathbb{D}_{\mathcal{T}}^m = \mathbb{D}_{\mathcal{T}, \text{OC}}^m$ in} \eqref{eq:discrete-dissip-Dirichlet}).
    The constant $c_{\varepsilon, \mathbf{\Omega}}$ depends solely on $\varepsilon$, the measure of $\mathbf{\Omega}$, the mesh regularity, the boundary data via the norms $ \|\varphi^D\|_{W^{1,\infty}}$ and $\|\psi^D\|_{W^{1,\infty}}$, as well as on $z_{\ions}^2$ and the dimensionless parameters $\delta_\electrons$, $\delta_\holes$ and $\nu$.
    If $\nabla\varphi^D = \nabla\psi^D = \mathbf{0}$, then the right-hand side of \eqref{eq:discrete-E-D-inequality-memristor} vanishes. \QEDB
\end{theorem}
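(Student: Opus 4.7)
Following \cite[Theorem 4.4]{Abdel2023Existence}, my plan is to derive \eqref{eq:discrete-E-D-inequality-memristor} by testing each discrete mass balance \eqref{eq:discrete-mass-balance-elec-memristor}--\eqref{eq:discrete-mass-balance-anions-memristor} against a scaled chemical-potential increment and the discrete Poisson equation \eqref{eq:model-poisson-discrete-memristor} against a time increment of $\boldsymbol{\psi}^m - \boldsymbol{\hat{\psi}}^D$. Concretely, for $\alpha \in \{\electrons, \holes\}$ I would multiply the continuity equation by $\mathcal{F}_\alpha^{-1}(n_{\alpha,K}^m) - \mathcal{F}_\alpha^{-1}(n_{\alpha,K}^D)$ (with the prefactors inherited from \eqref{eq:discrete-entropy-memristor}), and for $\alpha = \ions$ by $\mathcal{F}_\ions^{-1}(n_{\ions,K}^m)/z_\ions$, then sum over $K \in \mathcal{T}$ and apply discrete summation by parts using the local conservativity \eqref{eq:conservativity} and the vanishing flux on $\mathcal{E}^N$. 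Because the state equation \eqref{eq:discrete-state-eq-m} reads $z_\alpha(\varphi_{\alpha,K}^m - \psi_K^m) = \mathcal{F}_\alpha^{-1}(n_{\alpha,K}^m)$, the $\boldsymbol{\psi}^m$-contribution arising from each mass-balance test cancels against the Poisson test once \eqref{eq:model-poisson-discrete-memristor} is multiplied by $(\psi_K^m - \hat{\psi}_K^D)/\tau^m$ and differenced between steps $m$ and $m-1$.

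\textbf{Entropy time increment.} The convexity of $\Phi_\alpha$ together with $\Phi_\alpha' = \mathcal{F}_\alpha^{-1}$ (cf.\ \eqref{eq:entropyfunc}) yields the chord inequality
\[
H_\alpha(n_{\alpha,K}^m, n_{\alpha,K}^D) - H_\alpha(n_{\alpha,K}^{m-1}, n_{\alpha,K}^D) \leq \bigl[\mathcal{F}_\alpha^{-1}(n_{\alpha,K}^m) - \mathcal{F}_\alpha^{-1}(n_{\alpha,K}^D)\bigr]\bigl(n_{\alpha,K}^m - n_{\alpha,K}^{m-1}\bigr)
\]
for $\alpha \in \{\electrons, \holes\}$, together with the analogous bound for $\Phi_\ions$; these match exactly the discrete time derivatives on the left of the tested mass balances. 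For the electrostatic contribution, the elementary identity $2a(a-b) \geq a^2 - b^2$ applied to the edge differences of $\boldsymbol{\psi}^m - \boldsymbol{\hat{\psi}}^D$ converts the Poisson test into the increment of $\tfrac{\lambda^2}{2}\sum_\sigma \tau_\sigma (D_\sigma(\boldsymbol{\psi}^m - \boldsymbol{\hat{\psi}}^D))^2$. Summing these four contributions with the prefactors of \eqref{eq:discrete-entropy-memristor} produces $(\mathbb{E}_\mathcal{T}^m - \mathbb{E}_\mathcal{T}^{m-1})/\tau^m$ on the left-hand side.

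\textbf{Dissipation.} Summation by parts turns the flux contributions into sums of the form $\sum_\sigma J_{\alpha,K,\sigma}^m D_{K,\sigma}(z_\alpha \boldsymbol{\varphi}_\alpha^m)$, minus corrections involving $D_{K,\sigma}(z_\alpha \boldsymbol{\varphi}^D)$ on contact edges, plus a boundary sum over $\mathcal{E}^C$ in the Schottky case. The excess-chemical-potential flux \eqref{eq:discrete-Sedan} admits the standard lower bound
\[
-J_{\alpha,K,\sigma}^m D_{K,\sigma}(z_\alpha \boldsymbol{\varphi}_\alpha^m) \geq z_\alpha^2 \tau_\sigma \overline{n}_{\alpha,\sigma}^m \bigl(D_{K,\sigma}\boldsymbol{\varphi}_\alpha^m\bigr)^2,
\]
derived from the monotonicity of the Bernoulli function \eqref{eq:bernoulli} with $\overline{n}_{\alpha,\sigma}^m$ a Bernoulli-weighted edge average of the cell densities; this generates the bulk dissipation terms of $\mathbb{D}_\mathcal{T}^m$. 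On the Schottky boundary, the Robin flux \eqref{eq:discrete-flux-boundary} combined with $\psi_\sigma^m = \tilde{\psi}_\sigma^D$ from \eqref{eq:Schottky-psi-sigma} and the state equation \eqref{eq:discrete-state-eq-boundary} makes the boundary part of the test equal to $v_\alpha m_\sigma (n_{\alpha,\sigma}^m - n_{\alpha,\sigma}^D)[\mathcal{F}_\alpha^{-1}(n_{\alpha,\sigma}^m) - \mathcal{F}_\alpha^{-1}(n_{\alpha,\sigma}^D)]$, non-negative by monotonicity and precisely matching the extra contact terms in \eqref{eq:discrete-dissip-Schottky}; for ohmic contacts the substitution \eqref{eq:boundary-ohmic} makes these contact edges drop out, leaving \eqref{eq:discrete-dissip-Dirichlet}.

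\textbf{Residuals and main obstacle.} The only terms not yet absorbed into $(\mathbb{E}_\mathcal{T}^m - \mathbb{E}_\mathcal{T}^{m-1})/\tau^m + \mathbb{D}_\mathcal{T}^m$ involve edge differences of the Dirichlet data $D_{K,\sigma}\boldsymbol{\psi}^D$ and $D_{K,\sigma}\boldsymbol{\varphi}^D$, pointwise bounded by $d_\sigma \|\psi^D\|_{W^{1,\infty}}$ and $d_\sigma \|\varphi^D\|_{W^{1,\infty}}$ respectively. Young's inequality splits each cross product into a small multiple of $\mathbb{D}_\mathcal{T}^m$ (absorbed on the left), a small multiple of $\mathbb{E}_\mathcal{T}^m$ (yielding the $\varepsilon \mathbb{E}_\mathcal{T}^m$ on the right), and a constant $c_{\varepsilon,\mathbf{\Omega}}$ depending only on $\varepsilon$, $|\mathbf{\Omega}|$, mesh regularity, the stated norms of the boundary data, $z_\ions^2$, $\delta_\electrons$, $\delta_\holes$, and $\nu$. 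When $\nabla \varphi^D = \nabla \psi^D = \mathbf{0}$ all these edge differences vanish identically, leaving the homogeneous inequality. The main technical obstacle is treating the Schottky contact uniformly with the ohmic one: the boundary density $n_{\alpha,\sigma}^m$ on Schottky edges is defined only implicitly via \Cref{lem:Z-zero}, so one must exploit the equivalence \eqref{eq:implicit_eq_boundary} between the TPFA and Robin representations of the flux in order to rewrite each boundary contribution in exactly the form that makes the summation-by-parts calculation produce the Schottky boundary dissipation rather than a bulk mismatch.
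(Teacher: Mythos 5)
Your proposal is correct and follows essentially the same entropy-method argument that the paper invokes by deferring to \cite[Theorem~4.4]{Abdel2023Existence}: testing the mass balances with relative (electro)chemical potentials, combining with the time-differenced Poisson equation, using convexity of $\Phi_\alpha$ for the entropy increment and the coercivity of the excess-chemical-potential flux for the dissipation, and absorbing the Dirichlet-data residuals via Young's inequality. Your treatment of the Schottky contact edges --- exploiting the equivalence \eqref{eq:implicit_eq_boundary} between the TPFA and Robin forms of the boundary flux together with $\psi_\sigma^m = \tilde{\psi}_\sigma^D$ to produce the extra boundary dissipation terms of \eqref{eq:discrete-dissip-Schottky} --- is exactly the adaptation the paper relies on.
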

The following main theoretical result of our work, \Cref{thm.exresult-memristor}, establishes the existence of discrete solutions.
The proof is divided into lemmas, each using different arguments, as shown in \Cref{fig:proof-schematics}.
Most of the necessary auxiliary results are provided in \Cref{app:auxiliary-results} which follow in a straightforward manner \cite{Abdel2023Existence}.
However, to prove for the Schottky contact boundary model the \textit{a priori estimates} on the electron and hole quasi Fermi potentials in \Cref{lem.bounds.phialpha} we are in need of another lemma which guarantees the boundedness of the boundary potentials $\varphi_{\electrons, \sigma}^m$, $\varphi_{\holes, \sigma}^m$.
Particularly in the proof of \Cref{lem.bounds.phialpha}, we set $a = z_\alpha (\varphi_{\alpha, \sigma}^m - \psi_\sigma^m)$ and $b = z_\alpha (\varphi_{\alpha, \sigma}^D - \psi_\sigma^D)$.
\begin{lemma} \label{lem:bound-boundary-density}
    Let $\alpha \in \{ \electrons, \holes \}$ and $a, b \in \mathbb{R}$.
    Assume that there exists $\underline{b}, \overline{b} \in \mathbb{R}$ and $M \geq 0$, such that
    \begin{equation*}
        \left\{
        \begin{aligned}
            &\underline{b} \leq b \leq \overline{b},\\[.5em]
            & (a-b) \left( \mathcal{F}_\alpha(a) - \mathcal{F}_\alpha (b) \right) \leq M.
        \end{aligned}
        \right.
    \end{equation*}
    Then,
    \begin{equation*}
        \mathcal{F}^{-1}_\alpha \Bigl( \bigl(- \frac{\sqrt{M}}{2} + \sqrt{ \mathcal{F}_\alpha(\underline{b})}\bigr)^2 \Bigr)
        \leq
        a
        \leq
        \mathcal{F}^{-1}_\alpha \Bigl( \bigl( \frac{\sqrt{M}}{2} + \sqrt{ \mathcal{F}_\alpha(\overline{b})}\bigr)^2 \Bigr).
    \end{equation*}
\end{lemma}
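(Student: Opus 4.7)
The plan is to reduce the two-sided bound on $a$ to a single scalar estimate of the form $|\sqrt{\mathcal{F}_\alpha(a)} - \sqrt{\mathcal{F}_\alpha(b)}| \leq \sqrt{M}/2$. Once this is established, the two-sided bound on $a$ follows by inserting the bounds on $b$ through the monotonicity of $\mathcal{F}_\alpha$, squaring, and applying $\mathcal{F}_\alpha^{-1}$.

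The core of the proof is therefore the comparison inequality
\begin{equation*}
    \bigl(\sqrt{\mathcal{F}_\alpha(a)} - \sqrt{\mathcal{F}_\alpha(b)}\bigr)^2 \leq \tfrac{1}{4}\,(a-b)\bigl(\mathcal{F}_\alpha(a) - \mathcal{F}_\alpha(b)\bigr),
\end{equation*}
which I expect to be the main step and is the decisive use of hypothesis \eqref{hyp:statistics-n-p}. I would prove it by assuming without loss of generality that $a>b$ (the opposite case is symmetric and both sides vanish when $a=b$), writing
\begin{equation*}
    \sqrt{\mathcal{F}_\alpha(a)} - \sqrt{\mathcal{F}_\alpha(b)} = \int_b^a \frac{\mathcal{F}_\alpha'(t)}{2\sqrt{\mathcal{F}_\alpha(t)}}\, dt,
\end{equation*}
and applying the Cauchy--Schwarz inequality to obtain
\begin{equation*}
    \bigl(\sqrt{\mathcal{F}_\alpha(a)} - \sqrt{\mathcal{F}_\alpha(b)}\bigr)^2 \leq (a-b) \int_b^a \frac{(\mathcal{F}_\alpha'(t))^2}{4\,\mathcal{F}_\alpha(t)}\, dt.
\end{equation*}
The pointwise bound $\mathcal{F}_\alpha'(t) \leq \mathcal{F}_\alpha(t)$ from \eqref{hyp:statistics-n-p} then yields $(\mathcal{F}_\alpha'(t))^2 / \mathcal{F}_\alpha(t) \leq \mathcal{F}_\alpha'(t)$, so the integral collapses to $\tfrac{1}{4}(\mathcal{F}_\alpha(a) - \mathcal{F}_\alpha(b))$, giving exactly the desired inequality.

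Combining this with the hypothesis $(a-b)(\mathcal{F}_\alpha(a) - \mathcal{F}_\alpha(b)) \leq M$ — note that this product is automatically non-negative because $\mathcal{F}_\alpha$ is strictly increasing — produces the scalar estimate $|\sqrt{\mathcal{F}_\alpha(a)} - \sqrt{\mathcal{F}_\alpha(b)}| \leq \sqrt{M}/2$, i.e.
\begin{equation*}
    \sqrt{\mathcal{F}_\alpha(b)} - \tfrac{\sqrt{M}}{2} \leq \sqrt{\mathcal{F}_\alpha(a)} \leq \sqrt{\mathcal{F}_\alpha(b)} + \tfrac{\sqrt{M}}{2}.
\end{equation*}
Monotonicity of $\mathcal{F}_\alpha$ together with $\underline{b}\leq b\leq \overline{b}$ then gives $\sqrt{\mathcal{F}_\alpha(a)} \leq \sqrt{\mathcal{F}_\alpha(\overline{b})} + \sqrt{M}/2$ and $\sqrt{\mathcal{F}_\alpha(a)} \geq \sqrt{\mathcal{F}_\alpha(\underline{b})} - \sqrt{M}/2$. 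The right-hand bound is positive, so squaring and applying the monotone $\mathcal{F}_\alpha^{-1}$ yields the stated upper bound on $a$. For the lower bound, if $\sqrt{\mathcal{F}_\alpha(\underline{b})} - \sqrt{M}/2 \geq 0$, squaring and inverting produces the claim directly; otherwise the inequality on $\sqrt{\mathcal{F}_\alpha(a)}$ is automatic since $\sqrt{\mathcal{F}_\alpha(a)}>0$, and in that regime the lower bound on $a$ is to be read in the (vacuous or degenerate) sense consistent with $\mathcal{F}_\alpha^{-1}$ applied to the same squared quantity.
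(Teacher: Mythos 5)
Your proof is correct and follows essentially the same route as the paper: both establish the key estimate $4\bigl(\sqrt{\mathcal{F}_\alpha(a)}-\sqrt{\mathcal{F}_\alpha(b)}\bigr)^2 \leq (a-b)\bigl(\mathcal{F}_\alpha(a)-\mathcal{F}_\alpha(b)\bigr)$ via the Cauchy--Schwarz inequality combined with the bound $\mathcal{F}_\alpha' \leq \mathcal{F}_\alpha$ from \eqref{hyp:statistics-n-p}, differing only in the (cosmetic) order in which these two ingredients are applied. Your explicit remark on the degenerate case $\sqrt{\mathcal{F}_\alpha(\underline{b})} < \sqrt{M}/2$ for the lower bound is a point the paper's proof silently glosses over, so flagging it is a welcome addition rather than a defect.
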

\begin{proof}
    We start by estimating
    \begin{align*}
        M \geq (a-b) \left( \mathcal{F}_\alpha(a) - \mathcal{F}_\alpha (b) \right)
        &= \left( \int_{a}^{b} 1 \, ds \right) \left(  \int_{a}^{b} \mathcal{F}'_\alpha(s) \, ds\right)\\
        &\geq \left(  \int_{a}^{b} \sqrt{\mathcal{F}'_\alpha(s)} \, ds\right)^2,
    \end{align*}
which is due to the Hölder inequality.
The next estimate follows from the assumption \eqref{hyp:statistics-n-p}    \begin{align*}
    \left(  \int_{a}^{b} \sqrt{\mathcal{F}'_\alpha(s)} \, ds\right)^2
    =
    4 \left(  \int_{a}^{b} \frac{\mathcal{F}'_\alpha(s)}{2 \sqrt{\mathcal{F}'_\alpha(s)}} \, ds\right)^2
    &\geq 4 \left(  \int_{a}^{b} \frac{\mathcal{F}'_\alpha(s)}{2 \sqrt{\mathcal{F}_\alpha(s)}} \, ds\right)^2\\
    &= 4 \left(  \sqrt{\mathcal{F}_\alpha(b)} - \sqrt{\mathcal{F}_\alpha(a)} \right)^2,
\end{align*}
where the last equality can be proven by a substitution $u = \sqrt{\mathcal{F}_\alpha(s) }$.
By combining both estimates, we receive
\begin{align*}
    M \geq 4 \left(  \sqrt{\mathcal{F}_\alpha(b)} - \sqrt{\mathcal{F}_\alpha(a)} \right)^2,
\end{align*}
which proves the claim after rearranging terms and using $\underline{b} \leq b \leq \overline{b}$.
\end{proof}

\begin{figure}[ht!]
    \hspace*{-0.7cm}
    \includegraphics[width = 1.1\textwidth]{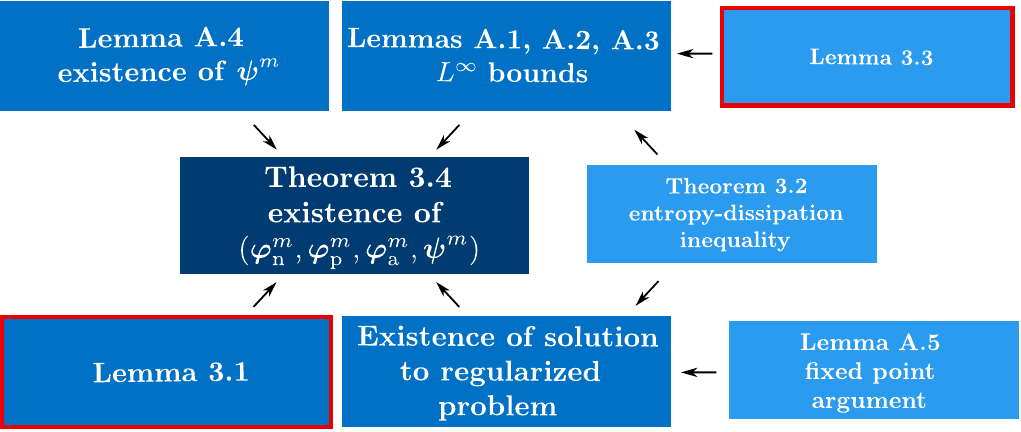}
    \caption{
        Summary of the proof schematic showing the different layers to prove \Cref{thm.exresult-memristor}.
        Lemmas and theorems without red border can be easily adapted from \cite{Abdel2023Existence} while the ones highlighted in red are the additional auxiliary results established in this work to prove the existence of discrete solutions.}
        \label{fig:proof-schematics}
\end{figure}

With this, we can formulate the existence result.
\begin{theorem}(Existence of discrete solution)\label{thm.exresult-memristor}
    For all time steps $m\geq 1$, the implicit-in-time finite volume scheme \eqref{eq:discrete-model-memristor}, \eqref{eq:discrete-Sedan} for the TMDC-based memristor charge transport model \eqref{eq:model-dimless-memristor} supplied with initial \eqref{eq:initial-cond-memristor}, and boundary conditions \eqref{eq:memristor-BC} and \eqref{eq:memristor-Schottky-BC-dimless} (or \eqref{eq:memristor-Dirichlet-BC-dimless}) has at least one solution
    $
        (\boldsymbol{\varphi}_{\electrons}^m, \boldsymbol{\varphi}_{\holes}^m,\boldsymbol{\varphi}_{\ions}^m, \boldsymbol{\psi}^m) \in \mathbb{R}^{\theta}$
        with $\theta=4\textrm{Card}(\mathcal{T})
    $.
    Moreover, this solution satisfies the following $L^{\infty}$ bounds.
    There exists $M_B > 0$ such that
    \begin{align*}
        - M_B
        \leq
        \boldsymbol{\varphi}_{\electrons}^m, \boldsymbol{\varphi}_{\holes}^m,\boldsymbol{\varphi}_{\ions}^m, \boldsymbol{\psi}^m
        \leq M_B,
        \quad \text{for all}\; m\geq 1,
    \end{align*}
    holds componentwise.
    The constant $M_B$ depends on the data (including the non-dimensionalized parameters and the Dirichlet functions) as well as on the temporal and spatial mesh.
\end{theorem}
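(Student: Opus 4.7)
I would proceed by induction on the time index $m$, assuming a solution $(\boldsymbol{\varphi}_\electrons^{m-1},\boldsymbol{\varphi}_\holes^{m-1},\boldsymbol{\varphi}_\ions^{m-1},\boldsymbol{\psi}^{m-1})$ at the previous step is already available with known componentwise bounds, and establishing existence at step $m$ via a topological degree argument of the same flavor as in \cite{Abdel2023Existence}. After the elimination of the boundary unknowns described in \Cref{sec:discrete-boundary-values} (which relies on \Cref{lem:Z-zero} for the Schottky case), the scheme \eqref{eq:discrete-model-memristor}--\eqref{eq:discrete-Sedan} becomes a finite-dimensional nonlinear system $F(\boldsymbol{X})=0$ in the single vector $\boldsymbol{X}=(\boldsymbol{\varphi}_\electrons^m,\boldsymbol{\varphi}_\holes^m,\boldsymbol{\varphi}_\ions^m,\boldsymbol{\psi}^m)\in\mathbb{R}^{\theta}$. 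I would embed this system in a continuous homotopy $F_\mu$, $\mu\in[0,1]$, chosen so that $F_1=F$ and $F_0$ is a decoupled linear reference problem whose unique solvability is clear and whose Brouwer degree is nonzero. Existence at step $m$ then follows once I show that all solutions of $F_\mu(\boldsymbol{X})=0$, $\mu\in[0,1]$, are confined to a common ball of $\mathbb{R}^{\theta}$.

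\textbf{A priori bounds from entropy-dissipation.} The uniform a priori estimates along the homotopy should come from \Cref{thm:discrete-E-D-memristor}, the homotopy being built so that the discrete entropy-dissipation inequality \eqref{eq:discrete-E-D-inequality-memristor} is inherited by every $F_\mu$ (which is the case if only couplings are rescaled, leaving the dissipative structure intact). Choosing $\varepsilon$ small enough to be absorbed into the left-hand side, a one-step discrete Gronwall argument gives a uniform bound on $\mathbb{E}_\mathcal{T}^m$ in terms of $\mathbb{E}_\mathcal{T}^{m-1}$ and the data. From this I extract (i) a discrete $H^1$-type bound on $\boldsymbol{\psi}^m-\boldsymbol{\hat{\psi}}^D$ via the gradient term of \eqref{eq:discrete-entropy-memristor}, (ii) pointwise confinement of $\boldsymbol{n}_\ions^m$ to $(0,1)$ from \eqref{hyp:statistics-a} together with the coercivity of $\Phi_\ions$, and (iii) integrability control of $\boldsymbol{n}_\electrons^m,\boldsymbol{n}_\holes^m$ through $H_\electrons$ and $H_\holes$. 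A mesh-dependent discrete Poincaré inequality promotes (i) to a componentwise $L^\infty$ bound on $\boldsymbol{\psi}^m$; inverting the ionic state equation \eqref{eq:discrete-state-eq-m} then yields the bound on $\boldsymbol{\varphi}_\ions^m$.

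\textbf{Main obstacle: the Schottky boundary.} The truly delicate step, and the one in which the Schottky case departs from \cite{Abdel2023Existence}, is bounding $\boldsymbol{\varphi}_\electrons^m$ and $\boldsymbol{\varphi}_\holes^m$. Under ohmic conditions \eqref{eq:memristor-Dirichlet-BC-dimless} the cell-center potentials are controlled from the fixed Dirichlet traces inward using the density-weighted edge gradients produced by $\mathbb{D}_\mathcal{T}^m$. Under Schottky conditions the boundary densities $n_{\alpha,\sigma}^m$, $\alpha\in\{\electrons,\holes\}$, are themselves unknown (implicitly determined through \eqref{eq:implicit_eq_boundary}), so only the Robin contribution to \eqref{eq:discrete-dissip-Schottky}, i.e.\ the product $(\mathcal{F}_\alpha^{-1}(n_{\alpha,\sigma}^m)-\mathcal{F}_\alpha^{-1}(n_{\alpha,\sigma}^D))(n_{\alpha,\sigma}^m-n_{\alpha,\sigma}^D)$, is available a priori. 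This is exactly the setting for \Cref{lem:bound-boundary-density}, applied with $a=z_\alpha(\varphi_{\alpha,\sigma}^m-\psi_\sigma^m)$ and $b=z_\alpha(\varphi_{\alpha,\sigma}^D-\psi_\sigma^D)$: it upgrades the product bound into two-sided pointwise bounds on $a$, and hence, using the already controlled $\psi_\sigma^m=\tilde{\psi}_\sigma^D$ from \eqref{eq:Schottky-psi-sigma}, into pointwise bounds on $\varphi_{\alpha,\sigma}^m$ along $\mathcal{E}^C$. Walking into the bulk along mesh edges and using the dissipation to control $D_\sigma\boldsymbol{\varphi}_\alpha^m$ then propagates these boundary bounds to all cells, which is the content of \Cref{lem.bounds.phialpha}. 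This propagation, coupling the mesh Poincaré constant with the density-weighted gradient norms, is the step I expect to be the main technical obstacle.

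\textbf{Conclusion via topological degree.} Collecting the estimates, all zeros of $F_\mu$ for $\mu\in[0,1]$ lie inside one ball of $\mathbb{R}^\theta$, so the Brouwer degree of $F_\mu$ on any sufficiently large ball is well defined, independent of $\mu$, and nonzero at $\mu=0$ by construction. Hence $F_1=F$ has a zero, providing the desired discrete solution at step $m$. Induction on $m$ completes the proof, and the constant $M_B$ in the announced $L^\infty$ bound is precisely the one produced by the a priori estimates above, depending on the non-dimensional parameters, on $\|\varphi^D\|_{W^{1,\infty}}$ and $\|\psi^D\|_{W^{1,\infty}}$, on $z_\ions^2$, and, through the Poincaré and mesh-regularity constants, on both the temporal and spatial discretizations.
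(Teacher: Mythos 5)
Your proposal is correct in its essentials and invokes exactly the right ingredients: induction in time, the discrete entropy-dissipation inequality \Cref{thm:discrete-E-D-memristor} absorbed by a one-step Gr\"onwall argument, the a priori bounds of \Cref{lem.bounds.psi}, \Cref{lem.bounds.phialpha} and \Cref{lem.bounds.anions}, and --- crucially for the Schottky case --- \Cref{lem:bound-boundary-density} applied with $a=z_\alpha(\varphi_{\alpha,\sigma}^m-\psi_\sigma^m)$ and $b=z_\alpha(\varphi_\sigma^D-\psi_\sigma^D)$, which is precisely how the paper controls the boundary quasi Fermi potentials. You differ from the paper in two respects. First, the paper does not keep all four fields as unknowns: it defines $\mathbf{X}$ in \eqref{def.X} as the three quasi Fermi potentials only (shifted by the Dirichlet data), and uses \Cref{lem:discrete-poisson} to express $\boldsymbol{\psi}$ as a continuous function of $\mathbf{X}$, and \Cref{lem:Z-zero} to eliminate the Schottky boundary densities; the resulting vector field $\mathbf{P}_m$ acts on $\mathbb{R}^{3\mathrm{Card}(\mathcal{T})}$. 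This reduction is what makes the a priori lemmas applicable in the form stated. Second, where you propose a homotopy and Brouwer degree, the paper regularizes the vector field, applies the sign-condition corollary of Brouwer's theorem (\Cref{lem.Evans}: $\mathbf{P}(\mathbf{X})\cdot\mathbf{X}\geq 0$ on a sphere implies a zero inside), obtains bounds uniform in the regularization parameter, and passes to the limit. The two finishing moves are close cousins, but your sketch leaves the hardest part unaddressed: the dissipation weights $\overline{n}^m_{\alpha,\sigma}$ can degenerate, so a large $\|\mathbf{X}\|$ does not directly force a large dissipation, and hence neither the sign condition nor the confinement of all zeros of $F_\mu$ to a common ball is immediate. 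The paper's regularization exists precisely to restore this coercivity before the topological argument is applied; your claim that a homotopy ``rescaling only the couplings'' inherits the entropy-dissipation structure would need to be made concrete at exactly this point, and in doing so you would essentially reconstruct the regularization step.
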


\begin{proof}
    The proof approach follows the methodology outlined in \cite[Theorem 5.5]{Abdel2023Existence}, adapted to our setting.
    Assuming the existence of a solution at time step $m-1$, we aim to prove the existence of a solution at time step $m$.
    To achieve this, we define the vector of unknowns $\mathbf{X}$, stated in \eqref{def.X}.
    Using \Cref{lem:discrete-poisson} and \Cref{lem:Z-zero}, the discrete electric potential and the boundary quasi Fermi potentials for electrons and holes are expressed uniquely in terms of \reviewerOne{the quasi Fermi potentials of electrons, holes and vacancies}.
    By that, we define the vector of unknowns $\mathbf{X}$, stated in \eqref{def.X}, which is expressed in terms of quasi Fermi potentials.
    \reviewerOne{
        Next, we introduce a continuous vector field $\mathbf{P}_m : \mathbb{R}^{\theta} \to \mathbb{R}^{\theta}$ with $\theta = 3\textrm{Card}(\mathcal{T})$ such that  $\mathbf{P}_m (\mathbf{X})=\mathbf{0}$ is equivalent to the continuity equations \eqref{eq:discrete-mass-balance-elec-memristor}--\eqref{eq:discrete-mass-balance-anions-memristor}.
        That is, $\mathbf{X}$ solves the continuity equations if and only if $\mathbf{P}_m(\mathbf{X}) = \mathbf{0}$.
    }
    \reviewerOne{To prove the existence of such $\mathbf{X}$, we first consider} a regularized version of $\mathbf{P}_m$ \reviewerOne{and apply} the fixed-point argument from \Cref{lem.Evans}.
    Uniform boundedness in the regularization parameter is then shown using \Cref{lem.bounds.psi}, \Cref{lem.bounds.phialpha}, and \Cref{lem.bounds.anions}.
    For the Schottky boundary model, the proof of \Cref{lem.bounds.phialpha} relies on \Cref{lem:bound-boundary-density}.
    Finally, by passing to the limit with respect to the regularization parameter, we demonstrate the existence of solutions for the original vector field $\mathbf{P}_m$, completing the proof.
    A schematic of the proof is visualized in \Cref{fig:proof-schematics}.
\end{proof}

\reviewerOne{We end this section with two remarks about extensions of the scheme and its properties.
\begin{remark}\label{rem.time_varying}(Time-dependent boundary conditions)
    Let us informally comment on the extension of our results to time-dependent applied voltages $\overline{V}(\mathbf{x},t)$.
    Define $\mathbb{E}_\mathcal{T}^{m,n}$ as the entropy \eqref{eq:discrete-entropy-memristor} at time $t^m$ taken with respect to the boundary data at time $t^n$.
    Then, with the same computations as above, the entropy-dissipation inequality becomes
    \begin{equation*}
        \frac{\mathbb{E}_{\mathcal{T}}^{m,m} - \mathbb{E}_{\mathcal{T}}^{m-1,m}}{\tau^m} + \mathbb{D}_{\mathcal{T}}^{m,m} \leq c_{\varepsilon, \mathbf{\Omega}} + \varepsilon \mathbb{E}_\mathcal{T}^{m,m}.
    \end{equation*}
    This inequality is sufficient to extend Theorem~\ref{thm.exresult-memristor} by the same proof, as it yields a control of $\mathbb{E}_{\mathcal{T}}^{m,m}$ and $\mathbb{D}_{\mathcal{T}}^{m,m}$. Thus, one can get existence of a discrete solution. However, the stability of this solution is not granted as $\mathbb{E}_{\mathcal{T}}^{m,m}$ and $\mathbb{D}_{\mathcal{T}}^{m,m}$ may not be uniformly bounded with respect to $m$ on finite time intervals. More precisely, in order to obtain
    \begin{equation*}
        \frac{\mathbb{E}_{\mathcal{T}}^{m,m} - \mathbb{E}_{\mathcal{T}}^{m-1,m-1}}{\tau^m} + \mathbb{D}_{\mathcal{T}}^{m,m} \leq c_{\varepsilon, \mathbf{\Omega}} + \varepsilon \mathbb{E}_\mathcal{T}^{m,m},
    \end{equation*}
one needs additional regularity assumptions on $\overline{V}$, which are needed to control new remainder terms coming from the estimate of $\mathbb{E}_{\mathcal{T}}^{m-1,m} - \mathbb{E}_{\mathcal{T}}^{m-1,m-1}$.
\end{remark}
\begin{remark}[Cell centers on boundary]\label{rem.centeronboundary}
    The hypothesis $d_\sigma>0$ for all edges $\sigma$ can be relaxed to cover a wider class of meshes.
    In particular, consider boundary cells with centers located on $\partial\Omega$.
    By letting $d_\sigma \rightarrow 0$, for $\sigma\subset\partial\Omega$ in the scheme \eqref{eq:discrete-model-memristor}, \eqref{eq:discrete-Sedan}, one obtains a limit scheme.
    This limit process could be justified rigorously using the compactness properties provided by the uniform bounds from the entropy-dissipation estimate.
    Up to redefining appropriately the boundary data \eqref{eq:def-integral-average}, the boundedness of the entropy implies that $\psi_K^m -\psi^D_\sigma\rightarrow 0$ as $d_\sigma\rightarrow 0$, since $\tau_\sigma\rightarrow\infty$. Starting from this, one can repeat the elimination process of Section~\ref{sec:discrete-boundary-values}, 
    to find that $n_{\alpha,K}^m-n_{\alpha,\sigma}^m\rightarrow 0$ as $d_\sigma\rightarrow 0$.
    In particular, if $\textbf{x}_K\in \mathcal{E}^C$, the Schottky boundary flux \eqref{eq:discrete-flux-boundary} simply becomes
    \begin{equation*}
        J_{\alpha,K, \sigma}^m = z_\alpha v_\alpha m_\sigma (n_{\alpha, K}^m - n_{\alpha, \sigma}^D), \quad \text{for} \; \sigma \in \mathcal{E}^C.
    \end{equation*}
\end{remark}}

\section{Device simulations} \label{sec:numerics}
In this section, we perform numerical experiments to analyze the different electrode models and configurations for their application in memtransistor and memristive device simulations.
We introduce the simulation parameters and geometry in \Cref{sec:params-geometry}.
Then, in \Cref{sec:num-comparison-contact-models}, we compare the Schottky and ohmic contact model using a simplified 1D model geometry.
Subsequently, \Cref{sec:num-2D-sim} explores more complex 2D geometries with variations in electrode lengths and memristive layer thicknesses to examine the impact of side, top, and mixed contacts on the I-V characteristics.
The discretization schemes are implemented in the open-source software tool \texttt{ChargeTransport.jl} \cite{ChargeTransport}, which is used for all simulations.
\reviewerTwo{
    The nonlinear discrete system is solved with a damped Newton method, using analytical Jacobians obtained via automatic differentiation in the programming language Julia.
    With the previous time step as an initial guess, the solver converges within a few iterations.
    Linear systems are handled by the sparse direct solver \texttt{UMFPACK} \cite{Davis2004}, and absolute and relative tolerances of $10^{-8}$ are imposed to ensure accuracy.
}
\subsection{Model parameters and geometry} \label{sec:params-geometry}
For the numerical experiments, we choose the rescaling factors and non-dimensionalized parameters introduced in \Cref{sec:nondimens-model} such that the resulting solutions correspond to a realistic TMDC-based memristive device with MoS$_2$ as TMDC material.
The parameters for MoS$_2$ and the dimensions of the channel geometry, which we use in our simulations, are provided in \Cref{tab:TMDC-general} and are based on \cite{Spetzler.2024}.
For the simulations in \Cref{sec:num-comparison-contact-models}, the MoS$_2$ layer is modeled by a 1D domain, effectively approximating a side contact configuration.
In \Cref{sec:num-2D-sim}, we extend the geometry to a 2D representation of the MoS$_2$ layer (in the $x-z$ plane of the schematics in \Cref{fig:intro-TMDC}), incorporating contact boundary conditions to simulate side, top, and mixed contact configurations.

\begin{table}[!ht]
    \centering
    {\footnotesize
    \begin{tabular}{|c|l|c|l|}
            \hline
            Physical quantity	& Symbol & Value & Unit  \\\hline&&&\\
            Channel length & $h_\text{C}$& $ 1$ &  \si{\micro\metre} \\[1.0ex]
            Channel width & $h_\text{W}$ & $ 10$ & \si{\micro\metre}\\[1.0ex]
            Channel thickness & $h_\text{T}$ & $ 15$ & \si{\nano\metre} \\[1.0ex]
            Relative permittivity & $\varepsilon_r$ & $10$  & \\[1.0ex]
            Electron mobility	& $\mu_\electrons$ & \num{2.5e-4}  & \si{\metre^2/(\V\s)} \\[0.5ex]
            Hole mobility	& $\mu_\holes$  & \num{2.5e-4} &  \si{\metre^2/(\V\s)}  \\[0.5ex]
            Defect mobility	& $\mu_\ions$  & \num{5e-14}  & \si{\metre^2/(\V\s)}  \\[0.5ex]
            Conduction band-edge energy$\!$	& $E_\electrons$  & $-4.0$ &\si{\electronvolt}  \\[1.0ex]
            Band gap	& $E_\text{g}$  &$1.3$  &  \si{\electronvolt} \\[1.0ex]
            Valence band-edge energy		& $E_\holes$  &$-5.3$  &  \si{\electronvolt} \\[1.0ex]
            Intrinsic defect energy	& $E_\ions$  & $-4.32$ & eV \\[1.0ex]
            Eff.\ conduction band DoS & $N_\electrons$ & $ \num{1e25}$ &\si{\metre^{-3}} \\[1.0ex]
            Eff.\ valence band DoS & $N_\holes$ & $ \num{1.5e25}$ &   \si{\metre^{-3}} \\[1.0ex]
            Max.\ defect density & $N_\ions$ & $ \num{1e28}$ & \si{\metre^{-3}} \\[1.0ex]
            Doping density & $z_CC$ &$\num{1.0e21} $ &  \si{\metre^{-3}}  \\[1.0ex]
            Schottky barrier & $\phi_{0}$ & $0.001$ &  eV \\[1.0ex]
            Boundary quasi Fermi potential & $\varphi_{0}$ & 0 &  V \\[1.0ex]
            Electron recomb.\ velocity	& $v_{\electrons}^*$  & \num{3.6e4} &\si{\metre/\second} \\[1.0ex]
            Hole recomb.\ velocity & $v_{\holes}^*$  & \num{3.2e4}  & \si{\metre/\second} \\[1.0ex]
            Voltage amplitude & $V_\text{max}$ & $ 13 $  & V  \\[1.0ex]
            \hline
    \end{tabular}
    }
    \caption{
        Summary of the MoS$_2$ material and sample-specific parameters from \cite[S$_1$ in Table 2-4]{Spetzler.2024} for
        a constant temperature $T = 300$ K.
        }
    \label{tab:TMDC-general}
\end{table}

For all simulations, we assume that the migrating ionic defects are sulfur vacancies with charge number $z_\ions = 1$.
At the right contact, two piecewise linear voltage cycles are applied, as illustrated in \Cref{fig:1D-different-BC}a, with a voltage sweep rate of $5$ V/s, i.e., a cycle duration of $10.4$ s.
As demonstrated in \cite{Spetzler.2024}, the parameters provided in \Cref{tab:TMDC-general} produce I-V curves that closely match the experimentally measured data from \cite{DaLi.2018}.

\subsection{Comparison of ohmic and Schottky boundary conditions} \label{sec:num-comparison-contact-models}

Both Schottky \cite{Spetzler.2024, Sivan.2022} and ohmic boundary models \cite{Jourdana.2023, Jungel.2023, Herda.2024} have been used in the literature.
In the following, we analyze the impact of these two types of boundary conditions on the device characteristics for a realistic MoS$_2$-based memristive device, as described in \Cref{sec:params-geometry}.

\begin{figure}[!ht]
	\hspace*{-1.6cm}
	\includegraphics[width=1.2\textwidth]{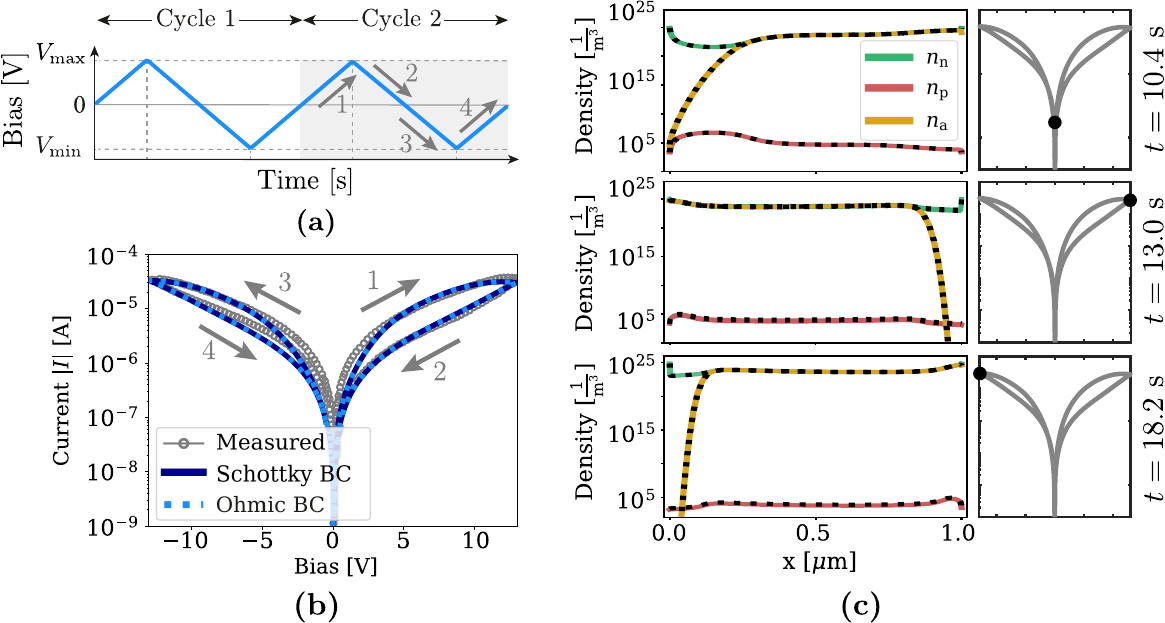}
    \caption{
        Comparison of charge carrier densities and I-V curves for the two contact boundary models \eqref{eq:BC-Schottky-time-dependent} and \eqref{eq:BC-Ohmic-time-dependent}.
        \textbf{(a)} Two consecutive voltage cycles applied at the right contact.
        All subsequent simulations correspond to the shaded second cycle.
        \textbf{(b)} Measured I-V curve from \cite{DaLi.2018} compared with second-cycle I-V simulations using Schottky boundary conditions \eqref{eq:BC-Schottky-time-dependent} as well as ohmic boundary conditions \eqref{eq:BC-Ohmic-time-dependent}.
        Arrows indicate the direction of hysteresis of all curves.
        \textbf{(c)} Electron, hole and defect densities $n_\electrons$, $n_\holes$, $n_\ions$, respectively, during the second cycle at selected times.
        Colored lines correspond to the Schottky model \eqref{eq:BC-Schottky-time-dependent}.
        In contrast, black dotted lines represent the densities computed with ohmic boundary conditions \eqref{eq:BC-Ohmic-time-dependent}.
    }
    \label{fig:1D-different-BC}
\end{figure}

To this end, we conduct two simulations.
For the first simulation, we apply Schottky boundary conditions \eqref{eq:BC-Schottky-time-dependent} with a small Schottky barrier height of $\phi_{0} =0.001\,\mathrm{eV}$.
In the second simulation, we employ ohmic boundary conditions \eqref{eq:BC-Ohmic-time-dependent}.
The simulations use a 1D model geometry representing the MoS$_2$ channel, to approximate a side contact configuration, as illustrated in \Cref{fig:device-schematics-memristor}a.
At the right electrode, piecewise linear voltage cycles are applied, as shown in \Cref{fig:1D-different-BC}a.
As explained in \cite{Spetzler.2024}, we expect the I-V characteristics of the first cycle to differ significantly from the subsequent cycles, primarily due to the varying time scales of ionic defect migration compared to the dynamics of electrons and holes.
Therefore, we focus on the second-cycle I-V curve as it is more representative for the device characteristics.
We define the total current as in \cite[Section 50.8]{Farrell.2017}, including the ionic defect current density contribution.

\begin{figure}[ht!]
	\hspace*{-0.2cm}
	\includegraphics[width=1.02\textwidth]{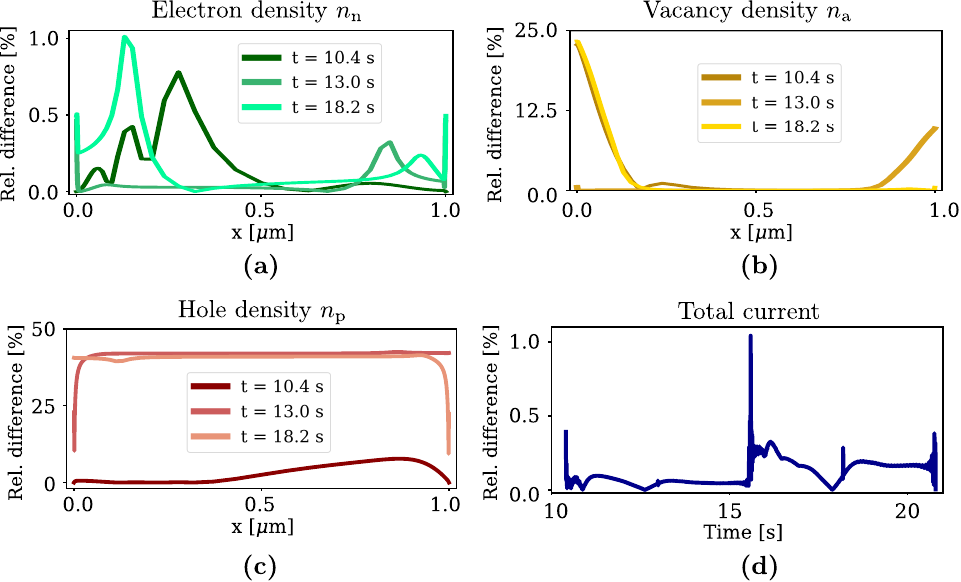}
    \caption{
        Relative differences between the two boundary models \eqref{eq:BC-Schottky-time-dependent} and \eqref{eq:BC-Ohmic-time-dependent} in the calculated carrier densities and the second-cycle total current.
        Spatially resolved differences in \textbf{(a)} electron, \textbf{(b)} vacancy, and \textbf{(c)} hole densities, calculated via \eqref{eq:error-dens}.
        The relative differences are shown for the three selected times in \Cref{fig:1D-different-BC}c.
        \textbf{(d)}  Relative difference in total current over time, as defined by \eqref{eq:error-IV}.
    }
    \label{fig:1D-error}
\end{figure}

\Cref{fig:1D-different-BC}b illustrates the measured I-V curve from \cite{DaLi.2018} alongside the two simulated I-V curves, based on the Schottky (dark blue) and ohmic (light blue, dashed) contact boundary model, respectively.
Both simulations produce almost identical I-V curves that match well with the experimental data.
All three curves show significant hysteresis, with a clockwise direction in the right hysteresis branch and a counterclockwise direction in the left branch, as indicated by the arrows in \Cref{fig:1D-different-BC}b.
The hysteresis is caused by the slow drift and redistribution of mobile vacancies under the applied voltage, leading to the formation and annihilation of a vacancy depletion zone near the contacts as explained previously \cite{Spetzler.2024}.
In \Cref{fig:1D-different-BC}c, we compare the spatially resolved charge carrier densities of electrons $n_\electrons$, holes $n_\holes$, and defects $n_\ions$ for both contact models at three selected times during the second voltage cycle.
No significant deviations between the ohmic and the Schottky boundary cases are visible for any of the three selected times.
At all three points in time, holes are minority carriers with small densities $<10^5\,\mathrm{m}^{-3}$, while the electron density mostly follows the vacancy density with orders of magnitude larger values of up to $10^{25}\,\mathrm{m}^{-3}$.
Hence, the vacancies act as mobile donor-type defects and their concentration locally determines the majority carrier density.
Only in proximity of the left contact ($t = 10.4\,\mathrm{s}$ and $t = 18.2\,\mathrm{s}$) and the right contact ($t = 13\,\mathrm{s}$) the electron and vacancy densities are orders of magnitude lower compared to the rest of the spatial domain.
These vacancy depletion zones are the result of the voltage-driven vacancy dynamics, which is consistent with a previous analysis of the memristive switching mechanism \cite{Spetzler.2024}.

To quantify the errors in the carrier densities and the I-V curves between the two contact boundary models, we define relative differences. The relative difference in the carrier densities for the Schottky $n_{\alpha, \text{SC}}$ and the ohmic boundary models $n_{\alpha, \text{OC}}$ is defined for $\alpha \in \{ \electrons, \holes, \ions\}$ and a fixed time $t^* \geq 0$ as
\begin{align} \label{eq:error-dens}
\left| n_{\alpha, \text{SC}}(\mathbf{x}, t^*) - n_{\alpha, \text{OC}}(\mathbf{x}, t^*) \right| / \left| \, n_{\alpha, \text{SC}}(\mathbf{x}, t^*) \right|,
\quad
\mathbf{x} \in \mathbf{\Omega},
\end{align}
and the relative difference in the total current for the Schottky $I_{\text{SC}}$ and the ohmic boundary model $ I_{\text{OC}}$ is given by
\begin{align}\label{eq:error-IV}
\left| I_{\text{SC}}(t) - I_{\text{OC}}(t) \right| / \left| \, I_{\text{SC}}(t) \right|,
\quad
t \geq 0.
\end{align}

\Cref{fig:1D-error}a-c depicts the relative differences in the charge carrier densities \eqref{eq:error-dens} as a function of space for the three selected times, corresponding to those in \Cref{fig:1D-different-BC}c.
Additionally, \Cref{fig:1D-error}d shows the relative difference in the total current as a function of time \eqref{eq:error-IV}.

Overall, minor discrepancies are observed in the electron densities (\Cref{fig:1D-error}a) and the currents (\Cref{fig:1D-error}d) with relative errors remaining below $1$ \%.
In contrast, significantly larger errors are visible in the vacancy density (\Cref{fig:1D-error}b) and the hole density (\Cref{fig:1D-error}c), with relative differences reaching up to nearly $25$ \% and $40$ \%, respectively.
However, the large differences in the vacancy density are limited to the depletion zones at the left and right boundaries of the simulation domain, where the absolute density values are comparatively small.
Similarly, the large errors in the hole density are negligible since the holes, being minority carriers, contribute far less to the total current than the electrons.

Hence, for our model setups with small Schottky barrier heights  $\phi_{0} = 0.001$  eV, it can be concluded that the ohmic boundary conditions provide a valid approximation of the more complicated Schottky contact boundary model even though larger discrepancies are observed for the minority carriers near the electrodes.
In scenarios with negligible Schottky barriers, charge transport in memristive devices can be adequately described using the simplified Dirichlet boundary conditions used in the ohmic boundary model \eqref{eq:BC-Ohmic-time-dependent}.
However, when Schottky barrier heights become significant, models employing purely Dirichlet boundary conditions, as in \cite{Jourdana.2023, Jungel.2023, Herda.2024}, may fail to capture the device characteristics accurately.
In such cases, the more sophisticated Robin-type boundary conditions of the Schottky boundary model \eqref{eq:BC-Schottky-time-dependent} are more appropriate.

\reviewerTwo{
    The choice between ohmic and Schottky models is thus barrier-dependent: ohmic boundaries align with current mathematical analysis results for weak and discrete solutions (approximation for low barriers), while Schottky boundaries can provide greater physical accuracy in high-barrier regimes at the expense of additional nonlinearities in the model.
}

\subsection{Exploring 2D contact configurations} \label{sec:num-2D-sim}

The choice of a suitable contact and device geometry is a major step in the design of memristive devices and memtransistors.
This includes in the simplest case the geometry of the semiconductor-metal interface as well as the thickness of the memristive material.
However, implementing realistic geometries in a numerical model typically comes at the expense of high complexity and computational costs as well as an extensive evaluation process \cite{Allain.2015, Kang.2014, Wang.2022}.
Even for idealized model geometries comprising only the memristive material and the electrodes, several types of contact geometries can be devised that correspond to reported experimental cases \cite{Kang.2014}.
In the following, we analyze under which conditions these geometries can lead to similar results using 2D model geometries and if they can be simplified by computationally inexpensive 1D geometries.

\begin{figure}[!ht]
	\hspace*{-2.5cm}
	\includegraphics[width=1.3\textwidth]{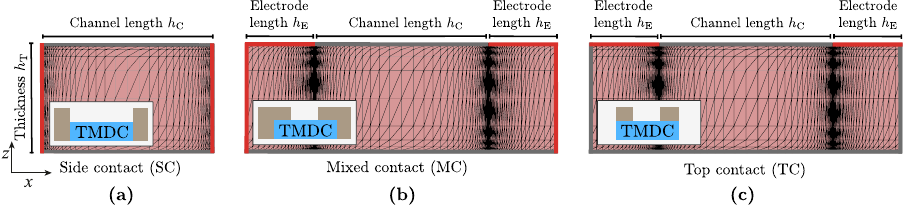}
    \caption{Three different contact configurations.
    More precisely, we have \textbf{(a)} the side contact (SC), \textbf{(b)} the mixed contact (MC), and \textbf{(c)} the top contact (TC).
    The metal-semiconductor interface\reviewerTwo{, where Schottky boundary conditions are imposed,} is highlighted in red.
    \reviewerTwo{Homogeneous Neumann boundary conditions are applied to the remaining parts of the boundary.}
    In the following, we vary the memristive layer thickness $h_{\text{T}}$ and the top electrode length $h_{\text{E}}$ while the channel length $h_\text{C}$ stays the same.}
    \label{fig:2D-contactTypes}
\end{figure}

We implement three different 2D model geometries, illustrated in \Cref{fig:2D-contactTypes}, to represent the $x-z$ plane cross-sections of the three 3D contact configurations, introduced in \Cref{fig:device-schematics-memristor}.
Schottky boundary conditions \eqref{eq:BC-Schottky-time-dependent} are applied at the red-highlighted metal-semiconductor contacts in \Cref{fig:2D-contactTypes}a-c.
\reviewerTwo{Homogeneous Neumann boundary conditions are applied to the remaining parts of the boundary.}
Each configuration is characterized by the thickness of the memristive material  $h_\mathrm{T}$, the channel length $h_\mathrm{C}$, defined as the distance between the top left and right electrode, and the top electrode length $h_\text{E}$.
For the side contact configuration (\Cref{fig:2D-contactTypes}a), we have $h_\text{E} = 0$ nm, as the boundary conditions are exclusively applied at the sides.
In contrast, the mixed contact configuration (\Cref{fig:2D-contactTypes}b) and the top contact configuration (\Cref{fig:2D-contactTypes}c) feature boundary conditions applied to the top of the domain over a length $h_\mathrm{E}$ (electrode length) at left and right contact side.
The mixed configuration includes additional side contacts, while the top contact configuration considers only the top electrodes without side electrodes.
For all simulations, the channel length $h_\text{C}$ is kept constant, while the electrode length $h_\text{E}$ and the memristive layer thickness $h_\text{T}$ are varied systematically.
Moreover, we consider a zero applied voltage at the left and simulate two piecewise linear voltage cycles, as depicted in \Cref{fig:1D-different-BC}a, applied at the right contact.

To measure the error between the mixed and either the side or top contact configuration, we define the relative $l^2$ error of the total current $I_i(t)$ over the two simulated two voltage cycles as
\begin{align} \label{eq:sim-error}
    e_{i, j}= \sqrt{\sum_{k=1}^M \Bigl| |I_{i}(t_k)| - |I_{j}(t_k)| \Bigr|^2} \Big/ \sqrt{\sum_{k=1}^M |I_{i}(t_k)|^2},
\end{align}
where the indices $i \neq j$ with $i, j \in \{ {\text{SC}, \text{MC}, \text{TC}}\}$ represent the different contact configurations: side (SC), mixed (MC), and top contacts (TC).
Furthermore, $M$ refers to the number of time steps.
The error functional \eqref{eq:sim-error} accounts for the overall deviation between the current curves throughout the simulation rather than just capturing the pointwise maximum deviation at a single time step.
In the following, we analyze the defined $l^2$ error as a function of two model parameters, varied during the simulations: the ratio of the electrode $h_\text{E}$ and the channel length $h_\text{C}$, and the thickness of the flake $h_\text{T}$.
For example, a ratio of $h_\text{E}/h_\text{C}=30$\% indicates that 30\% of the fixed channel length has been added on either side of the channel, corresponding to a top or mixed contact configuration.

\begin{figure}[!ht]
	\begin{subfigure}[b]{0.49\textwidth}
       \hspace*{-0.4cm} \includegraphics[width =2.1\textwidth]{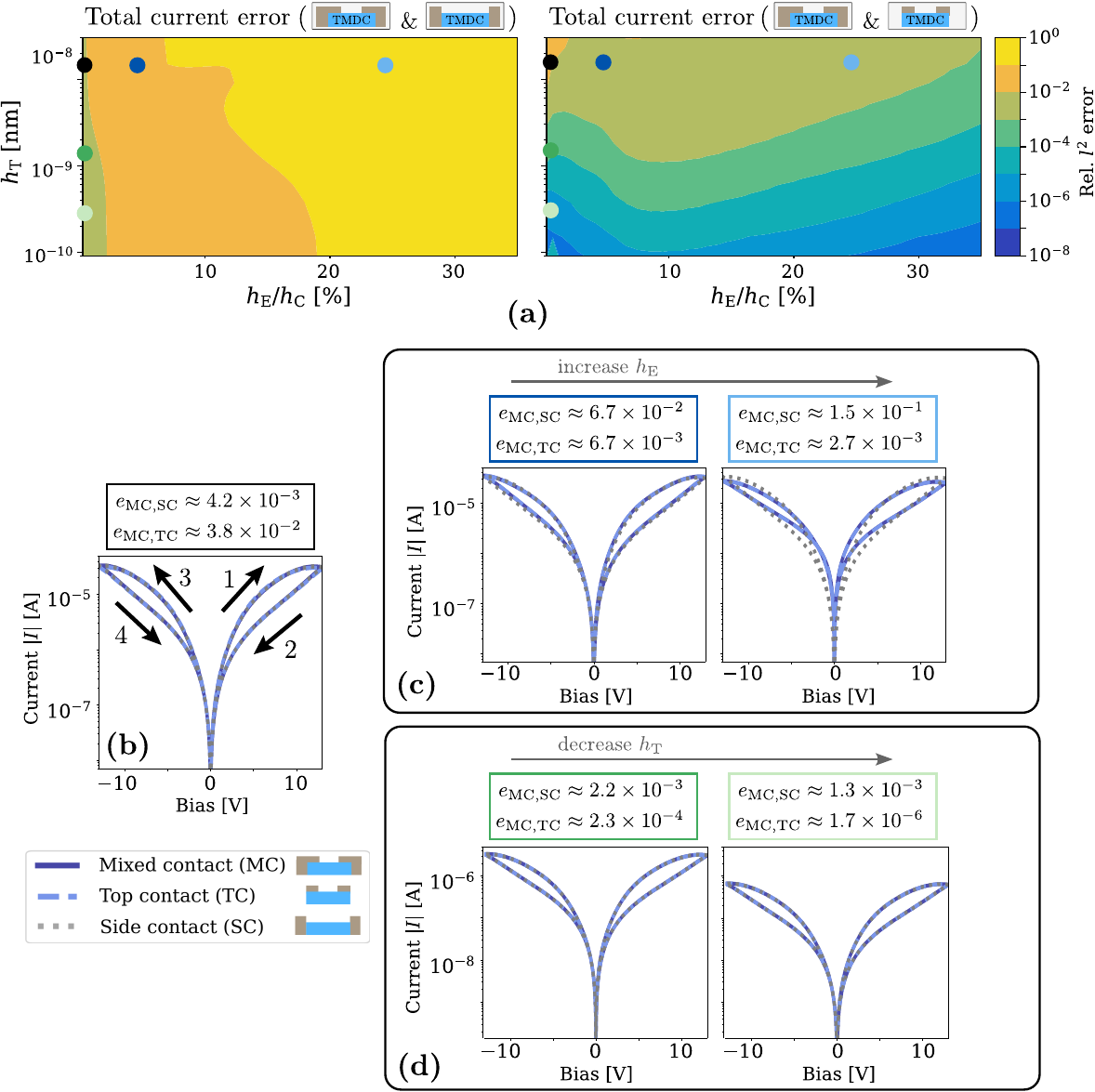}
    \end{subfigure}
    \caption{
        \textbf{(a)} Relative $l^{2}$ error in the total current for varying electrode lengths and MoS$_2$ thicknesses: comparing mixed vs.\ top contact (left) and mixed vs.\ side contact (right).
        Second-cycle I-V curves are shown for mixed (darkblue), top (dashed lightblue), and side contacts (dotted gray).
        We have the I-V curves for
        \textbf{(b)} negligible $h_\text{E}$ and $h_{\text{T}} = 15$ nm, matching with experimental observations \cite{Spetzler.2024},
        \textbf{(c)} for fixed $h_{\text{T}} = 15$ nm, with increasing electrode length $h_\text{E}$ from left to right,
        \textbf{(d)} for fixed and negligible $h_\text{E}$, with decreasing thickness $\text{h}_\text{T}$ from left to right.
        The arrows in (a) indicate the hysteresis direction.
        The relative $l^{2}$ errors are also stated in colored boxes, corresponding to the dots in (a).
        }
        \label{fig:geometry-study}
\end{figure}

\Cref{fig:geometry-study}a illustrates the relative $l^2$ error of the total current $I(t)$  between the mixed and either the side (left panel) or top contact configuration (right panel).
The relative $l^2$ errors are plotted logarithmically as a function of the ratio $h_\text{E}/h_\text{C}$ ($x$-axis) and the flake thickness $h_\text{T}$ ($y$-axis).
Selected second-cycle I-V curves corresponding to specific geometry setups are shown in \Cref{fig:geometry-study}b-d, with the geometries marked as colored dots in \Cref{fig:geometry-study}a.
The I-V curves are illustrated for the mixed (dark blue), top (dashed light blue), and side contact configurations (dotted gray).
In \Cref{fig:geometry-study}b, the initial state I-V characteristics are depicted for negligible $h_\text{E}$ and $h_{\text{T}} = 15$ nm, with indicated hysteresis directions matching with the original device geometry setup from \cite{DaLi.2018}, resulting in the curves shown in \Cref{fig:1D-different-BC}b.
\Cref{fig:geometry-study}c displays second-cycle I-V curves for fixed $h_{\text{T}} = 15$ nm, with increasing electrode length $h_\text{E}$ from left to right, demonstrating the effect of electrode scaling.
In contrast, \Cref{fig:geometry-study}d explores the impact of decreasing the flake thickness $h_\text{T}$ (from left to right) for fixed and negligible $h_\text{E}$.
The relative $l^{2}$ errors for the I-V curves are also reported in colored boxes in \Cref{fig:geometry-study}b-d, corresponding directly to the dots in \Cref{fig:geometry-study}a.

\begin{figure}[!ht]
	\begin{subfigure}[b]{0.53\textwidth}
       \hspace*{-1.9cm} \includegraphics[width =2.3\textwidth]{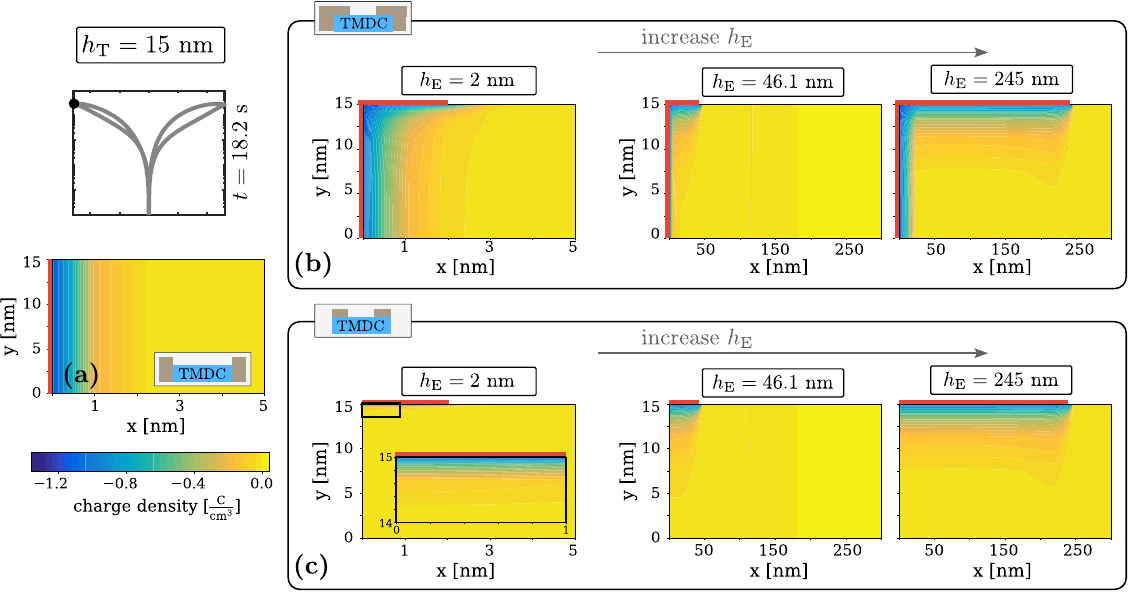}
    \end{subfigure}
    \caption{
        Spatial distribution of the space charge density, i.e., the right-hand side of Poisson's equation at the left contact for a memristive layer thickness of $15$ nm (actual thickness from measurements \cite{DaLi.2018}).
        The charge density is shown at a time $t = 18.2$ s, corresponding to an applied voltage of $V = V_{\text{min}} = -13$ V during the second voltage cycle.
        \textbf{(a)} For the side contact configuration, the charge density is displayed for negligible electrode lengths, providing a baseline for comparison.
        The charge density is visualized for three different electrode lengths, increasing progressively from left to right for the \textbf{(b)} mixed and \textbf{(c)} top contact configuration.
        Note that the figure in (a) and the leftmost panels in (b) and (c) feature a different $x$-axis scaling compared to the other panels, as they represent the case of negligible electrode lengths.
    }
    \label{fig:geometry-study-space-charge-dens-thickness-1p5e-8}
\end{figure}

From \Cref{fig:geometry-study}a and \Cref{fig:geometry-study}c (left), it is apparent that for a fixed flake thickness, when the contact covers only a small portion of the device's top, the I-V curves for all three contact configurations (side, mixed, and top) are nearly identical, with relative $l^2$ errors below $10^{-1}$.
This observation holds even as the flake thickness decrease, as shown in \Cref{fig:geometry-study}d.
However, when a larger portion of the device's top is covered by electrodes, the relative $l^2$ errors in \Cref{fig:geometry-study}a show opposing trends.
While the error between the mixed and top configurations (\Cref{fig:geometry-study}a, right) remains small, the error between the mixed and side contact configuration (\Cref{fig:geometry-study}a, left) increases and appears to level off eventually.
These observations have several implications:
First, the truly 2D configurations (top and mixed) lead to different results compared to a simple 1D setup.
Second, for the total current, the mixed and top configurations yield similar results, suggesting that most of the current flows through the top contact region in both cases.

In \Cref{fig:geometry-study-space-charge-dens-thickness-1p5e-8} and \Cref{fig:geometry-study-space-charge-dens-thickness-1p5e-9}, we illustrate the space charge density, defined as the right-hand side of Poisson's equation \eqref{eq:model-poisson-memristor}, near the left electrode for two different memristive layer thicknesses.
The space charge density is shown at a fixed time of $t = 18.2$, corresponding to the second cycle with an applied voltage of $V = V_{\text{min}} = -13$ V.
This choice of time corresponds to a vacancy depletion zone at the left contact, as seen in \Cref{fig:1D-different-BC}c (bottom).

\begin{figure}[ht!]
	\begin{subfigure}[b]{0.53\textwidth}
       \hspace*{-1.9cm} \includegraphics[width =2.3\textwidth]{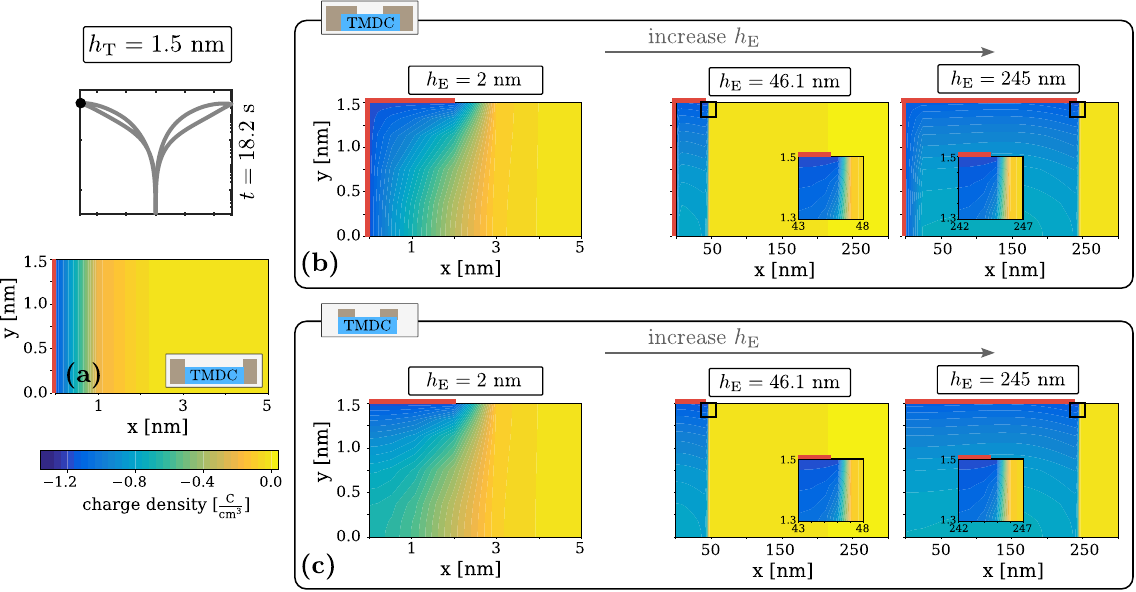}
    \end{subfigure}
    \caption{
        Spatial distribution of the space charge density, i.e., the right-hand side of Poisson's equation at the left contact for a memristive layer thickness of $1.5$ nm.
        The charge density is shown at a time $t = 18.2$ s, corresponding to an applied voltage of $V = V_{\text{min}} = -13$ V during the second voltage cycle.
        \textbf{(a)} For the side contact configuration, the charge density is displayed for negligible electrode lengths, providing a baseline for comparison.
        The charge density is visualized for three different electrode lengths, increasing progressively from left to right for the \textbf{(b)} mixed and \textbf{(c)} top contact configuration.
        Note that the figure in (a) and the leftmost panels in (b) and (c) feature a different $x$-axis scaling compared to the other panels, as they represent the case of negligible electrode lengths.
    }
    \label{fig:geometry-study-space-charge-dens-thickness-1p5e-9}
\end{figure}

\Cref{fig:geometry-study-space-charge-dens-thickness-1p5e-8} shows the space charge density for a thickness of $h_\text{T} = 15$ nm, corresponding to the actual MoS$_2$ thickness from the measurements in \cite{DaLi.2018}, while \Cref{fig:geometry-study-space-charge-dens-thickness-1p5e-9} shows the space charge density for a smaller thickness of $h_\text{T} = 1.5$ nm.
In both figures, \Cref{fig:geometry-study-space-charge-dens-thickness-1p5e-8}a and \Cref{fig:geometry-study-space-charge-dens-thickness-1p5e-9}a, we visualize the charge density for the side contact configuration, where we only consider a negligible electrode length. Since the charge flows only in the horizontal direction for the side contact, this setup can be viewed as an artificially extended 1D configuration.
In \Cref{fig:geometry-study-space-charge-dens-thickness-1p5e-8}b and \Cref{fig:geometry-study-space-charge-dens-thickness-1p5e-9}b, we show the space charge density for the mixed contact, and in \Cref{fig:geometry-study-space-charge-dens-thickness-1p5e-8}c and \Cref{fig:geometry-study-space-charge-dens-thickness-1p5e-9}c for a top contact configuration.
We visualize the space charge density for increasing electrode length from left to right for both the mixed and top contact configurations.
Specifically, we consider three different cases for the ratio $h_\text{E}/h_\text{T}$: i) $h_\text{E}/h_\text{T} < 1 $ \%, ii) $h_\text{E}/h_\text{T} = 4.61 $ \% and iii) $h_\text{E}/h_\text{T} = 24.5 $ \%.

For a negligibly small electrode length of $h_\text{E} = 2$ nm, the space charge density distributions across all contact types remain similar.
Moreover, no major differences are observed between the mixed (\Cref{fig:geometry-study-space-charge-dens-thickness-1p5e-8}b, \Cref{fig:geometry-study-space-charge-dens-thickness-1p5e-9}b) and top contact configurations (\Cref{fig:geometry-study-space-charge-dens-thickness-1p5e-8}c, \Cref{fig:geometry-study-space-charge-dens-thickness-1p5e-9}c) for both memristive layer thicknesses, except for a slight accumulation of charge at the actual left boundary in the mixed contact configuration.
This observation further emphasizes the similarities in charge distribution between the top and mixed configurations.
\Cref{fig:geometry-study-space-charge-dens-thickness-1p5e-9} provides further insight into the behavior of the space charge density in thinner devices.
In these configurations, the charge is concentrated directly beneath the contacts.
This charge accumulation explains the decreased relative $l^2$ error between the top and mixed contact configurations, as observed in \Cref{fig:geometry-study}a.
More precisely, the figures of the space charge densities support the observations from the error plots:
Regardless of the memristive layer thickness, for small electrode lengths (i.e., $ <10$ \% of the channel length), 1D models and simulations accurately approximate more complex multi-dimensional calculations.
Interestingly, there is no significant difference between using mixed or top contacts in actual 2D setups.
The error between these two contact configurations diminishes as the thickness decreases due to the charge accumulation directly beneath the electrodes.

\section{Conclusion} \label{sec:conclusion}

In this work, we numerically investigated a vacancy-assisted drift-diffusion charge transport model for 2D TMDC-based memristive devices with various electrode models.
First, we developed a novel, physics-preserving, implicit-in-time finite volume scheme for the numerical discretization of the model equations.
We demonstrated the unconditional stability, structure preservation, and existence of discrete solutions in multiple dimensions using the entropy method, thereby ensuring the mathematical robustness of the numerical method.
Building on this, we extended the results from \cite{Abdel2023Existence} to incorporate the Schottky boundary model, addressing it for the first time in the mathematical literature.

Next, we applied the model to explore various contact configurations through a series of numerical simulations. The results provide insights into the conditions under which simplified one-dimensional models are adequate approximations, and when a multi-dimensional approach is required to accurately capture the complexity of charge transport in these systems.
For small electrode lengths, regardless of the memristive layer thickness, we observed that 1D simplifications reliably approximate complex multi-dimensional simulations.
Additionally, we found that the mixed and top contact configurations yield similar results across different electrode and memristive layer thicknesses, as most of the current flows through the top contact region in both cases.
Our findings lay a solid foundation for further refining the modeling techniques for TMDC-based memristive devices and memtransistors.


\vspace{1em}
\noindent
\textit{Acknowledgments}
This work was partially supported by
the Leibniz competition 2020 (NUMSEMIC, J89/2019),
the Deutsche Forschungsgemeinschaft (DFG, German Research Foundation) under Germany's Excellence Strategy -- The Berlin Mathematics Research Center MATH+ (EXC-2046/1, project ID: 390685689),
the LabEx CEMPI (ANR-11-LABX0007)
as well as
the Carl-Zeiss Foundation via the Project Memwerk
and the Deutsche Forschungsgemeinschaft (DFG, German Research Foundation) -- Project-ID 434434223 – SFB 1461.
CCH and MH acknowledge the support of the CDP C2EMPI, together with the French State under the France-2030 programme, the University of Lille, the Initiative of Excellence of the University of Lille, the European Metropolis of Lille for their funding and support of the R-CDP-24-004-C2EMPI project. Part of the research was conducted within the context of the Inria - WIAS Berlin ARISE Associate Team.


\vspace{1em}
\noindent
\textit{Author contributions}
All authors read, edited and approved the manuscript.
DA: Data curation, Investigation, Software, Writing -- original draft.
MH: Formal analysis, Writing -- review \& editing.
MZ: Funding acquisition, Writing -- review \& editing.
CCH: Writing -- review \& editing.
BS: Software, Writing -- original draft, Supervision.
PF: Software, Writing -- original draft, Funding acquisition, Supervision.

\appendix

\section{Auxiliary results} \label{app:auxiliary-results}

To prove the existence result in \Cref{thm.exresult-memristor} we need several auxiliary results which are stated in the following.
All of them are based on an adaption of the results in \cite{Abdel2023Existence}.
In the following, we define the vector of unknowns
\begin{equation}\label{def.X}
    \mathbf{X} = \Bigl( (\varphi_{\electrons,K}-\varphi_{\electrons,K}^D)_{K\in\mathcal{T}}, (\varphi_{\holes,K}-\varphi_{\holes,K}^D)_{K\in\mathcal{T}},(\varphi_{\ions,K}-\psi_{K}^D)_{K\in\mathcal{T}} \Bigr).
\end{equation}
\subsection{A priori bounds}

First, we start with the \textit{a priori bounds} which follow from a bound on the entropy and dissipation (given by the entropy-dissipation inequality and a discrete Grönwall inequality).

\begin{lemma}(Bound for electrostatic potential; from \cite{Abdel2024Thesis}[Lemma 4.11]) \label{lem.bounds.psi}
    Assume that there exists a constant $M_E>0$ such that ${\mathbb E}_\mathcal{T}(\mathbf{X})\leq M_E$.
    Then, there exists some $M_B>0$ depending on $M_E$, $\lambda$, $\mathcal{T}$, and $\psi^D$ such that
    \begin{equation}\label{bounds.psi}
        -M_B \leq \psi_K  \leq M_B, \quad \forall K\in\mathcal{T}.
    \end{equation}
    \QEDB
\end{lemma}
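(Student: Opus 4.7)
The plan is to extract from the assumed entropy bound a discrete $H^1_0$-type seminorm bound on $\mathbf{v} := \boldsymbol{\psi} - \boldsymbol{\hat{\psi}}^D$ and then turn it into a pointwise estimate via a telescoping path argument along the mesh. Since every summand in the discrete entropy \eqref{eq:discrete-entropy-memristor} is non-negative, the hypothesis $\mathbb{E}_{\mathcal{T}}(\mathbf{X})\leq M_E$ immediately yields
\[
\sum_{\sigma\in\mathcal{E}}\tau_\sigma \bigl(D_\sigma(\boldsymbol{\psi}-\boldsymbol{\hat{\psi}}^D)\bigr)^2 \;\leq\; \frac{2 M_E}{\lambda^2}.
\]
Moreover, for every face $\sigma\in\mathcal{E}^C$ with $\sigma\subset\overline{K}$, the Dirichlet assignment \eqref{eq:boundary-ohmic} in the ohmic case, or \eqref{eq:Schottky-psi-sigma} in the Schottky case, ensures $\psi_\sigma = \hat{\psi}_\sigma^D$, so that $D_{K,\sigma}\mathbf{v} = -v_K$ at contact faces. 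The displayed sum is therefore a genuine $H^1_0$-type quantity for $\mathbf{v}$, where $\mathbf{v}$ ``vanishes'' on the non-empty Dirichlet part $\mathcal{E}^C$.

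Next, I would fix an arbitrary cell $K\in\mathcal{T}$ and choose a finite chain of neighbouring cells $K = K_0, K_1, \dots, K_p$ terminating in a cell $K_p$ which admits a contact face $\sigma_p\in\mathcal{E}^C$. Such a chain exists because $\mathbf{\Omega}$ is connected and $\mathcal{E}^C$ is non-empty in both boundary models. Telescoping and applying the triangle inequality,
\[
|v_K| \;\leq\; \sum_{i=1}^{p} \bigl|D_{K_{i-1},\sigma_i}\mathbf{v}\bigr| \;+\; \bigl|D_{K_p,\sigma_p}\mathbf{v}\bigr|, \qquad \sigma_i = K_{i-1}\,|\,K_i,
\]
and a single application of Cauchy--Schwarz then yields
\[
|v_K| \;\leq\; \Bigl(\tau_{\sigma_p}^{-1} + \sum_{i=1}^{p} \tau_{\sigma_i}^{-1}\Bigr)^{1/2} \Bigl(\sum_{\sigma\in\mathcal{E}}\tau_\sigma |D_\sigma \mathbf{v}|^2\Bigr)^{1/2} \;\leq\; C_{\mathcal{T}}\sqrt{\frac{2M_E}{\lambda^2}},
\]
where the mesh-dependent constant $C_{\mathcal{T}}$ depends only on a maximal chain length and on $\min_{\sigma\in\mathcal{E}}\tau_\sigma$.

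Finally, combining this with the trivial bound $|\hat{\psi}^D_K|\leq \|\psi^D\|_{L^\infty(\mathbf{\Omega})} + |\overline{V}|$, valid since $\psi^D\in W^{1,\infty}(\mathbf{\Omega})$ and the applied voltage is a fixed real constant, we obtain $|\psi_K| \leq M_B$ with the claimed dependence on $M_E$, $\lambda$, $\mathcal{T}$, and $\psi^D$. The only mildly delicate points are the existence of the connecting chain and the tracking of the constant $C_{\mathcal{T}}$; but since we work on a fixed admissible mesh and $\mathcal{E}^C$ is non-empty by the physical set-up, these amount to combinatorial bookkeeping rather than any genuine analytic obstruction. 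An alternative route would be to invoke a discrete Poincaré inequality (relying on the non-empty Dirichlet part $\mathcal{E}^C$) to derive first an $L^2$-bound on $\mathbf{v}$ and then exploit the finite-dimensional equivalence of $L^2$ and $L^\infty$ norms on the fixed mesh $\mathcal{T}$; both routes produce the same qualitative conclusion with comparable mesh-dependent constants.
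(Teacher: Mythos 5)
Your argument is correct: the non-negativity of every summand in \eqref{eq:discrete-entropy-memristor} does give the discrete $H^1_0$-type bound $\sum_{\sigma}\tau_\sigma\bigl(D_\sigma(\boldsymbol{\psi}-\boldsymbol{\hat{\psi}}^D)\bigr)^2\leq 2M_E/\lambda^2$, the contact faces do contribute $\tau_\sigma v_K^2$ because \eqref{eq:boundary-ohmic} and \eqref{eq:Schottky-psi-sigma} force $\psi_\sigma=\hat{\psi}^D_\sigma$ there, and the chain/Cauchy--Schwarz step (or equivalently a discrete Poincar\'e inequality plus norm equivalence on the fixed mesh) turns this into the claimed $L^\infty$ bound with a constant depending only on $M_E$, $\lambda$, $\mathcal{T}$ and $\psi^D$ (the constant $\overline{V}$ is absorbed into the data). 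The paper itself omits the proof, citing Lemma 4.11 of the referenced thesis, and your route is precisely the standard argument used there and in the predecessor work \cite{Abdel2023Existence}, so there is nothing substantive to flag.
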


For the following two results, we introduce the notation
\begin{align*}
    {\mathbb D}_{\mathcal{T},\electrons}(\mathbf{X} )= \, \frac{ \delta_\electrons }{2 \nu } &\sum_{\sigma \in \mathcal{E} }\tau_\sigma \overline{n}_{\electrons, \sigma}  ( D_{\sigma} \boldsymbol{\varphi}_{\electrons})^2, \quad
    ~
    {\mathbb D}_{\mathcal{T},\holes}(\mathbf{X} )= \frac{ \delta_\electrons \delta_\holes }{2 \nu } \sum_{\sigma \in \mathcal{E} }\tau_\sigma \overline{n}_{\holes, \sigma}  ( D_{\sigma} \boldsymbol{\varphi}_{\holes})^2, \\
    ~
    &{\mathbb D}_{\mathcal{T},\ions}(\mathbf{X})=\frac{ z_{\ions}^2 }{2}\sum_{\sigma \in \mathcal{E} }\tau_\sigma \overline{n}_{\ions,\sigma}  ( D_{\sigma} \boldsymbol{\varphi}_{\ions})^2.
\end{align*}
\begin{lemma}(Bounds for electron and hole quasi Fermi potentials)\label{lem.bounds.phialpha}
    Let $\alpha \in \{ \electrons, \holes \}$.
    Assume that there exists $M_E>0$ such that ${\mathbb E}_\mathcal{T}(\mathbf{X})\leq M_E$ and $M_{D}>0$ such that ${\mathbb D}_{\mathcal{T}}(\mathbf{X})\leq M_{D}$.
    Then, there exists some $ M_B > 0$ depending on $M_E$, $M_D$, $\mathcal{T}$, $\psi^D$, $\varphi^D$, $z_\alpha$, and the dimensionless constants $\nu$, $\delta_\electrons$ and $\delta_\holes$ such that
    \begin{equation}\label{bounds.phialpha}
        - M_B \leq \varphi_{\alpha,K} \leq M_B, \quad \forall K\in\mathcal{T}.
    \end{equation}
\end{lemma}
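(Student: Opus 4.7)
The strategy follows \cite{Abdel2023Existence}, with the key new ingredient being the treatment of the Schottky trace of $\varphi_\alpha$, which is an unknown of the scheme rather than a prescribed Dirichlet datum. Throughout, I rely on the bound $|\psi_K| \leq M_B^\psi$ from \Cref{lem.bounds.psi}, which holds under the entropy bound.

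The first step is to control $\varphi_{\alpha,\sigma}^m$ on each face $\sigma \in \mathcal{E}^C$. In the ohmic case this is immediate because $\varphi_{\alpha,\sigma}^m = \varphi_\sigma^D$ by \eqref{eq:boundary-ohmic}, so the $W^{1,\infty}$ bound on $\varphi^D$ suffices. In the Schottky case, I use that the dissipation bound contains the non-negative contact contribution
\[
\sum_{\sigma \in \mathcal{E}^C} v_\alpha m_\sigma \bigl(\mathcal{F}_\alpha^{-1}(n_{\alpha,\sigma}^m) - \mathcal{F}_\alpha^{-1}(n_{\alpha,\sigma}^D)\bigr)\bigl(n_{\alpha,\sigma}^m - n_{\alpha,\sigma}^D\bigr) \leq C\,M_D,
\]
so that each individual summand is bounded by a mesh-dependent constant. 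Setting $a = z_\alpha(\varphi_{\alpha,\sigma}^m - \psi_\sigma^m)$ and $b = z_\alpha(\varphi_\sigma^D - \psi_\sigma^D)$, so that $n_{\alpha,\sigma}^m = \mathcal{F}_\alpha(a)$ and $n_{\alpha,\sigma}^D = \mathcal{F}_\alpha(b)$, the inequality reads $(a-b)(\mathcal{F}_\alpha(a) - \mathcal{F}_\alpha(b)) \leq M$. Since $\underline{b} \leq b \leq \overline{b}$ is controlled by $\|\varphi^D\|_{W^{1,\infty}}$ and $\|\psi^D\|_{W^{1,\infty}}$, \Cref{lem:bound-boundary-density} applies and yields $|a| \leq C'$. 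Combined with $\psi_\sigma^m = \tilde{\psi}_\sigma^D$ being bounded by $\|\psi^D\|_{W^{1,\infty}} + |\overline{V}|$, this delivers a uniform bound on $|\varphi_{\alpha,\sigma}^m|$ for every $\sigma \in \mathcal{E}^C$.

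Once the trace is controlled, I propagate the estimate into the domain via the bulk dissipation $\sum_\sigma \tau_\sigma \overline{n}_{\alpha,\sigma}^m (D_\sigma \boldsymbol{\varphi}_\alpha^m)^2 \leq M_D$, reproducing the strategy of \cite{Abdel2023Existence}. Given any $K \in \mathcal{T}$, pick a discrete path of neighboring cells from $K$ to a face $\sigma^\star \in \mathcal{E}^C$; writing $\varphi_{\alpha,K}^m - \varphi_{\alpha,\sigma^\star}^m$ as a telescoping sum of the $D_\sigma \boldsymbol{\varphi}_\alpha^m$ along this path and applying Cauchy--Schwarz yields
\[
\bigl|\varphi_{\alpha,K}^m - \varphi_{\alpha,\sigma^\star}^m\bigr|^2 \leq \Bigl(\sum_{\sigma\text{ on path}} \frac{1}{\tau_\sigma \overline{n}_{\alpha,\sigma}^m}\Bigr)\cdot M_D.
\]

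The main obstacle is the uniform lower bound on the edge weights $\overline{n}_{\alpha,\sigma}^m$ appearing in the Cauchy--Schwarz factor; this is the delicate step, handled exactly as in \cite{Abdel2023Existence} by an inductive propagation along the path. The entropy bound $H_\alpha(n_{\alpha,K}^m, n_{\alpha,K}^D) \leq M_E/m_K$, together with \Cref{lem.bounds.psi} and the boundary bound from the previous step, supplies a positive lower bound on $n_{\alpha,\cdot}^m$ in every cell adjacent to $\mathcal{E}^C$. This lower bound then propagates face-by-face through the finite mesh by alternating: the dissipation estimate to control $D_\sigma \boldsymbol{\varphi}_\alpha^m$, the bound on $\psi_\cdot^m$, and the state equation \eqref{eq:discrete-state-eq-m} to transfer the bound to $n_{\alpha,\cdot}^m$ at the next cell and hence to $\overline{n}_{\alpha,\sigma}^m$ at the next face. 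Since $\mathcal{T}$ is finite, finitely many iterations cover the whole mesh and yield the desired constant $M_B$ with the stated dependencies on $M_E$, $M_D$, $\mathcal{T}$, $\|\varphi^D\|_{W^{1,\infty}}$, $\|\psi^D\|_{W^{1,\infty}}$, $z_\alpha$, $\nu$, $\delta_\electrons$ and $\delta_\holes$.
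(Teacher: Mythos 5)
Your proposal is correct and follows essentially the same route as the paper: the contact terms of the Schottky dissipation bound $(a-b)(\mathcal{F}_\alpha(a)-\mathcal{F}_\alpha(b))$ face by face, \Cref{lem:bound-boundary-density} is applied with exactly the substitution $a = z_\alpha(\varphi_{\alpha,\sigma}^m - \psi_\sigma^m)$, $b = z_\alpha(\varphi_\sigma^D - \psi_\sigma^D)$ to anchor one bounded value (the Dirichlet data playing this role in the ohmic case), and the bound is then propagated through the mesh via the coercive bulk face dissipation as in \cite{Abdel2023Existence}. One small inaccuracy: the relative entropy $H_\alpha(n_{\alpha,K}^m, n_{\alpha,K}^D)$ stays finite as $n_{\alpha,K}^m \to 0$, so the entropy bound cannot supply the positive lower bound on the densities you invoke — but this is harmless, since the lower bound actually follows from the mechanism you describe in the next sentence (bounded $\varphi_{\alpha}$ and $\psi$ at the previously treated cell or face, fed through the state equation), with the entropy entering only via \Cref{lem.bounds.psi}.
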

\begin{proof}
    For ohmic boundary conditions \eqref{eq:memristor-Schottky-BC-dimless}, the proof follows a similar approach as in \cite{Abdel2023Existence}[Lemma 5.4].
    The process involves reformulating the dissipation functions, introducing a coercive face dissipation functional, and identifying at least one control volume with a bounded quasi Fermi potential.
    This boundedness is guaranteed by \Cref{lem.bounds.psi} and the Dirichlet boundary conditions \eqref{eq:memristor-Dirichlet-BC-dimless}.
    A similar approach is taken for Schottky boundary conditions \eqref{eq:memristor-Dirichlet-BC-dimless}.
    However, in this case, the boundedness of at least one quasi Fermi potential is established through \Cref{lem.bounds.psi} and \Cref{lem:bound-boundary-density}.
    Specifically, we use \Cref{lem:bound-boundary-density} with $a = z_\alpha (\varphi_{\alpha, \sigma}^m - \psi_\sigma^m)$ and $b = z_\alpha (\varphi_{\alpha, \sigma}^D - \psi_\sigma^D)$.
\end{proof}


\begin{lemma}(Bound for vacancy quasi Fermi potential; from \cite{Abdel2023Existence}[Lemma 5.3])\label{lem.bounds.anions}
    Assume that there exists $M_{D}>0$ such that ${\mathbb D}_{\mathcal{T},\ions}(\mathbf{X})\leq M_{D}$ and that there also exists ${\bar n}\in (0,1)$ such that
    \begin{align}  \label{eq:lem.mass-conserv-a}
        \frac{1}{| \mathbf{\Omega}|}\sum_{K\in\mathcal{T}_{ \ions }}m_K n_{\ions, K}={\bar n}.
    \end{align}
    Then, there exists some $ M_B >0$ depending on $M_{D}$, ${\bar n}$ and $\mathcal{T}$ such that
    \begin{equation}\label{bounds.phia}
        - M_B \leq \varphi_{\ions,K}\leq M_B, \quad \forall K\in\mathcal{T}_{\ions}.
    \end{equation}
    \QEDB
\end{lemma}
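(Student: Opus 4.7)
The argument uses three facts: the Fermi--Dirac statistics of order $-1$ forces $n_{\ions,K}\in(0,1)$ for all $K$; the mass conservation constraint pins the spatial mean to $\bar n\in(0,1)$; and the dissipation bound controls the density-weighted discrete gradient of $\boldsymbol{\varphi}_\ions$. The plan is to first establish a pointwise bound at a carefully chosen pivot cell and then propagate it to every other cell along a chain of faces, with the main obstacle being the potential degeneracy of the face weights $\overline n_{\ions,\sigma}$.

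For the pivot step, the cell $K_\star\in\mathcal T$ of maximum density satisfies $n_{\ions,K_\star}\ge \bar n$, while the cell $K^\star$ of minimum density satisfies $n_{\ions,K^\star}\le \bar n$ (both pigeonhole inequalities follow from the averaging constraint together with $n_{\ions,K}\in(0,1)$). Applying the discrete state equation \eqref{eq:discrete-state-eq-m} and using the explicit form $\mathcal F_\ions^{-1}(x)=\log(x/(1-x))$ yields $z_\ions(\varphi_{\ions,K_\star}-\psi_{K_\star})\ge \log(\bar n/(1-\bar n))$ and the symmetric reverse inequality at $K^\star$. Combined with the $L^\infty$ bound on $\psi$ from \Cref{lem.bounds.psi} (available at this stage of the proof of \Cref{thm.exresult-memristor}; its dependencies are absorbed into the mesh-dependent constant $M_B$), these give two-sided pointwise estimates on $\varphi_\ions$ at the two pivot cells.

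For propagation, any cell $K\in\mathcal T$ is joined to $K_\star$ by a chain of at most $N:=\mathrm{Card}(\mathcal T)$ neighbouring faces $\sigma_1,\dots,\sigma_p$ (using the connectedness of the mesh and the orthogonality property of admissible FV meshes). A telescoping sum combined with Cauchy--Schwarz yields
$$
\bigl|\varphi_{\ions,K}-\varphi_{\ions,K_\star}\bigr|^2 \le N\sum_{i=1}^p (D_{\sigma_i}\boldsymbol{\varphi}_\ions)^2,
$$
so one must extract the \emph{unweighted} sum on the right from the weighted dissipation bound $\sum_\sigma \tau_\sigma \overline n_{\ions,\sigma}(D_\sigma\boldsymbol{\varphi}_\ions)^2\le 2M_D/z_\ions^2$. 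The hard part is exactly this step, because $\overline n_{\ions,\sigma}\in(0,1)$ may vanish when an adjacent density approaches $0$ or $1$, ruling out a direct discrete Poincar\'e estimate. I would close the argument by an inductive sweep along the chain: assuming the tentative bound $|\varphi_{\ions,K_{i-1}}|\le M_{i-1}$, the state equation together with the bound on $\psi$ yields a strict positive separation $n_{\ions,K_{i-1}}\in[\delta(M_{i-1}),1-\delta(M_{i-1})]$; this lower-bounds $\overline n_{\ions,\sigma_i}$ by a function of $\delta(M_{i-1})$ and allows one to extract $|D_{\sigma_i}\boldsymbol{\varphi}_\ions|$ from the dissipation bound, producing the next cell bound $M_i$. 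Since $N$ is finite, the iteration terminates at a uniform bound $M_B$ depending only on $M_D$, $\bar n$, and mesh-dependent quantities, which establishes both inequalities in \eqref{bounds.phia}.
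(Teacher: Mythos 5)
First, note that the paper does not actually prove this lemma: the statement is closed with a \QEDB{} and deferred to \cite{Abdel2023Existence}[Lemma 5.3], so your proposal can only be judged on its own merits and against the standard argument in that reference. Your overall strategy --- a pivot cell obtained from the mass constraint, followed by propagation along a chain of faces using the dissipation bound --- is indeed the right skeleton. But as written the argument has two genuine gaps. The first is the claim that the pivot step yields ``two-sided pointwise estimates on $\varphi_\ions$ at the two pivot cells.'' It does not: at the maximum-density cell $K_\star$ you only know $n_{\ions,K_\star}\geq\bar n$, so (for $z_\ions>0$) you obtain only a \emph{lower} bound on $\varphi_{\ions,K_\star}$, while $n_{\ions,K_\star}$ may be arbitrarily close to $1$ and $\varphi_{\ions,K_\star}$ arbitrarily large; symmetrically, $K^\star$ gives only an upper bound. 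Consequently the base case of your induction, which requires $|\varphi_{\ions,K_0}|\leq M_0$ in order to conclude $n_{\ions,K_0}\in[\delta,1-\delta]$, is not established at either pivot cell, and the sweep cannot start. The second gap is in the induction step itself: you lower-bound the face weight $\overline{n}_{\ions,\sigma_i}$ using a nondegeneracy bound on the density in only \emph{one} of the two adjacent cells. Whether this is legitimate depends entirely on what mean $\overline{n}_{\ions,\sigma}$ denotes (it is never defined in this paper and is inherited from the reference); if it is comparable to $\min(n_{\ions,K},n_{\ions,L})$, as is typical for the lower bound extracted from the excess-chemical-potential flux, then controlling one endpoint does not prevent the weight from vanishing, and the increment $|D_{\sigma_i}\boldsymbol{\varphi}_\ions|$ cannot be extracted.

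The standard repair, which is what the cited proof does in spirit, is to run two \emph{separate one-sided} propagations and to exploit the fact that the degeneracy of the weight is correlated with the sign of the blow-up of the potential. For the upper bound: every cell with $n_{\ions,K}\leq\bar n$ satisfies $\varphi_{\ions,K}\leq \|\boldsymbol{\psi}\|_\infty + z_\ions^{-1}\mathcal F_\ions^{-1}(\bar n)$ directly from the state equation, so only cells with $n_{\ions,K}>\bar n$ need the chain argument; along the tail of a path joining such a cell to $K^\star$ one can arrange that all relevant densities exceed $\bar n$, which bounds the face weights below by a quantity depending only on $\bar n$ and lets Cauchy--Schwarz close the estimate. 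The lower bound is symmetric, starting from $K_\star$. Two further points: (i) you do need the $L^\infty$ bound on $\boldsymbol{\psi}$ from \Cref{lem.bounds.psi} (without some anchor for $\psi$ the conclusion is false --- shift $\varphi_\ions$ and $\psi$ by the same constant and nothing in the two stated hypotheses changes), so the resulting constant in fact also depends on $M_E$ and $\psi^D$; this is an imprecision in the lemma's dependency list rather than an error of yours, but it should be stated as an additional dependency rather than ``absorbed into the mesh-dependent constant.'' (ii) The argument should be phrased via the general hypothesis \eqref{hyp:statistics-a} (only $\mathcal F_\ions^{-1}(\bar n)$ finite for $\bar n\in(0,1)$ is needed) rather than the explicit formula $\log(x/(1-x))$.
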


\subsection{Existence of potentials}

\begin{lemma}(Existence of electric potential; from \cite{Abdel2023Existence}[Lemma 5.1])\label{lem:discrete-poisson}
    Let $\mathbf{X}$ denote the vector containing the unknown quasi Fermi potentials as defined in \eqref{def.X}.
    Suppose that $\mathbf{X}$ is given.
    Then, there exists a unique solution $\boldsymbol{\psi}(\mathbf{X})$ to the discrete nonlinear Poisson equation \eqref{eq:model-poisson-discrete-memristor}.
    Furthermore, the mapping $\mathbf{X} \mapsto \boldsymbol{\psi}(\mathbf{X})$ is continuous.
    \QEDB
\end{lemma}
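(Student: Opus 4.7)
The plan is to recognise the discrete nonlinear Poisson equation \eqref{eq:model-poisson-discrete-memristor} as the Euler--Lagrange condition of a strictly convex, coercive functional on $\mathbb{R}^{\mathrm{Card}(\mathcal{T})}$; then existence and uniqueness of $\boldsymbol{\psi}(\mathbf{X})$ follow from direct minimisation, and continuity of $\mathbf{X}\mapsto\boldsymbol{\psi}(\mathbf{X})$ from the implicit function theorem applied to the gradient map.

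Concretely, with $\mathbf{X}$ fixed, let $\Psi_\alpha$ denote a primitive of $\mathcal{F}_\alpha$ for $\alpha\in\{\electrons,\holes,\ions\}$, and define
\begin{equation*}
    \mathcal{J}_\mathcal{T}(\boldsymbol{\psi})
    = \frac{\lambda^2}{2}\sum_{\sigma\in\mathcal{E}}\tau_\sigma (D_\sigma\boldsymbol{\psi})^2
    + \sum_{K\in\mathcal{T}} m_K\! \sum_{\alpha} c_\alpha\,\Psi_\alpha\bigl(z_\alpha(\varphi_{\alpha,K}-\psi_K)\bigr)
    - \sum_{K\in\mathcal{T}} m_K\,\delta_\electrons\delta_\holes z_C C_K\,\psi_K,
\end{equation*}
with weights $c_\electrons=\delta_\electrons$, $c_\holes=\delta_\electrons\delta_\holes$, $c_\ions=1$. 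The Dirichlet data on $\mathcal{E}^C$ are encoded in $D_\sigma$ through \eqref{eq:difference-operator}, \eqref{eq:boundary-ohmic}, \eqref{eq:Schottky-psi-sigma}. A direct differentiation, using the discrete state equation \eqref{eq:discrete-state-eq-m} to identify $\mathcal{F}_\alpha(z_\alpha(\varphi_{\alpha,K}-\psi_K))=n_{\alpha,K}$, shows that $\partial_{\psi_K}\mathcal{J}_\mathcal{T}=0$ is exactly \eqref{eq:model-poisson-discrete-memristor} at cell $K$. Thus minimisers of $\mathcal{J}_\mathcal{T}$ coincide with discrete Poisson solutions.

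Next I would verify that $\mathcal{J}_\mathcal{T}$ is strictly convex and coercive. For the quadratic term, the Hessian evaluated on a direction $\boldsymbol{\phi}$ gives $\lambda^2\sum_\sigma\tau_\sigma(D_\sigma\boldsymbol{\phi})^2$; since $\mathcal{E}^C\neq\emptyset$ forces $\phi_\sigma=0$ on contact edges, a standard discrete Poincaré inequality on the (connected) admissible mesh implies this form is positive definite. For each nonlinear piece, the Hessian is diagonal with $K$-th entry $m_K\,c_\alpha z_\alpha^2\mathcal{F}_\alpha'(z_\alpha(\varphi_{\alpha,K}-\psi_K))>0$ by \eqref{hyp:statistics-n-p}--\eqref{hyp:statistics-a}, so the full Hessian is symmetric positive definite and $\mathcal{J}_\mathcal{T}$ is strictly convex. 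Coercivity follows because the quadratic term grows like $\|\boldsymbol{\psi}\|^2$ thanks to the same Poincaré inequality, while $\Psi_\alpha$ is bounded below (it is monotone as an antiderivative of a positive function, and one can shift by a constant) and the doping contribution is linear in $\boldsymbol{\psi}$. Classical finite-dimensional calculus then yields a unique minimiser, hence the unique solution $\boldsymbol{\psi}(\mathbf{X})$.

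Finally, continuity is immediate from the implicit function theorem applied to $G(\boldsymbol{\psi},\mathbf{X}):=\nabla_{\boldsymbol{\psi}}\mathcal{J}_\mathcal{T}(\boldsymbol{\psi};\mathbf{X})$, which is $C^1$ in both arguments since $\mathcal{F}_\alpha\in C^1$; the partial Jacobian $\partial_{\boldsymbol{\psi}}G$ equals the Hessian of $\mathcal{J}_\mathcal{T}$, which we have just shown is nonsingular, so $\boldsymbol{\psi}(\mathbf{X})$ is in fact $C^1$ with respect to $\mathbf{X}$. The main technical subtlety I anticipate is the coercivity argument: the primitives $\Psi_\electrons,\Psi_\holes$ and $\Psi_\ions$ have very different growth behaviours (the Fermi--Dirac-$1/2$ integral is unbounded while the Fermi--Dirac-$(-1)$ integral saturates), but because each appears with a positive coefficient $c_\alpha z_\alpha^2\mathcal{F}_\alpha'$ in the Hessian, these terms can only reinforce convexity and never jeopardise coercivity, reducing the matter to the routine fact that the discrete Poincaré inequality absorbs the linear doping contribution.
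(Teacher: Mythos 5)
Your proposal is correct and follows essentially the same route as the proof the paper relies on (via \cite{Abdel2023Existence}, Lemma 5.1): the discrete nonlinear Poisson equation is identified as the gradient of a strictly convex, coercive functional whose nonlinear part is built from primitives of the statistics functions, giving existence and uniqueness by minimisation, with continuity of $\mathbf{X}\mapsto\boldsymbol{\psi}(\mathbf{X})$ from the invertibility of the (positive definite) Hessian. The sign bookkeeping in your Euler--Lagrange identification and the lower bound on $\Psi_\alpha$ (finite limit at $-\infty$ since $\mathcal{F}_\alpha(\eta)\leq e^\eta$) both check out.
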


Lastly, the following corollary of Brouwer's fixed point theorem is needed to establish the existence of discrete quasi Fermi potentials.

\begin{lemma}(From \cite[Section 9.1]{Evans.2010})\label{lem.Evans}
    Let $N\in\mathbb{N} $ and $\mathbf{P}: \mathbb{R}^N\to \mathbb{R}^N$ be a continuous vector field. Assume that there exists $M_B > 0$ such that $\mathbf{P}(\mathbf{X})\cdot \mathbf{X} \geq 0$, if $\Vert \mathbf{X}\Vert = M_B $. Then, there exists $\mathbf{X}^{\ast}\in\mathbb{R}^N$ such that $\mathbf{P}(\mathbf{X}^{\ast})=\mathbf{0}$ and $\Vert \mathbf{X}^{\ast}\Vert \leq M_B$. \QEDB
\end{lemma}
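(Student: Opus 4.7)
The plan is to argue by contradiction and reduce the statement to Brouwer's fixed point theorem on the closed ball $\overline{B} = \{\mathbf{X}\in\mathbb{R}^N : \|\mathbf{X}\| \leq M_B\}$. Suppose for contradiction that $\mathbf{P}(\mathbf{X}) \neq \mathbf{0}$ for every $\mathbf{X} \in \overline{B}$. Then $\|\mathbf{P}(\mathbf{X})\| > 0$ throughout $\overline{B}$, so the map
\begin{equation*}
    F : \overline{B} \to \overline{B}, \qquad F(\mathbf{X}) = -M_B\,\frac{\mathbf{P}(\mathbf{X})}{\|\mathbf{P}(\mathbf{X})\|}
\end{equation*}
is well-defined and continuous, since $\mathbf{P}$ is continuous and its norm never vanishes on $\overline{B}$. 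Note that $F$ actually sends $\overline{B}$ into the sphere $\partial B = \{\mathbf{X} : \|\mathbf{X}\| = M_B\} \subset \overline{B}$.

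Since $\overline{B}$ is a nonempty, compact, convex subset of $\mathbb{R}^N$ and $F$ is continuous with $F(\overline{B}) \subset \overline{B}$, Brouwer's fixed point theorem yields some $\mathbf{X}^{\ast} \in \overline{B}$ with $F(\mathbf{X}^{\ast}) = \mathbf{X}^{\ast}$. Because $\|F(\mathbf{X}^{\ast})\| = M_B$, necessarily $\|\mathbf{X}^{\ast}\| = M_B$, and so the hypothesis applies at $\mathbf{X}^{\ast}$, giving $\mathbf{P}(\mathbf{X}^{\ast}) \cdot \mathbf{X}^{\ast} \geq 0$.

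On the other hand, the fixed-point identity forces
\begin{equation*}
    \mathbf{P}(\mathbf{X}^{\ast}) \cdot \mathbf{X}^{\ast}
    = \mathbf{P}(\mathbf{X}^{\ast}) \cdot F(\mathbf{X}^{\ast})
    = -\,\frac{M_B}{\|\mathbf{P}(\mathbf{X}^{\ast})\|}\,\|\mathbf{P}(\mathbf{X}^{\ast})\|^{2}
    = -M_B\,\|\mathbf{P}(\mathbf{X}^{\ast})\| < 0,
\end{equation*}
contradicting the previous inequality. Hence $\mathbf{P}$ must vanish at some point of $\overline{B}$, and the resulting zero automatically satisfies $\|\mathbf{X}^{\ast}\| \leq M_B$.

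There is no serious obstacle here; the only care needed is to verify that $F$ is genuinely continuous on all of $\overline{B}$, which is precisely what the no-zero assumption buys us. The argument uses Brouwer's fixed point theorem as the black box and extracts exactly one piece of information from the sign condition: that the outward pointing of $F$ on $\partial B$ cannot be reconciled with $\mathbf{P}\cdot\mathbf{X} \geq 0$ at norm $M_B$.
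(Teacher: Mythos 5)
Your argument is correct and is precisely the classical proof of this corollary of Brouwer's fixed point theorem given in the cited source (Evans, Section 9.1); the paper itself omits the proof and simply refers to that reference. No issues: the map $F$ is well defined under the no-zero assumption, its fixed point necessarily lies on the sphere, and the sign computation $\mathbf{P}(\mathbf{X}^{\ast})\cdot\mathbf{X}^{\ast}=-M_B\|\mathbf{P}(\mathbf{X}^{\ast})\|<0$ cleanly contradicts the hypothesis.
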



\bibliographystyle{elsarticle-num}
\bibliography{literature}

\end{document}